\documentclass[11pt]{amsart}
\usepackage[left=1.25in,right=1.25in]{geometry}
\usepackage{amsthm}
\usepackage{amsmath}
\usepackage{mathtools}
\usepackage{amssymb}
\usepackage{hyperref}
\usepackage{tikz}
\usetikzlibrary{decorations.markings}
\usepackage{enumitem}
\usepackage{amsfonts}
\usepackage{hyperref}
\usepackage{todonotes}
\usepackage[utf8]{inputenc}

\newcommand\precdot{\mathrel{\ooalign{$\prec$\cr
  \hidewidth\raise0.001ex\hbox{$\cdot\mkern0.6mu$}\cr}}}

\newtheorem{theorem}{Theorem}[section]
\newtheorem{prop}[theorem]{Proposition}
\newtheorem{conj}[theorem]{Conjecture}
\newtheorem{lemma}[theorem]{Lemma}
\newtheorem{cor}[theorem]{Corollary}

\theoremstyle{definition}

\newtheorem{defin}[theorem]{Definition}

\theoremstyle{remark}
\newtheorem*{remark}{Remark}

\DeclareMathOperator{\supp}{Supp}
\DeclareMathOperator{\aut}{Aut}

\newcommand{\mc}{\mathcal}

\newcommand{\Inv}{\mathrm{Inv}}

\title{On automorphisms of undirected Bruhat graphs}

\author{Christian Gaetz}
\thanks{C.G. is supported by a National Science Foundation Postdoctoral Research Fellowship under grant DMS-2103121.}
\address{Department of Mathematics, Harvard University, Cambridge, MA.}
\email{\href{mailto:crgaetz@gmail.com}{{\tt crgaetz@gmail.com}}}

\author{Yibo Gao}
\address{Department of Mathematics, Massachusetts Institute of Technology, Cambridge, MA.}
\email{\href{mailto:gaoyibo@mit.edu}{{\tt gaoyibo@mit.edu}}}

\date{\today}

\begin{document}
\begin{abstract}
The \emph{(directed) Bruhat graph} $\widehat{\Gamma}(u,v)$ has the elements of the Bruhat interval $[u,v]$ as vertices, with directed edges given by multiplication by a reflection. Famously, $\widehat{\Gamma}(e,v)$ is regular if and only if the Schubert variety $X_v$ is smooth, and this condition on $v$ is characterized by pattern avoidance. In this work, we classify when the \emph{undirected} Bruhat graph $\Gamma(e,v)$ is \emph{vertex-transitive}; surprisingly this class of permutations is also characterized by pattern avoidance and sits nicely between the classes of smooth permutations and self-dual permutations. This leads us to a general investigation of automorphisms of $\Gamma(u,v)$ in the course of which we show that \emph{special matchings}, which originally appeared in the theory Kazhdan--Lusztig polynomials, can be characterized as certain $\Gamma(u,v)$-automorphisms which are conjecturally sufficient to generate the orbit of $e$ under $\aut(\Gamma(e,v))$.
\end{abstract}
\maketitle

\section{Introduction}
\label{sec:intro}

The \emph{(directed) Bruhat graph} $\widehat{\Gamma}$ of a Coxeter group $W$ is the directed graph with vertex set $W$ and directed edges $w \to wt$ whenever $\ell(wt)>\ell(w)$; we write $\widehat{\Gamma}(u,v)$ for its restriction to a Bruhat interval $[u,v] \subset W$. These graphs appear ubiquitously in the combinatorics of Coxeter groups and Bruhat order \cite{dyer-bruhat-graph}, the topology of flag, Schubert, and Richardson varieties as the GKM-graph for the natural torus action \cite{GKM, guillemin-holm-zara}, and in the geometry of these varieties and related algebra, for example in the context of Kazhdan--Lusztig polynomials \cite{blundell2021towards, Brenti-combinatorial-formula, davies2021advancing, Dyer-hecke-algebras}.

In all of these contexts, the directions of the edges, and sometimes additional edge labels, are centrally important. In this work, however, we study the associated \emph{undirected} graphs $\Gamma(u,v)$. In particular, from the perspective of the undirected graph, it is very natural to study graph automorphisms (in contrast, the directed Bruhat graph $\widehat{\Gamma}$ has very few automorphisms \cite{Waterhouse}), and these automorphisms end up having close connections to previous work on smooth Schubert varieties \cite{lakshmibai-sandhya, Carrell-smoothness}, self-dual Bruhat intervals \cite{self-dual}, Billey--Postnikov decompositions \cite{billey-postnikov, richmond-slofstra-fiber-bundle}, and special matchings \cite{SM-advances}.

\subsection{Regular, vertex-transitive, and self-dual Bruhat graphs}

The following well-known theorem, combining results of Lakshmibai--Sandhya \cite{lakshmibai-sandhya} and Carrell--Peterson \cite{Carrell-smoothness}, helped establish the fundamentality of both the Bruhat graph and pattern avoidance conditions in the combinatorial and geometric study of Schubert varieties.

\begin{theorem}[Lakshmibai--Sandhya \cite{lakshmibai-sandhya}, Carrell--Peterson \cite{Carrell-smoothness}]
\label{thm:smoothness}
The following are equivalent for a permutation $w$ in the symmetric group $\mathfrak{S}_n$:
\begin{enumerate}
    \item[\normalfont(S1)] the Bruhat graph $\widehat{\Gamma}(w)$ is a regular graph,  
    \item[\normalfont(S2)] the permutation $w$ avoids the patterns $3412$ and $4231$, 
    \item[\normalfont(S3)] the poset $[e,w]$ is rank-symmetric, and
    \item[\normalfont(S4)] the Schubert variety $X_w$ is smooth.
\end{enumerate}
\end{theorem}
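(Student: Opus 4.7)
The plan is to establish the three equivalences $(S4) \Leftrightarrow (S1)$, $(S4) \Leftrightarrow (S3)$, and $(S4) \Leftrightarrow (S2)$, each with a distinct character. The first two follow from standard local-to-global properties of smooth projective varieties, while the third --- the Lakshmibai--Sandhya pattern avoidance criterion --- is the technical heart of the theorem.

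For $(S4) \Leftrightarrow (S1)$, I would invoke the Carrell--Peterson tangent space formula: at a torus fixed point $w \in X_v$, the Zariski tangent space has dimension $\#\{t \in T : wt \le v\}$, where $T$ denotes the set of reflections. This count is exactly the (undirected) degree of $w$ in $\widehat{\Gamma}(e,v)$. Since $X_v$ is irreducible of dimension $\ell(v)$, smoothness forces every such degree to equal $\ell(v)$. Conversely, regularity of $\widehat{\Gamma}(e,v)$ pins down the common degree to $\ell(v)$ (since the degree at $v$ itself always equals $\ell(v)$, by counting inversions), and this makes every tangent space the correct size, yielding smoothness after invoking Deodhar's result that rational and ordinary smoothness coincide in type $A$.

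For $(S4) \Leftrightarrow (S3)$, Poincaré duality on the smooth projective variety $X_v$ forces the Poincaré polynomial $\sum_i q^i \#\{w \le v : \ell(w)=i\}$ to be palindromic, which is literally the rank-symmetry of the interval $[e,v]$. The converse, that palindromicity of this polynomial implies rational smoothness, is the Carrell--Peterson Poincaré polynomial criterion; combined with Deodhar in type $A$ this closes the equivalence.

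The main obstacle is $(S2) \Leftrightarrow (S4)$. For $(S4) \Rightarrow (S2)$, I would argue by contrapositive: assuming $v$ contains a $3412$ or $4231$ pattern in positions $i_1 < i_2 < i_3 < i_4$, I would exhibit an explicit $u \le v$ for which the count $\#\{t : ut \le v\}$ strictly exceeds $\ell(v)$, thereby certifying nonsmoothness through $(S1)$; the combinatorial content is verifying this construction pattern-by-pattern and checking that the Bruhat-order conditions $ut \le v$ are satisfied using the tableau criterion. For $(S2) \Rightarrow (S4)$, I would proceed by induction on $n$ via Billey--Postnikov fiber bundle decompositions: the avoidance of $\{3412,4231\}$ should guarantee, at each inductive step, a BP decomposition presenting $X_v$ as a fiber bundle over a Schubert variety in a smaller flag variety with Grassmannian fiber, reducing smoothness of $X_v$ to smoothness of two smaller pattern-avoiding Schubert varieties, with concrete small permutations serving as the base case. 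The technical crux in both directions is turning the local combinatorial data of a length-$4$ pattern into a global geometric or graph-theoretic obstruction.
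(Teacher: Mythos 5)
This statement is Theorem~\ref{thm:smoothness}, which the paper presents purely as background: it is attributed to Lakshmibai--Sandhya and Carrell--Peterson and is cited, not proved, so there is no proof in the paper to compare against. What I can do is assess your outline on its own terms.

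Your plan is broadly consistent with how the result is established in the literature, but it is a roadmap rather than a proof: every substantive step is phrased as ``I would argue'' or ``I would proceed,'' and the entire combinatorial core --- the equivalence with pattern avoidance --- is left as two unexecuted programs (a pattern-by-pattern tangent-space count for one direction, and an inductive BP-decomposition argument for the other). Those are plausible routes (the second is closer in spirit to Gasharov's and Oh--Postnikov--Yoo's treatments than to the original Lakshmibai--Sandhya argument), but none of the work that makes them nontrivial is actually carried out, so this cannot stand as a proof.

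There is also a conceptual wobble in the (S1)$\Leftrightarrow$(S4) paragraph. The dimension formula $\dim T_w X_v = \#\{t \in T : wt \le v\}$ is a type-$A$ tangent-space theorem (due to Lakshmibai/Polo; it fails in general type because a single $T$-weight can occur with multiplicity). If you invoke that formula, you get honest smoothness directly from regularity of $\widehat{\Gamma}(e,v)$ plus Deodhar's inequality $\#\{t : wt \le v\} \ge \ell(v)$, and the appeal to Deodhar's ``rational smoothness $=$ smoothness in type $A$'' is redundant. Conversely, the Carrell--Peterson theorem is the statement that regularity of the Bruhat graph is equivalent to rational smoothness (equivalently, palindromicity of the Poincar\'e polynomial) in all types; it does not itself compute tangent spaces. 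As written you are mixing the two criteria; either one suffices in type $A$, but the exposition should commit to one.
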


In light of (S3), it is natural to ask whether $[e,w]$ is in fact self-dual as a poset when $X_w$ is smooth. This turns out to not always be the case, but the smaller class of self-dual intervals also admits a nice characterization by pattern avoidance:

\begin{theorem}[G.--G. \cite{self-dual}]
\label{thm:self-dual}
The following are equivalent for a permutation $w \in \mathfrak{S}_n$:
\begin{enumerate}
    \item[\normalfont(SD1)] the Bruhat interval $[e,w]$ is self-dual as a poset, and
    \item[\normalfont(SD2)] the permutation $w$ avoids the patterns $3412$ and $4231$ as well as $34521, 54123, 45321,$ and $54312$.
\end{enumerate}
\end{theorem}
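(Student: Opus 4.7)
The plan is to handle the two implications separately: necessity by local obstructions combined with pattern-preservation, and sufficiency by induction using Billey--Postnikov decompositions.

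\emph{Necessity (SD1)$\Rightarrow$(SD2).} Any self-dual poset is rank-symmetric, so Theorem~\ref{thm:smoothness} already forces $w$ to avoid $3412$ and $4231$. For each additional pattern $\sigma \in \{34521, 54123, 45321, 54312\}$ I would first compute $[e,\sigma] \subset \mathfrak{S}_5$ directly and exhibit a local obstruction to self-duality --- for instance, a pair of elements at the same rank whose vectors of up- and down-covering degrees cannot be matched by any rank-reversing involution. Then, for general $w$ containing $\sigma$, one needs a pattern-preservation argument transferring the obstruction into $[e,w]$: this might proceed via projection to a parabolic subgroup adapted to the positions of the pattern, or by locating a structurally embedded copy of $[e,\sigma]$ as a subquotient of $[e,w]$. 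This transfer step is delicate but is of a type that has been used successfully in proofs of analogous pattern-avoidance characterizations.

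\emph{Sufficiency (SD2)$\Rightarrow$(SD1).} Assume $w$ avoids all six patterns; in particular $w$ is smooth, so by Richmond--Slofstra it admits an iterated Billey--Postnikov decomposition whose stages are smooth Grassmannian Schubert varieties. The key claim is that each stage induces a poset isomorphism of the form $[e,w] \cong [e, u^P] \times [e, v]$, expressing the Bruhat interval as a Cartesian product of intervals for the base and fiber of the bundle. Since products of self-dual posets are self-dual, induction on $\ell(w)$ reduces the theorem to the Grassmannian base case: one must classify exactly which Grassmannian permutations $u$ yield a self-dual $[e,u]$ in the full symmetric group. The expected answer is a restricted family of ``rectangle-like'' partition shapes whose self-duality is witnessed by an explicit combinatorial involution such as complementation, and the four additional forbidden patterns should correspond precisely to the minimal ways in which a non-admissible shape can appear at some stage of the iterated decomposition.

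\emph{Main obstacle.} The principal challenge is the Grassmannian base case together with the matching between shapes and the four extra patterns. A genuine subtlety is that the Bruhat interval of a Grassmannian permutation in $\mathfrak{S}_n$ can be substantially larger than the corresponding interval in the parabolic quotient $W^P$, and the self-duality properties of the two posets need not coincide, so the classification of admissible shapes cannot simply be read off from the self-dual sub-intervals of Young's lattice. Establishing the precise list of four extra patterns --- neither too long nor too short --- then requires a careful inductive argument tracking how pattern occurrences propagate through the iterated BP decomposition.
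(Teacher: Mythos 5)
This theorem is cited from the authors' earlier paper \cite{self-dual} and is not proved in the present document; what one can infer about the actual argument comes from the references to ``polished elements'' and the closely parallel proof of Theorem~\ref{thm:vertex-transitive} given in Section~\ref{sec:vertex-transitive}. There the intermediate condition is ``$w$ is a product of blocks $w_0(J_i)w_0(J_i\cap J_i')w_0(J_i')$'', proved equivalent to the pattern-avoidance condition by corner-region analysis of the permutation matrix (types n, l0, l1, r0, r1, one-step reductions), and proved to imply the desired structural property (self-duality, resp.\ vertex-transitivity) by an explicit construction rather than a product decomposition. Your proposal diverges from that route, and the divergence exposes a genuine gap in the sufficiency direction.

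The decisive problem is your key claim that ``each stage induces a poset isomorphism of the form $[e,w]\cong[e,u^P]\times[e,v]$.'' This is false. Proposition~\ref{prop:BP-equivalences} gives only a \emph{bijection} $\bigl([e,w^J]\cap W^J\bigr)\times[e,w_J]\to[e,w]$, and this bijection is not a poset isomorphism: Bruhat intervals are genuinely Cartesian products only in the disjoint-support situation of Proposition~\ref{prop:disjoint-support}, not for a general BP-decomposition. (This is precisely why both \cite{self-dual} and the present paper need the finer notion of (almost-)polished elements with control on $J\cap J'$, rather than bare BP-decompositions.) Consequently ``products of self-dual posets are self-dual'' cannot be invoked, and your induction has no base step to reduce to; the entire sufficiency argument collapses. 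In \cite{self-dual} the self-duality of a polished $w=w_0(J)w_0(J\cap J')w_0(J')$ is shown by exhibiting an explicit anti-automorphism of $[e,w]$ (built out of $w_0(J)$, $w_0(J')$ and inversion), not by a product reduction; this is the ingredient missing from your outline. For the necessity direction your plan is closer in spirit to what is done, but ``a pattern-preservation argument transferring the obstruction'' is not a known black box for Bruhat intervals: containing $\sigma$ does not in general embed $[e,\sigma]$ order-theoretically into $[e,w]$. The actual argument (compare the proof of (VT1)$\Rightarrow$(VT2) via Lemma~\ref{lem:preserve-triangle}) examines the neighborhood of $e$ and $w$ inside $\Gamma(e,w)$ and derives a contradiction directly from the pattern occurrence, rather than transferring a small counterexample. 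You should either supply such a direct argument or justify why the obstruction genuinely propagates; as written, both implications rest on unproven transfer principles.
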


In our first main theorem here, we characterize by pattern avoidance those permutations $w$ such that $\Gamma(e,w)$ is \emph{vertex-transitive}; this characterization implies that this class of permutations sits nicely between the classes of self-dual permutations (Theorem~\ref{thm:self-dual}) and smooth permutations (Theorem~\ref{thm:smoothness}).

\begin{theorem}
\label{thm:vertex-transitive}
The following are equivalent for a permutation $w \in \mathfrak{S}_n$:
\begin{enumerate}
    \item[\normalfont(VT1)] the undirected Bruhat graph $\Gamma(e,w)$ is a vertex-transitive graph,
    \item[\normalfont(VT2)] the permutation $w$ avoids the patterns $3412$ and $4231$ as well as $34521$ and $54123$.
\end{enumerate}
\end{theorem}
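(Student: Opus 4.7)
The proof naturally splits into the two directions. For (VT1) $\Rightarrow$ (VT2), every vertex-transitive graph is regular, so Theorem~\ref{thm:smoothness} immediately forces $w$ to avoid $3412$ and $4231$. To rule out the patterns $34521$ and $54123$, I would first compute the two minimal instances $\Gamma(e, 34521)$ and $\Gamma(e, 54123)$ in $\mathfrak{S}_5$ and exhibit a local graph invariant --- for example, the number of triangles incident to each vertex, or the degree multiset within the second neighborhood --- which separates the identity from some other vertex; this handles the $\mathfrak{S}_5$ base case. To extend to a general $w$ containing one of these patterns, I would exploit the iterated-fiber-bundle / BP structure afforded by $3412$- and $4231$-avoidance to reduce the question to a convex sub-interval of $[e,w]$ on which the Bruhat graph is controlled by the offending pattern, and verify that the chosen local invariant survives this restriction.

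For (VT2) $\Rightarrow$ (VT1), the strategy is induction on $\ell(w)$ using a Billey--Postnikov decomposition. Since $w$ is smooth, Richmond--Slofstra \cite{richmond-slofstra-fiber-bundle} provides a factorization $w = u \cdot v$ with $u \in W^I$ and $v \in W_I$ such that $[e,w]$ is realized as an iterated fiber bundle over $[e,u]$ with Grassmannian fibers. The first step is to check that avoidance of $34521$ and $54123$ is inherited by both factors of any such BP decomposition. The second step is to assemble automorphisms of the factors into automorphisms of $\Gamma(e,w)$: if $\Gamma(e,u)$ and the fiber Bruhat graph are both vertex-transitive, then enough graph automorphisms should extend along the bundle to act transitively on $\Gamma(e,w)$. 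The base case treats the Grassmannian-like pieces directly, where transitivity is supplied by the many special matchings present; the paper's framework, which realizes special matchings as $\Gamma$-automorphisms, should be set up to deliver exactly this transitivity.

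The principal obstacle is expected to be the embedding step in the forward direction: the Bruhat graph does not naturally restrict along pattern containment, so lifting a local invariant from $[e, 34521]$ or $[e, 54123]$ to a general $[e,w]$ requires a choice of invariant that is robust under the additional edges contributed by letters outside the pattern instance. A useful sanity check here is that the two patterns $45321$ and $54312$ --- which obstruct self-duality by Theorem~\ref{thm:self-dual} but must \emph{not} obstruct vertex-transitivity --- should give vertex-transitive graphs, so the distinguishing invariant must be strictly coarser than any order-sensitive quantity; this argues for a purely graph-theoretic count such as triangles through a vertex rather than anything depending on rank or the directed edge structure.
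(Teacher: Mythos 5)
Your forward direction is pointing at the right invariant --- the paper's key Lemma~\ref{lem:preserve-triangle} is precisely a triangle-preservation statement for automorphisms of $\Gamma(w)$ --- but the plan to prove it has a real gap that you yourself flag. You propose verifying the $\mathfrak{S}_5$ base cases $\Gamma(e,34521)$ and $\Gamma(e,54123)$ and then lifting the invariant along pattern containment; but there is no mechanism in place for that lift, because the Bruhat graph of $[e,w]$ is not built out of the Bruhat graph of a pattern instance in any controlled way, and BP/fiber-bundle structure alone does not realize a pattern occurrence as a convex sub-interval with a predictable induced graph. The paper sidesteps this entirely: it never restricts to a small interval. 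Instead, given a $34521$-occurrence at positions $a_1<\cdots<a_5$ in $w$, it considers the transpositions $wt_{a_ia_j}$ directly inside $\Gamma(w)$, uses the fact that $\varphi$ carries triangles in $N_1(w)$ to triangles in $N_1(e)$ (and back), and runs a case analysis on the indices to force $wt_{a_1a_2}$ to lie in $[e,w]$, contradicting $w(a_1)<w(a_2)$. That argument is intrinsically about the ambient graph, not about a restricted copy of $\Gamma(e,34521)$.

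Your backward direction is in the right spirit but also diverges from what actually closes the argument. You propose an inductive lift of automorphisms along a BP fiber bundle, invoking special matchings as the source of transitivity. The paper does run an inductive BP-flavored reduction (the ``one-step reduction''), but its purpose is only to establish a normal form: every $w$ in the class can be written (up to disjoint factors) as $w_0(J)\,w_0(J\cap J')\,w_0(J')$, i.e.\ $w$ is \emph{almost-polished}. Once that form is known, transitivity is proved directly and elementarily: since $D_L(w)\supseteq J$ and $D_R(w)\supseteq J'$, Proposition~\ref{prop:right-left-multiplication-give-autos} gives automorphisms of $\Gamma(w)$ from left multiplication by $W_J$ and right multiplication by $W_{J'}$, and for any $u\le w$ one has $u=u^{J'}u_{J'}$ with $u^{J'}\in W_J$ and $u_{J'}\in W_{J'}$, so $u$ is reached from $e$ by composing these. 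No lifting of automorphisms along a bundle is required, and no special matchings beyond the trivial left/right ones are invoked here (middle multiplication and general special matchings enter later, for the orbit conjecture, not for vertex transitivity). Your proposed ``assemble automorphisms along the bundle'' step is the part you would have to actually prove, and it is far from automatic: a fiber bundle of posets does not in general let you glue base automorphisms with fiber automorphisms into total-space graph automorphisms. The paper's normal form is precisely what makes the gluing unnecessary.

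One small correction to the framing: you treat the avoidance-inheritance step (that the pattern conditions pass to the BP factors) as if it were the only thing to check before induction carries the day; the paper's analogous Lemma~\ref{lem:one-step-reduction-avoid-patterns} is indeed needed, but it serves the normal-form derivation, not an automorphism-lifting induction. Your sanity check about $45321$ and $54312$ is correct and consistent with the paper.
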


Since vertex-transitive graphs are necessarily regular, it is clear that the permutations from Theorem~\ref{thm:vertex-transitive} are a subset of those from Theorem~\ref{thm:smoothness}, and this is borne out by comparing conditions (S2) and (VT2). It is not at all conceptually clear, however, why the self-dual permutations of Theorem~\ref{thm:self-dual} should in turn be a subset of those from Theorem~\ref{thm:vertex-transitive}, even though this fact is easily seen by comparing conditions (VT2) and (SD2). A conceptual bridge between these two classes of permutations is provided by Conjecture~\ref{conj:orbit-is-interval}.

\begin{conj}
\label{conj:orbit-is-interval}
Let $w \in \mathfrak{S}_n$ and let $\mc{O}=\{\varphi(e) \mid \varphi \in \aut(\Gamma(e,w))\}$ be the orbit of the identity under graph automorphisms of $\Gamma(e,w)$, then
\[
\mc{O}=[e,v]
\]
for some $v \leq w$.
\end{conj}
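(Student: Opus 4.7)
The strategy is to establish two structural properties of $\mc{O}$ which together formally imply $\mc{O} = [e, v]$:
(a) $\mc{O}$ is a Bruhat order ideal in $[e, w]$; and
(b) $\mc{O}$ admits a unique Bruhat-maximum $v$.
Granting both, finiteness forces every element of $\mc{O}$ to lie below some maximal element, and since there is only one, $\mc{O} \subseteq [e, v]$; conversely $v \in \mc{O}$ together with downward closure forces $[e, v] \subseteq \mc{O}$.

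For (a), the natural reduction is to Bruhat covers: if $u \in \mc{O}$ and $u' \lessdot u$ then $u' \in \mc{O}$. Given $\varphi \in \aut(\Gamma(e, w))$ with $\varphi(e) = u$, the edge $\{u, u'\}$ pulls back under $\varphi^{-1}$ to an edge $\{e, s\}$ for some reflection $s \in [e,w]$, but this by itself does not produce an automorphism taking $e$ to $u'$. The principal tool would be the paper's characterization of special matchings as graph automorphisms of $\Gamma(e, w)$: each special matching $M$ is an involutive automorphism with $M(e)$ an atom, and composing $\varphi$ with an appropriate sequence of special matchings is the natural mechanism for descending through $\mc{O}$. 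We would aim to establish the subconjecture noted in the abstract — that special matchings already generate $\mc{O}$ — after which (a) would reduce to the purely combinatorial statement that the set of elements reachable from $e$ by sequences of special matchings is a Bruhat order ideal.

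For (b), the approach is an induction based on Billey--Postnikov decompositions: when $[e, w]$ factors in a manner compatible with the graph structure as some product involving $[e, w']$ and $[e, w'']$, the automorphism group and the orbit of $e$ factor accordingly, and unique maxima in the factors combine into a unique maximum in $\mc{O}$. For Billey--Postnikov-indecomposable $w$, we would hope to show directly that $\Gamma(e, w)$ is already vertex-transitive, applying Theorem~\ref{thm:vertex-transitive} to conclude $\mc{O} = [e, w]$ and $v = w$, which would require relating BP-indecomposability to the pattern-avoidance classes of (VT2).

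The main obstacle is step (a): graph automorphisms of $\Gamma(e, w)$ need not preserve Bruhat length or order (for instance, the automorphism group of $\Gamma(e, w_0) \cong K_{3,3}$ in $\mathfrak{S}_3$ can interchange vertices of distinct Bruhat lengths), so the desired $\psi$ with $\psi(e) = u'$ cannot be produced by any purely local modification of $\varphi$. The deepest step we expect is thus the special-matchings subconjecture, or, in its absence, a sufficiently delicate induction exploiting Billey--Postnikov decompositions together with the pattern-avoidance data from Theorems~\ref{thm:smoothness}--\ref{thm:vertex-transitive}.
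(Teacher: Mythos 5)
This statement is labeled as a \emph{conjecture} in the paper (Conjecture~\ref{conj:orbit-is-interval}), and the paper does not prove it: the authors offer only a refinement (Conjecture~\ref{conj:strengthened-orbit-conjecture}, specifying the conjectural maximum $v(w)$ in terms of $A_R(w)$, $A_L(w)$, and longest elements of descent parabolics) and two partial results, Proposition~\ref{prop:confirm-conj-for-vt} (which confirms the strengthened conjecture when $\Gamma(w)$ is vertex-transitive) and Proposition~\ref{prop:v-is-in-orbit} (which shows $v(w) \in \mc{O}$, i.e.\ one of the two containments). So there is no ``paper's own proof'' to compare against, and your proposal, being a strategy outline with several acknowledged open sub-questions, cannot be regarded as a proof either.

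Beyond the unresolved gaps that you yourself flag, one concrete step in your plan would fail. In part (b) you hope to show that ``for Billey--Postnikov-indecomposable $w$, $\Gamma(e,w)$ is already vertex-transitive.'' This is false: the paper's Theorem~\ref{thm:vertex-transitive} characterizes vertex transitivity by avoidance of $3412$, $4231$, $34521$, $54123$, and there are smooth permutations admitting no nontrivial BP-decomposition that contain $34521$ or $54123$ and hence are not vertex-transitive. The paper's treatment of this point is finer: it introduces \emph{Bruhat irreducibility} (no disjoint-support factorization) and \emph{almost reducibility at $(J,i)$}, and builds $v(w)$ from the almost-reducible data; the claim is then that $\mc{O}=[e,v(w)]$, with the nontrivial direction $\mc{O}\subseteq[e,v(w)]$ left entirely open. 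Your part (a) also rests on the unestablished claim that elements reachable from $e$ by sequences of special matchings form a Bruhat order ideal; the paper shows special matchings are $\Gamma(u,v)$-automorphisms (Theorem~\ref{thm:SM-implies-auto}, one direction of Theorem~\ref{thm:SM-equals-automorphism-classical-type}), but gives no such ideal-generation result. In short, your reduction ``(a) order ideal $+$ (b) unique maximum $\Rightarrow$ $\mc{O}=[e,v]$'' is logically sound, but neither (a) nor (b) is proved here or in the paper, so the conjecture remains open.
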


Indeed, if $[e,w]$ is self-dual, then $w \in \mathcal{O}$, and so if Conjecture~\ref{conj:orbit-is-interval} holds we must have $\mathcal{O}=[e,w]$. That is, $\Gamma(w)$ must be vertex-transitive.

In the course of the proof of Theorem~\ref{thm:vertex-transitive} (Section~\ref{sec:vertex-transitive}) and the refinement of Conjecture~\ref{conj:orbit-is-interval} in Section~\ref{sec:orbits}, we are led to consider certain automorphisms of $\Gamma(u,v)$ arising from perfect matchings on the Hasse diagram of $[u,v]$. That these automorphisms are the same thing as the previously well-studied \emph{special matchings} on $[u,v]$ is the subject of our second main theorem.

\subsection{Special matchings and Bruhat automorphisms}
\emph{Special matchings} (see the definition in Section~\ref{sec:intro-SM}) on Bruhat intervals were introduced \cite{SM-original, SM-advances} because they can be used to define a recurrence for \emph{Kazhdan--Lusztig $R$-polynomials} \cite{kazhdan-lusztig-polynomials} which allows for the resolution of the \emph{Combinatorial Invariance Conjecture} in the case of lower intervals $[e,w]$. These matchings are intended to generalize many of the combinatorial properties of the matching on $W$ induced by multiplication by a simple reflection $s$. Special matchings on Bruhat intervals and related posets have since found several other combinatorial and topological applications and been generalized in several ways \cite{SM-topology, SM-diamonds, SM-zircon}, and special matchings on lower Bruhat intervals have been completely classified \cite{SM-lower-classification}.

In Theorem~\ref{thm:SM-equals-automorphism-classical-type} and Conjecture~\ref{conj:SM-equals-automorphism-general} below we give a new characterization of special matching of Bruhat intervals $[u,v]$ in terms of automorphisms of $\Gamma(u,v)$. This characterization is notable because it expresses the special matching condition, originally formulated as a condition only on Bruhat covers, as a condition on the global structure of the undirected Bruhat graph.

A Coxeter group $W$ is called \emph{right-angled} if every pair of simple generators either commutes or generates an infinite dihedral group.

\begin{theorem}
\label{thm:SM-equals-automorphism-classical-type}
Let $W$ be a right-angled Coxeter group or the symmetric group and let $u \leq v$ be elements of $W$. Then a perfect matching of the Hasse diagram of $[u,v]$ is a special matching if and only if it is an automorphism of $\Gamma(u,v)$. 
\end{theorem}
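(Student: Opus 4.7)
The plan is to prove the two implications separately; both reduce to analyzing how $M$ acts on covers of $[u,v]$, but the converse requires genuine use of the hypothesis on $W$.

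For the forward direction, let $M$ be a special matching of $[u,v]$. Since $M$ is already a bijection of $[u,v]$, it suffices to show that $M$ preserves Bruhat edges. Given an edge $\{x, xt\}$ of $\Gamma(u,v)$ with $k = \ell(xt) - \ell(x) \geq 1$, I would induct on $k$. The base case $k = 1$ is immediate from the definition of special matching: either $M(x) = xt$, in which case $\{M(x), M(xt)\}$ is the same edge, or $M(x) \lessdot M(xt)$, which is a Hasse edge and hence a Bruhat edge. For $k \geq 2$, I would find a saturated chain from $x$ to $xt$ living inside a dihedral reflection subgroup containing $t$, and iteratively apply specialness at each cover, using the inductive hypothesis on the intermediate shorter edges to propagate the Bruhat-edge relation through $M$. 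This direction should go through without the specific hypothesis on $W$, using only standard Coxeter-theoretic facts.

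For the converse, suppose $M$ is a Hasse matching of $[u,v]$ that is simultaneously a graph automorphism of $\Gamma(u,v)$. Pick a cover $x \lessdot y$ with $M(x) \neq y$; the goal is to show $M(x) \lessdot M(y)$. Since $M$ is an automorphism, $\{M(x), M(y)\}$ is a Bruhat edge of $\Gamma(u,v)$. Depending on whether each of $x$ and $y$ is matched upward or downward by $M$, four sub-cases arise. In the two \emph{parallel} cases (both matched up, or both matched down), one has $\ell(M(y)) - \ell(M(x)) = 1$, so the Bruhat edge must be a cover and one concludes $M(x) \lessdot M(y)$ immediately. In the two \emph{anti-parallel} cases, one gets either a length-$3$ Bruhat edge (when $x$ is matched down and $y$ is matched up) or a length-$1$ cover in the reverse direction, i.e.~$M(y) \lessdot M(x)$ (when $x$ is matched up and $y$ is matched down). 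Both anti-parallel configurations are consistent with $M$ being a graph automorphism but violate specialness, so the content of the converse is to rule them out.

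Ruling out these anti-parallel sub-cases is where the hypothesis that $W$ is $\mathfrak{S}_n$ or right-angled is essential. In the up--down case, the four elements $x, y, M(x), M(y)$ assemble into a rank-$2$ square of four Hasse edges, and I would derive a contradiction by tracking $M$ on an additional neighbor of $x$ or $y$ and invoking the limited structure of rank-$2$ Bruhat intervals (all of which are Boolean in $\mathfrak{S}_n$, and constrained similarly in right-angled groups). The main obstacle is the down--up case: here the length-$3$ Bruhat edge $\{M(x), M(y)\}$ sits outside the local rank-$2$ diamond, and extracting a contradiction requires detailed information about which reflections can jump length by exactly $3$ in the given group. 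In $\mathfrak{S}_n$ this is controlled by transposition-pattern arguments of the same flavor as Theorem~\ref{thm:smoothness}, and in right-angled Coxeter groups by the fact that any two distinct reflections commute or generate an infinite dihedral group. It is presumably this reflection-subgroup structure that is unavailable in general Coxeter groups, explaining why the statement is only conjectured there (Conjecture~\ref{conj:SM-equals-automorphism-general}).
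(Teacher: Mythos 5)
Your high-level split into the two directions is right, and you correctly observe that the forward direction (special matching $\Rightarrow$ automorphism) should hold for all Coxeter groups while the converse needs the hypothesis. But both directions have real gaps.

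In the forward direction, your base case $k=1$ is \emph{not} immediate from the definition. The special matching axiom gives $M(x)=y$ or $M(x)<M(y)$, not $M(x)\lessdot M(y)$. In the sub-case where $M(x)\lessdot x$ and $y\lessdot M(y)$, the length gap $\ell(M(y))-\ell(M(x))$ is $3$, so $M(x)$ and $M(y)$ do not form a Hasse edge; showing they still differ by a single reflection is genuine work. The paper resolves exactly this by building a hexagon $M(x)\lessdot x,x'\lessdot y,y'\lessdot M(y)$ and invoking a Dyer-style dihedral reflection subgroup argument (Lemma~\ref{lem:undirected-hexagon}). Your induction step ``find a saturated chain inside a dihedral reflection subgroup'' is the right flavor but glosses over that the same hexagon structure is needed at every length, via Dyer's Proposition 3.3.

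In the converse, your case analysis has a sign error that changes the structure of the proof. The down--up configuration (where $M(x)\lessdot x$ and $y\lessdot M(y)$, giving a length-$3$ Bruhat edge $\{M(x),M(y)\}$) does \emph{not} violate specialness: since $M$ is an automorphism, $M(x)$ and $M(y)$ differ by a reflection with $\ell(M(x))<\ell(M(y))$, so $M(x)<M(y)$ and the axiom is satisfied. The \emph{only} violation to rule out is the up--down case $M(y)\lessdot M(x)$. In that case $x,M(y)\lessdot y,M(x)$ form a \emph{butterfly}, and the crux of the paper's argument is a structural fact (Lemma~\ref{lem:butterfly}, proved separately for $\mathfrak{S}_n$ and right-angled groups) that every butterfly in $[u,v]$ has an upper bound $z\in[u,v]$ with $y,M(x)\lessdot z$. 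One then tracks $M(z)$ and a further element $w$ with $y\lessdot w\lessdot M(z)$ and derives a contradiction using the diamond property and Dyer's bound on the number of atoms in a rank-$2$ interval, taking $y$ of maximal length among violations. Your sketch of ``tracking $M$ on an additional neighbor and invoking rank-$2$ structure'' points in that general direction, but the missing ingredient --- identifying the butterfly and proving the upper-bound lemma, which is precisely where the hypothesis on $W$ enters --- is the heart of the theorem and is not supplied by your proposal.
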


\begin{conj}
\label{conj:SM-equals-automorphism-general}
Theorem~\ref{thm:SM-equals-automorphism-classical-type} holds for arbitrary Coxeter groups $W$.
\end{conj}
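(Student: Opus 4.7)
My plan is to separate the two directions of the claimed equivalence and try to eliminate the type-specific ingredients in the proof of Theorem~\ref{thm:SM-equals-automorphism-classical-type}.

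The easy direction, that every special matching of $[u,v]$ is an automorphism of $\Gamma(u,v)$, is really a local statement about rank-$2$ subintervals: once one knows how a cover-preserving, order-preserving involution interacts with each diamond $[a,b]$ of rank $2$, it automatically interacts correctly with every Bruhat edge $\{x, xt\}$ because that edge lies inside the rank-$2$ subinterval $[x \wedge xt, x \vee xt]$ (when it exists) and otherwise factors through a chain of such diamonds by the strong exchange property. Since Dyer's classification of rank-$2$ Bruhat intervals in arbitrary Coxeter groups produces only dihedral intervals, I expect the argument in the right-angled or symmetric case to go through with the commutation or pattern step replaced by the corresponding analysis inside a dihedral reflection subgroup.

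For the hard direction, let $\varphi$ be a $\Gamma(u,v)$-automorphism which is a perfect matching of the Hasse diagram, and let $x \lessdot y$ be a cover with $\varphi(x) \neq y$. The goal is to show $\varphi(x) \lessdot \varphi(y)$ with $\varphi(x) < \varphi(y)$. My plan would be to induct on $\ell(v) - \ell(u)$ and reduce the verification to a reflection subgroup via Dyer's theorem: the two Hasse edges $\{x, \varphi(x)\}$ and $\{y, \varphi(y)\}$, together with the cover $\{x,y\}$, determine at most three reflections, and these generate a dihedral reflection subgroup $W'$. The restriction of $\varphi$ to the relevant interval in $W'$ should give a graph automorphism of the undirected Bruhat graph of a dihedral interval, where matchings are explicitly classifiable and the special-matching condition can be checked by hand. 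The inductive hypothesis combined with this local analysis should force the required compatibility globally.

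The main obstacle will be that an automorphism of the \emph{undirected} graph $\Gamma(u,v)$ carries, a priori, no information about the partial order or about which edges are Hasse covers. In the right-angled and symmetric group cases one effectively recovers this information from explicit combinatorics (commutation classes, patterns) or by appealing to Theorem~\ref{thm:smoothness} and its refinements to exploit rank-symmetry; in a uniform proof, one must extract direction from the matching $\varphi$ itself. A promising route, suggested by the use of Billey--Postnikov decompositions elsewhere in the paper, is to give an intrinsic, graph-theoretic characterization of Hasse edges inside $\Gamma(u,v)$ in terms of the matching --- for example, identifying covers as the edges $\{a,b\}$ such that $\{a, \varphi(a), b, \varphi(b)\}$ spans a diamond --- and then showing that this characterization is preserved by $\varphi$. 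Establishing such a local-to-global reduction in arbitrary type, without Dyer-style absolute-length arguments failing for non-crystallographic groups, appears to be the genuine hard step.
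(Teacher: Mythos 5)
This is a conjecture, not a theorem, in the paper; the authors do not claim a proof, and you should not expect one to fall out of a blind attempt. The paper's partial progress consists of Theorem~\ref{thm:SM-implies-auto} (one direction, valid for all Coxeter groups), Proposition~\ref{prop:auto-implies-SM} (the converse, only for right-angled groups and symmetric groups), and Conjecture~\ref{conj:general-butterfly}, which isolates the one missing ingredient. Measured against that, your sketch has both alignments and genuine gaps.

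On the ``easy'' direction (special matching $\Rightarrow$ automorphism), your instinct to push everything into Dyer's dihedral reflection subgroups matches the spirit of the paper's Theorem~\ref{thm:SM-implies-auto} and Lemma~\ref{lem:undirected-hexagon}, but the specific claim that a Bruhat edge $\{x,xt\}$ lies in a rank-two interval $[x \wedge xt, x \vee xt]$ does not hold: Bruhat order is not a lattice, and even when a join or meet exists it need not be the right one. What actually works is an induction on $\ell(y)-\ell(x)$ using Dyer's hexagon configuration: every edge with rank difference $k>1$ sits in a hexagon whose lower edges have smaller rank difference, and the dihedral reduction (Lemma~\ref{lem:undirected-hexagon}) closes the argument. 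Your outline would need to be restructured around that configuration rather than around nonexistent meets and joins; once you do so you are essentially reproducing the paper's proof, which already covers arbitrary $W$ and so is not where the open content lies.

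On the ``hard'' direction (automorphism $\Rightarrow$ special matching) your plan has a concrete error and misses the real obstruction. You propose to generate a dihedral reflection subgroup from the three reflections attached to $\{x,\varphi(x)\}$, $\{y,\varphi(y)\}$, and $\{x,y\}$; but three reflections do not generate a dihedral reflection subgroup in general, so the proposed reduction does not get off the ground, and $\varphi$ in any case has no reason to stabilize the relevant coset of $W'$. What Proposition~\ref{prop:auto-implies-SM} actually does is take a maximal-length violation $x\lessdot y$ with $M(y)\lessdot M(x)$, observe that $x,M(y)\lessdot y,M(x)$ is a \emph{butterfly}, and then invoke Lemma~\ref{lem:butterfly} to produce $z\in[u,v]$ covering both $y$ and $M(x)$; a short diamond/height-three argument then yields a contradiction. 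The genuine open problem is exactly Lemma~\ref{lem:butterfly}: for arbitrary $W$ a butterfly in $[u,v]$ need \emph{not} have an upper bound inside $[u,v]$ (the paper's remark after Conjecture~\ref{conj:general-butterfly} cites an $F_4$ counterexample), so one must prove the weaker Conjecture~\ref{conj:general-butterfly}, that at least one of an upper or lower bound exists inside $[u,v]$, and then rework the contradiction to use whichever one is available. Your proposal does not identify butterflies or the failure of the upper-bound statement, and the ``intrinsic graph-theoretic characterization of Hasse edges'' you suggest is unnecessary (the matching is given on the Hasse diagram, so the order data is already available) and does not address the actual gap.
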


\subsection{Outline}

In Section~\ref{sec:background}, we cover background and definitions relating to Coxeter groups, Bruhat order and Bruhat graphs, Billey--Postnikov decompositions, and special matchings. In Section~\ref{sec:automorphisms} we describe several sources of automorphisms of $\Gamma(u,v)$: left and right multiplication by a simple generator $s$ via the Lifting Property, and \emph{middle multiplication}, which we introduce, when the interval admits a particularly nice Billey--Postnikov decomposition. In Section~\ref{sec:vertex-transitive} we prove Theorem~\ref{thm:vertex-transitive}, classifying vertex-transitive intervals $[e,w]$. In Section~\ref{sec:orbits} we give a more precise version of Conjecture~\ref{conj:orbit-is-interval} in terms of \emph{almost reducible decompositions} and some partial results towards resolving the conjecture. Section~\ref{sec:automorphisms} proves Theorem~\ref{thm:SM-equals-automorphism-classical-type} and one direction of Conjecture~\ref{conj:SM-equals-automorphism-general}, establishing a close connection between automorphisms of $\Gamma(u,v)$ and special matchings on $[u,v]$. The proof of Theorem~\ref{thm:SM-equals-automorphism-classical-type} relies on a structural property of Bruhat order, the existence of upper bounds of \emph{butterflies}, which may be of independent interest. This property is discussed and proven for the symmetric group and right-angled Coxeter groups in Section~\ref{sec:butterflies}. 

\section{Background and definitions}
\label{sec:background}
\subsection{Coxeter groups and reflections}
We refer the reader to \cite{bjorner-brenti} for basic definitions and background for Coxeter groups.

For a Coxeter group $W$ with simple generators $S=\{s_1,\ldots,s_r\}$ and an element $w \in W$, an expression $w=s_{i_1}\cdots s_{i_{\ell}}$ is a \emph{reduced word} of $w$ if it is of minimal length, and in this case $\ell=\ell(w)$ is the \emph{length} of $w$. The \emph{reflections} $T$ are the $W$-conjugates of the simple reflections. The \emph{(left) inversion set} of $w$ is
\[
T_L(w) \coloneqq \{ t \in T \mid \ell(tw)<\ell(w)\},
\]
and the \emph{(left) descent set} of $w$ is $D_L(w)\coloneqq S \cap T_L(w)$. \emph{Right} inversion and descent sets $T_R(w),D_R(w)$ are defined analogously, using instead right multiplication by $t$. It is not hard to see that $\ell(w)=|T_L(w)|=|T_R(w)|$.

Given $J \subseteq S$, the \emph{parabolic subgroup} $W_J$ is the subgroup of $W$ generated by $J$, viewed as a Coxeter group with simple generators $J$. Each coset $wW_J$ for $W_J$ in $W$ contains a unique element $w^J$ of minimal length, and this determines a decomposition $w=w^Jw_J$ with $w_J \in W_J$ and $\ell(w)=\ell(w^J)+\ell(w_J)$. The set $W^J \coloneqq \{w^J \mid w \in W\}$ is the \emph{parabolic quotient} of $W$ with respect to $J$, and has the following alternative description:
\[
W^J = \{ u \in W \mid D_R(u) \cap J = \emptyset \}.
\]

If $W$ is finite, it contains a unique element $w_0$ of maximum length, and the image $w_0^J$ of $w_0$ in any parabolic quotient is the unique longest element of $W^J$. We write $w_0(J)$ for the longest element of the parabolic subgroup $W_J$.

\subsection{Bruhat graphs and Bruhat order}

The \emph{directed Bruhat graph} $\widehat{\Gamma}$ of $W$ is the directed graph with vertex set $W$ and directed edges $w \to wt$ whenever $t$ is a reflection with $\ell(wt)>\ell(w)$. Note that, since $T$ is closed under conjugation, the ``left" and ``right" versions of $\widehat{\Gamma}$ in fact coincide. The \emph{(undirected) Bruhat graph} $\Gamma$ is the associated simple undirected graph.  The directed graph $\widehat{\Gamma}$ is much more commonly considered in the literature, and often called ``the Bruhat graph" but, since our focus in this work is on the undirected graph $\Gamma$, when directedness is not specified we mean the undirected graph.

The \emph{(strong) Bruhat order} $(W,\leq)$ is the partial order on $W$ obtained by taking the transitive closure of the relation determined by $\widehat{\Gamma}$. We write $[u,v]$ for the interval $\{w \in W \mid u \leq w \leq v\}$ in Bruhat order. For $u \leq v$, we write $\widehat{\Gamma}(u,v)$ and $\Gamma(u,v)$ for the restrictions of $\widehat{\Gamma}, \Gamma$ to the vertex set $[u,v]$; when $u$ is the identity element $e$, we sometimes write simply $\widehat{\Gamma}(v)$ and $\Gamma(v)$.

The following fundamental properties of Bruhat order will be of use throughout the paper.

\begin{prop}[Exchange Property]
\label{prop:exchange-property}
Let $w \in W$ and $t \in T$ be such that $\ell(wt)<\ell(w)$, and let $s_{i_1}\cdots s_{i_k}$ be any (not-necessarily-reduced) expression for $w$, then for some $j$ we have
\[
wt=s_{i_1}\cdots \widehat{s_{i_j}} \cdots s_{i_k}.
\]
\end{prop}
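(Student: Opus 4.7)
My plan is to invoke the geometric (reflection) representation of the Coxeter group $W$ on a real vector space $V$ with simple roots $\alpha_1, \ldots, \alpha_r$, so that each reflection $t \in T$ is associated with a well-defined positive root $\alpha_t$. A fundamental fact from \cite{bjorner-brenti} is that $\ell(wt) < \ell(w)$ if and only if $w(\alpha_t)$ is a negative root; I would take this as the main input.

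Given an arbitrary expression $w = s_{i_1} s_{i_2} \cdots s_{i_k}$, I would track the signs of the sequence of roots
\[
\gamma_j := s_{i_j} s_{i_{j+1}} \cdots s_{i_k}(\alpha_t), \qquad j = 1, 2, \ldots, k+1,
\]
where $\gamma_{k+1} := \alpha_t$ by convention. The hypothesis implies $\gamma_{k+1}$ is positive while $\gamma_1 = w(\alpha_t)$ is negative, so there exists an index $j \in \{1, \ldots, k\}$ for which $\gamma_{j+1}$ is positive but $\gamma_j = s_{i_j}(\gamma_{j+1})$ is negative. The crucial lemma is that a simple reflection sends a positive root to a negative root only when applied to its own simple root; hence $\gamma_{j+1} = \alpha_{i_j}$. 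Since conjugation carries the reflection for a root to the reflection for its image, the identity $(s_{i_{j+1}} \cdots s_{i_k})(\alpha_t) = \alpha_{i_j}$ translates to
\[
t = s_{i_k} s_{i_{k-1}} \cdots s_{i_{j+1}} s_{i_j} s_{i_{j+1}} \cdots s_{i_{k-1}} s_{i_k}.
\]
Multiplying by $w$ on the left and iteratively cancelling pairs $s_{i_\ell} s_{i_\ell} = e$ from the outer layers then yields
\[
wt = s_{i_1} \cdots s_{i_{j-1}} s_{i_{j+1}} \cdots s_{i_k},
\]
which is exactly the expression with $s_{i_j}$ omitted, as required.

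The main technical ingredient — and the only genuine obstacle — is setting up the root-system dictionary: the existence of the geometric representation, the bijection between reflections and positive roots, and the length characterization via signs of roots. All of these are standard and can simply be cited from \cite{bjorner-brenti}; once they are in hand, the proof reduces to the sign-flip bookkeeping above, and no separate treatment of the base case is needed since the hypothesis $\ell(wt)<\ell(w)$ forces $k \geq 1$.
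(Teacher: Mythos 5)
The paper states Proposition~\ref{prop:exchange-property} as a background fact and cites it to \cite{bjorner-brenti} without giving a proof, so there is no in-paper argument to compare against. Your proof is the standard geometric-representation argument (the one found in Björner--Brenti and Humphreys) and it is correct: the length criterion $\ell(wt)<\ell(w) \iff w(\alpha_t)<0$, the observation that a simple reflection $s_{i_j}$ sends a positive root to a negative one only at its own simple root, and the conjugation identity $u\,t_{\alpha}\,u^{-1}=t_{u\alpha}$ together give $t=s_{i_k}\cdots s_{i_{j+1}}s_{i_j}s_{i_{j+1}}\cdots s_{i_k}$, after which the telescoping cancellation yields $wt = s_{i_1}\cdots\widehat{s_{i_j}}\cdots s_{i_k}$ exactly as you write.
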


\begin{prop}[Subword Property]
\label{prop:subword-property}
Let $u,v \in W$, then $u \leq v$ if and only if some (equivalently, every) reduced word for $v$ contains a reduced word for $u$ as a subword.
\end{prop}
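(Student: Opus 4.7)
The plan is to deduce the Subword Property from the Exchange Property (Proposition~\ref{prop:exchange-property}) by first establishing the \emph{Lifting Property} as an intermediate lemma and then inducting on $\ell(v)$ for both directions in parallel.

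The Lifting Property I would prove reads as follows: if $s \in S$ with $s \in D_R(v)$ and $s \notin D_R(u)$, then $u \leq v$ implies both $u \leq vs$ and $us \leq v$. To prove this, I would take a saturated chain of Bruhat covers $u = w_0 \lessdot w_1 \lessdot \cdots \lessdot w_k = v$, where each $w_i = w_{i-1} t_i$ for some reflection $t_i$, and track $w_i^\star := \min(w_i, w_i s)$ along the chain, verifying that $w_{i-1}^\star \leq w_i^\star$ using the Exchange Property applied to a reduced expression for $w_i$ ending in $s$. The delicate case is when the letter deleted by Exchange is the trailing $s$, in which case the two chains synchronize; otherwise the reflection structure is preserved. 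Since $w_0^\star = u$ and $w_k^\star = vs$, this yields $u \leq vs$, and $us \leq v$ follows from a symmetric argument.

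With Lifting in hand, both directions of the Subword Property reduce to parallel inductions on $\ell(v)$. For the ``if'' direction, fix a reduced word $s_{i_1} \cdots s_{i_\ell}$ for $v$ containing a reduced word for $u$ as a subword, set $s = s_{i_\ell}$, and split on whether position $\ell$ lies in the subword. If not, induction gives $u \leq vs \leq v$. If so, induction (applied to the reduced subword for $us$ inside $s_{i_1}\cdots s_{i_{\ell-1}}$) gives $us \leq vs \leq v$, and Lifting applied to $us \leq v$ (with $s \notin D_R(us)$, $s \in D_R(v)$) yields $u \leq v$. For the ``only if'' direction, fix any reduced word $s_{i_1}\cdots s_{i_\ell}$ for $v$, set $s = s_{i_\ell}$, and let $u^\star = us$ if $s \in D_R(u)$ or $u^\star = u$ otherwise. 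Lifting (applied appropriately) yields $u^\star \leq vs$, so by induction $s_{i_1}\cdots s_{i_{\ell-1}}$ contains a reduced word for $u^\star$ as a subword, which I either keep as-is (in the second case) or extend by appending $s_{i_\ell} = s$ (in the first, where the resulting word for $u$ has length $\ell(us) + 1 = \ell(u)$ and is therefore automatically reduced).

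The main obstacle is the forward direction of the Lifting Property, which is where the Exchange Property genuinely enters. Tracking $w_i^\star$ along a saturated chain and verifying the needed cover or equality relation requires careful attention to which letter is deleted by the Exchange Property when applied to a reduced expression for $w_i$ ending in $s$, with case analysis depending on whether this letter is an interior letter or the trailing $s$. Once Lifting is established, both directions of the Subword Property are routine inductions performing bookkeeping on subwords of reduced expressions.
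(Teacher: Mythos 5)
The paper states Proposition~\ref{prop:subword-property} without proof; it is a standard background fact cited from \cite{bjorner-brenti}, so there is no internal argument to compare against. Your proof is correct, and it follows a genuinely different route from the standard textbook treatment (Björner--Brenti, Chapter~2): there the Subword Property is derived directly from the (Strong) Exchange Property by repeatedly deleting letters along a chain of Bruhat relations, and the Lifting Property is then deduced as a corollary of the Subword Property. You reverse the order, proving Lifting first by tracking $w_i^\star = \min(w_i, w_i s)$ along a chain, and then reducing both implications of the Subword Property to Lifting by induction on $\ell(v)$. The trade-off is that the forward direction of the Subword Property becomes almost mechanical once Lifting is in hand, at the cost of putting all the Exchange-Property work into the cover-level case analysis in the Lifting argument. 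Your analysis there is sound: in the sub-case where $s$ changes from a non-descent of $w_{i-1}$ to a descent of $w_i$, applying Exchange to a reduced word for $w_i$ ending in $s$ and to the reflection taking $w_i$ down to $w_{i-1}$ shows that an interior deletion would leave $w_{i-1}$ ending in $s$, contradicting $s\notin D_R(w_{i-1})$, so the trailing $s$ must be the deleted letter and the chains synchronize.

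One point of caution: you phrase the Lifting argument over a \emph{saturated} chain of Bruhat covers $u = w_0 \lessdot \cdots \lessdot w_k = v$, but the existence of saturated chains (gradedness of Bruhat order) is itself usually deduced from the Subword Property, so invoking it here risks circularity. Fortunately this is only a cosmetic blemish, not a gap: the same tracking argument goes through unchanged along any chain $u = w_0 < w_1 < \cdots < w_k = v$ with $w_i = w_{i-1} t_i$ and $\ell(w_i) > \ell(w_{i-1})$, which exists directly from the definition of Bruhat order as a transitive closure. In the mixed-descent case, if $\ell(w_i) > \ell(w_{i-1}) + 1$ one checks that $w_{i-1}s$ and $w_i s$ differ by the reflection $s t_i s$ with $\ell(w_{i-1}s) < \ell(w_i s)$, giving $w_{i-1}^\star = w_{i-1} < w_{i-1}s < w_i s = w_i^\star$, while the synchronization $w_{i-1} = w_i s$ (forced via the Exchange Property as above) occurs precisely when $\ell(w_i) = \ell(w_{i-1})+1$. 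With this rewording the argument is self-contained, resting only on the Exchange Property.
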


\begin{prop}[Lifting Property]
\label{prop:lifting-property}
Let $u \leq v$. If $s \in D_L(v) \setminus D_L(u)$, then $su<v$ and $u<sv$; analogously, if $s \in D_R(v) \setminus D_R(u)$, then $us<v$ and $u<vs$.
\end{prop}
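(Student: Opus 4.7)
The plan is to reduce everything to the Subword Property (Proposition~\ref{prop:subword-property}) via reduced words for $v$ adapted to the descent $s$. I will prove the left-sided conclusions; the right-sided statements are symmetric.

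Since $s \in D_L(v)$ we have $\ell(sv) = \ell(v)-1$, so any reduced word for $sv$ can be prepended with $s$ to yield a reduced word for $v$. Fix such a reduced word $v = s \cdot s_{i_2} \cdots s_{i_k}$ and write $r = s_{i_2} \cdots s_{i_k}$, a reduced word for $sv$. By the Subword Property applied to $u \leq v$, some reduced word $q$ for $u$ appears as a subword of $s \cdot r$. If $q$ uses the initial letter $s$, then $u$ admits a reduced expression beginning with $s$, forcing $s \in D_L(u)$ and contradicting the hypothesis. Hence $q$ lies entirely within $r$, and the Subword Property applied to the reduced word $r$ for $sv$ yields $u \leq sv$.

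For the other conclusion, the hypothesis $s \notin D_L(u)$ gives $\ell(su) = \ell(u)+1$, so prepending $s$ to $q$ yields a reduced word $s \cdot q$ for $su$. This sits as a subword of the reduced word $s \cdot r$ for $v$, so $su \leq v$ by the Subword Property. The right-sided conclusions follow by the identical argument with left and right multiplication interchanged, using a reduced word for $v$ ending in $s$. There is no real obstacle here: the entire content is choosing reduced words adapted to $s$ so that the Subword Property does the work, and the Exchange Property (Proposition~\ref{prop:exchange-property}) is not even needed. I note in passing that the stated strict inequalities must be read as $\leq$ in general---for instance in $\mathfrak{S}_3$ with $u = s_2$, $v = s_1 s_2$, $s = s_1$ one has $su = v$ and $sv = u$---but the argument above establishes exactly the standard form of the Lifting Property.
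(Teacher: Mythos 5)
The paper does not prove Proposition~\ref{prop:lifting-property}: it is stated, together with the Exchange and Subword Properties, as one of the ``fundamental properties of Bruhat order'' cited for later use (from \cite{bjorner-brenti}), so there is no in-paper proof to compare against. Your argument via the Subword Property is correct and is one of the standard proofs: fix a reduced word for $v$ beginning with $s$, note that a reduced subword for $u$ cannot use the initial $s$ (else $s\in D_L(u)$), extract a reduced word for $u$ inside the tail $r$ to get $u\le sv$, and prepend $s$ to obtain a reduced word for $su$ sitting inside the word for $v$ to get $su\le v$. Your closing observation is also correct and worth flagging: with $u=s_2$, $v=s_1s_2$, $s=s_1$ in $\mathfrak{S}_3$ one has $su=v$ and $sv=u$, so the conclusions should read $su\le v$ and $u\le sv$ rather than strict inequalities; indeed the paper itself uses the non-strict form when invoking this proposition in the proof of Proposition~\ref{prop:right-left-multiplication-give-autos}, so the strict inequalities in the statement are simply a typo.
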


\begin{prop}
\label{prop:monotonicity-of-projection}
Let $u \leq v$, then for any $J \subseteq S$ we have $u^J \leq v^J$.
\end{prop}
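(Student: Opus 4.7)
The plan is to induct on $\ell(v)$. The base case $\ell(v) = 0$ is immediate since then $u = v = e$. For the inductive step, I split on whether $v \in W^J$. If $v = v^J$, then the reduced factorization $u = u^J u_J$ with $\ell(u) = \ell(u^J) + \ell(u_J)$ exhibits $u^J$ as a prefix subword of $u$, so the Subword Property gives $u^J \leq u$, and chaining with $u \leq v = v^J$ finishes the case.

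Otherwise I choose $s \in D_R(v) \cap J$. A quick check on parabolic factorizations confirms $(vs)^J = v^J$: writing $v = v^J v_J$ with $s \in J \cap D_R(v_J)$, the product $v^J (v_J s)$ is a length-additive factorization of $vs$ with $v_J s \in W_J$, so $(vs)^J = v^J$. Now I branch on whether $s \in D_R(u)$. If $s \notin D_R(u)$, the Lifting Property applied to the pair $u \leq v$ yields $u \leq vs$, and the inductive hypothesis applied to $u \leq vs$ (whose top has smaller length) gives $u^J \leq (vs)^J = v^J$. If $s \in D_R(u)$, the same parabolic calculation shows $(us)^J = u^J$, and it remains to verify $us \leq vs$ so that induction applied to $us \leq vs$ can close the case.

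The only non-automatic step is this last claim: if $u \leq v$ and $s$ is a right descent of both $u$ and $v$, then $us \leq vs$. I would derive this from the Subword Property: fix a reduced word for $v$ ending in $s$, and note it contains a reduced word for $u$ as a subword. If that subword already uses the final letter $s$, then deleting it from both words exhibits $us$ as a subword of $vs$, so $us \leq vs$. If it does not, the chosen subword lies entirely inside the reduced word for $vs$, so $u \leq vs$ and hence $us < u \leq vs$.

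The main obstacle is really only this auxiliary common-descent lemma, and even that amounts to a single case split on a reduced word. Everything else is bookkeeping with parabolic factorizations and a direct application of the Lifting Property, so no ingredient beyond what is already recorded in the excerpt is needed.
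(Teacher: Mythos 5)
The paper states Proposition~\ref{prop:monotonicity-of-projection} without proof, citing it as a well-known fundamental property of Bruhat order (it is, e.g., Proposition 2.5.1 in Bj\"orner--Brenti). Your inductive argument is correct and is essentially the standard textbook proof: induct on $\ell(v)$, handle $v \in W^J$ directly via the Subword Property applied to the length-additive factorization of $u$, and otherwise peel off a right descent $s \in D_R(v) \cap J$ (noting $(vs)^J = v^J$) and split on whether $s \in D_R(u)$.

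One simplification worth noting: the ``common-descent lemma'' you isolate (if $u \leq v$ with $s \in D_R(u) \cap D_R(v)$, then $us \leq vs$) does not need the reduced-word case split. Since $us < u \leq v$ and $s \in D_R(v) \setminus D_R(us)$, the Lifting Property (Proposition~\ref{prop:lifting-property}) applied to the pair $us \leq v$ immediately gives $us \leq vs$. This keeps the whole argument within the toolkit the paper has already recorded, without a separate excursion into the Subword Property for that step.
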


\subsection{Billey--Postnikov decompositions}

For $w \in W$, we write $\supp(w)$ for the \emph{support} of $w$: the set of simple reflections appearing in some (equivalently, every) reduced word for $w$.

\begin{defin}[Billey--Postnikov \cite{billey-postnikov}, Richmond--Slofstra \cite{richmond-slofstra-fiber-bundle}]
\label{def:BP-decomposition}
Let $W$ be a Coxeter group and $J \subseteq S$, the parabolic decomposition $w=w^Jw_J$ of $w$ is a \emph{Billey--Postnikov decomposition} or \emph{BP-decomposition} if
\[
\supp(w^J) \cap J \subseteq D_L(w_J).
\]
\end{defin}

BP-decompositions were introduced by Billey and Postnikov in \cite{billey-postnikov} in the course of their study of pattern avoidance criteria for smoothness of Schubert varieties in all finite types. The following characterizations of BP-decompositions will be useful to us.

\begin{prop}[Richmond--Slofstra \cite{richmond-slofstra-fiber-bundle}]
\label{prop:BP-equivalences}
For $w \in W$ and $J \subseteq S$, the following are equivalent:
\begin{enumerate}
    \item $w=w^Jw_J$ is a BP-decomposition,
    \item the multiplication map $\left( [e,w^J] \cap W^J \right) \times [e,w_J] \to [e,w]$ is a bijection,
    \item $w_J$ is the maximal element of $W_J \cap [e,w]$.
\end{enumerate}
\end{prop}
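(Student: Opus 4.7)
The plan is to prove the three conditions equivalent cyclically as $(2) \Rightarrow (3) \Rightarrow (1) \Rightarrow (2)$, with the last implication carrying the main content. As a preliminary, I would observe that without any hypothesis the multiplication map $\mu: ([e,w^J] \cap W^J) \times [e,w_J] \to W$ is injective (by uniqueness of parabolic decompositions) and takes values inside $[e,w]$: for $a \in [e,w^J] \cap W^J$ and $b \in [e,w_J]$, concatenating reduced words for $a$ and $b$ gives a reduced word for $ab$, which is a subword of $\mathbf{w}^J \mathbf{w}_J$, so $ab \leq w$ by the Subword Property (Proposition~\ref{prop:subword-property}). A second useful fact is $u_J \leq u$ for every $u \in W$, again by the Subword Property applied to a reduced expression of $u = u^J u_J$.

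For $(2) \Rightarrow (3)$, any $u \in W_J \cap [e,w]$ is the image of the unique pair $(e,u)$ under $\mu$, so $u \leq w_J$. For $(3) \Rightarrow (1)$, I argue the contrapositive: suppose $s \in (\supp(w^J) \cap J) \setminus D_L(w_J)$, and choose a reduced word $\mathbf{w}^J$ for $w^J$ containing an occurrence of $s$. Then $\mathbf{w}^J \mathbf{w}_J$ is a reduced word for $w$, and extracting the marked $s$ together with all of $\mathbf{w}_J$ produces the subword $s \cdot \mathbf{w}_J$, which is a reduced word for $sw_J$ since $s \notin D_L(w_J)$. Thus $sw_J \leq w$, but $sw_J \in W_J$ strictly exceeds $w_J$ in length, contradicting (3).

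The main content is $(1) \Rightarrow (2)$. Using the preliminary fact $u_J \leq u$, it suffices to show that $W_J \cap [e,w] \subseteq [e,w_J]$ whenever $w = w^J w_J$ is a BP-decomposition: then for any $u \leq w$ we have $u_J \in W_J \cap [e,w] \subseteq [e,w_J]$, and combined with $u^J \leq w^J$ (Proposition~\ref{prop:monotonicity-of-projection}) this gives surjectivity of $\mu$. I prove the inclusion by induction on $\ell(u)$. For the inductive step, pick $s \in D_L(u) \subseteq J$, so by induction $su \leq w_J$. If $s \in D_L(w_J)$, then the Lifting Property (Proposition~\ref{prop:lifting-property}) applied to $su \leq w_J$ with $s \in D_L(w_J) \setminus D_L(su)$---the latter because $s \cdot su = u > su$---yields $u < w_J$, as desired.

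The remaining case $s \notin D_L(w_J)$ is the main obstacle. The BP hypothesis forces $s \notin \supp(w^J)$, and together with $s \leq u \leq w$ and $\supp(w) = \supp(w^J) \cup \supp(w_J)$ this gives $s \in \supp(w_J)$; that is, $s$ is an ``interior'' letter of $w_J$. To handle this, I would run a secondary induction on $\ell(w^J)$: any reduced word for $u$ embeds in $\mathbf{w}^J \mathbf{w}_J$ with the portion from $\mathbf{w}^J$ consisting of letters in $J \cap \supp(w^J) \subseteq D_L(w_J)$, so the BP condition lets us attempt to \emph{shift} those letters rightward across the boundary via the Exchange Property and a careful tracking of how left descents of $w_J$ interact with the suffix $\mathbf{w}_J$. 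The hard part is to make this shifting precise without losing reducedness; I expect the cleanest route is to reduce to the case $\ell(w^J) = 0$ by stripping a leftmost descent of $w^J$ lying outside $J$ (which preserves the BP-decomposition with the same $w_J$) and absorbing leftmost $J$-descents of $w^J$ into $w_J$ using that they lie in $D_L(w_J)$.
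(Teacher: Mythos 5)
The implications $(2)\Rightarrow(3)$ and $(3)\Rightarrow(1)$ are correct, and your preliminary facts (injectivity and well-definedness of $\mu$, and $u_J\leq u$) are fine; likewise the reduction of $(1)\Rightarrow(2)$ to the inclusion $W_J\cap[e,w]\subseteq[e,w_J]$, together with the lifting argument in the subcase $s\in D_L(w_J)$, is sound. The gap is in the remaining subcase $s\notin D_L(w_J)$: what you offer there is not a proof but a plan, and the plan as stated does not work. You propose to reduce $\ell(w^J)$ by ``stripping a leftmost descent of $w^J$ lying outside $J$,'' but no such descent need exist --- for instance $W=\mathfrak{S}_3$, $J=\{s_1\}$, $w=s_1s_2s_1$ has $w^J=s_1s_2$ with $D_L(w^J)=\{s_1\}\subseteq J$. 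The complementary move, ``absorbing leftmost $J$-descents of $w^J$ into $w_J$,'' is never defined precisely, and the most natural reading (pass from $w=w^Jw_J$ to $s'w = (s'w^J)w_J$) is not ``absorption'' into $w_J$ at all; moreover you would then need, and do not verify, that $s'w^J\in W^J$ and that $u\leq s'w$ still holds when $s'\in D_L(u)$.

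In fact the idea you state one sentence earlier --- that a reduced word for $u$ embeds in $\mathbf{w}^J\mathbf{w}_J$ as $\mathbf{a}\mathbf{b}$ with every letter of $\mathbf{a}$ in $\supp(w^J)\cap J\subseteq D_L(w_J)$ --- already finishes the argument without any ``shifting'' or secondary induction: writing $\mathbf{a}=s_{j_1}\cdots s_{j_m}$ and letting $b$ be the element spelled by $\mathbf{b}$ (so $b\leq w_J$), one shows $s_{j_i}\cdots s_{j_m}b\leq w_J$ by downward induction on $i$, since at each step $s_{j_i}\in D_L(w_J)$ while $s_{j_i}\notin D_L(s_{j_{i+1}}\cdots s_{j_m}b)$ (the latter being a suffix of the reduced word $\mathbf{u}$), so the Lifting Property applies. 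Alternatively, one can discard the induction on $\ell(u)$ entirely and induct only on $\ell(w^J)$, choosing any $s'\in D_L(w^J)$: if $s'\notin D_L(u)$ then $u\leq s'w$ by Lifting and one passes to the BP-decomposition $(s'w^J)w_J$ (note $s'w^J\in W^J$ since $W^J$ is a lower ideal in left weak order); if $s'\in D_L(u)$ then necessarily $s'\in J\cap\supp(w^J)\subseteq D_L(w_J)$, so $s'u\leq s'w$ gives $s'u\leq w_J$ by induction and one more application of Lifting lifts this to $u\leq w_J$. Either completion is short, but as written your proposal leaves the key case unproved.
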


\subsection{Special matchings}
\label{sec:intro-SM}

The \emph{Hasse diagram}, denoted $H(P)$, of a poset $P$ is the undirected graph with vertex set $P$ and edges $(x,y)$ whenever $x \lessdot_P y$ is a cover relation in $P$. Note that the Hasse diagram $H(W)$ of Bruhat order on $W$ is a (non-induced) subgraph of $\Gamma$, as the two graphs share the vertex set $W$, but $H(W)$ contains only those edges $(x,y)$ of $\Gamma$ such that $| \ell(x) - \ell(y) | =1$. A \emph{perfect matching} of a graph $G$ is a fixed-point-free involution $M:G \to G$ such that $(x,M(x))$ is an edge of $G$ for all $x \in G$.

\begin{defin}[Brenti \cite{SM-original}, Brenti--Caselli--Marietti \cite{SM-advances}]
\label{def:special-matching}
A perfect matching $M$ on the Hasse diagram of a poset $P$ is a \emph{special matching} if, for every cover relation $x \lessdot_P y$, either $M(x)=y$ or $M(x) <_P M(y)$.
\end{defin}

The following general property of special matchings will be useful.

\begin{prop}[Brenti--Caselli--Marietti \cite{SM-advances}]
\label{prop:sm-restricts-to-interval}
Let $x<_P y$ and let $M$ be a special matching on a poset $P$. If $x<_P M(x)$ and $M(y)<_P y$, then $M$ restricts to a special matching on the poset $[x,y]$.
\end{prop}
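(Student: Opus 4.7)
The plan is to reduce the proposition to a single monotonicity lemma describing how a special matching interacts with the partial order. Specifically, I would first establish the following: for a special matching $M$ on a poset $P$ and elements $a \leq b$, either $M(a) \leq M(b)$, or else $a < M(a)$ and $b > M(b)$, in which case $a \leq M(b)$ and $M(a) \leq b$. Informally, a special matching is ``as order-preserving as possible,'' and when the two endpoints of an order relation are matched in opposite directions, their matches remain sandwiched between them.

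I would prove this lemma by induction on $\ell(b) - \ell(a)$. The base case $a \lessdot b$ is immediate from Definition~\ref{def:special-matching}: either $M(a) = b$ (so $M(b) = a$ and both inequalities are equalities), or $M(a) < M(b)$, which handles all three sub-cases simultaneously. For the inductive step, pick any $a'$ with $a \lessdot a' \leq b$, apply the inductive hypothesis to the pairs $(a, a')$ and $(a', b)$, and then split on whether $M(a')$ lies above or below $a'$ and on the directions of $M(a)$ and $M(b)$, composing the two conclusions in each of the (finitely many) cases.

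Granted the lemma, the proposition itself is short. Fix $z \in [x, y]$; I want to show $M(z) \in [x, y]$. If $M(z) > z$, then $x \leq z < M(z)$ gives the lower bound, while applying the lemma to the pair $(z, y)$---which is in the mixed case, since $z < M(z)$ and $y > M(y)$---yields $M(z) \leq y$. If instead $M(z) < z$, then $M(z) < z \leq y$ gives the upper bound, and applying the lemma to $(x, z)$---again mixed, since $x < M(x)$ and $z > M(z)$---yields $x \leq M(z)$. Thus $M$ restricts to a fixed-point-free involution of $[x, y]$. Because $[x, y]$ is an interval in $P$, every matched pair $\{z, M(z)\}$ remains a Hasse cover in $[x, y]$, so the restriction is a perfect matching of $H([x, y])$. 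Finally, the special matching condition in $[x, y]$ is inherited verbatim from $P$, since each cover of $[x, y]$ is also a cover of $P$.

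The main obstacle is the monotonicity lemma, and within it the mixed case of the induction: when $M(a) > a$ and $M(b) < b$, I must keep $a \leq M(b)$ and $M(a) \leq b$ under control regardless of whether $M(a')$ points up or down at the intermediate cover $a \lessdot a'$. The bookkeeping is routine but needs to be done with some care; all three cases of the lemma must be carried along simultaneously in the induction because they feed into each other. Once that lemma is in hand, no further work specific to Bruhat order is required---this is a purely order-theoretic statement about special matchings.
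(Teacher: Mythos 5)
The paper states this proposition as a citation to Brenti--Caselli--Marietti \cite{SM-advances} and does not reprove it, so there is no internal argument to compare against; your proof is correct and essentially reproduces the approach of that reference. The monotonicity lemma you isolate is the standard lifting-type property of special matchings, proved there by the same induction on rank difference, and the deduction of the proposition from it is the usual one. Two small remarks. First, your phrase ``which is in the mixed case'' is a slight imprecision: the hypotheses $z<M(z)$ and $M(y)<y$ do not force the second alternative of your lemma to be the one that holds, but either alternative yields $M(z)\leq y$ (in the first alternative $M(z)\leq M(y)<y$), so the conclusion is unaffected. Second, the induction on $\ell(b)-\ell(a)$ implicitly uses that $P$ is graded; this is harmless in context since all the posets under consideration are Bruhat intervals, which are graded by length.
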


\section{Automorphisms and middle multiplication}
\label{sec:automorphisms}
\subsection{Elementary automorphisms}\label{sub:elementary-automorphisms}
The following result of Waterhouse shows that $\widehat{\Gamma}$ has no nontrivial automorphisms as a directed graph.

\begin{theorem}[Waterhouse \cite{Waterhouse}]
Let $W$ be an irreducible Coxeter group which is not dihedral, then $\aut((W,\leq))$ (equivalently, $\aut(\widehat{\Gamma})$) is generated by the graph automorphisms of the Dynkin diagram of $W$ and the group inversion map on $W$.
\end{theorem}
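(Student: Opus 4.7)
The plan is to analyze $\varphi \in \aut(\widehat\Gamma) = \aut((W,\leq))$. (The equality of these two groups is a preliminary technical point: $\leq$ is the transitive closure of $\widehat\Gamma$, giving one inclusion, while the other uses that the reflection edges of $\widehat\Gamma$ can be detected from the poset structure, e.g.\ via Dyer's work on reflection subgroups of Coxeter groups.) Since $e$ is the unique minimum of $(W,\leq)$, every $\varphi$ fixes $e$, and therefore $\varphi$ permutes the poset atoms, which are precisely the simple reflections $S$. Let $\sigma \coloneqq \varphi|_S : S \to S$ denote the induced permutation.

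The first substantive step is to show that $\sigma$ is an automorphism of the Dynkin diagram, i.e.\ preserves the Coxeter exponents $m(s,s')$. The key is that each rank-$2$ standard parabolic subgroup $W_{\{s,s'\}}$ is recoverable purely from $\leq$: by the subword property, $W_{\{s,s'\}}=\{e\}\cup \{w \in W : \emptyset \neq \supp(w)\subseteq \{s,s'\}\}$, where $\supp(w)=\{s''\in S : s''\leq w\}$ is poset-theoretic. The restriction of $\leq$ to this set is a dihedral ``double chain'' whose rank-generating function records $m(s,s')$, so $\sigma$ must preserve $m$.

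After composing $\varphi$ with the automorphism of $(W,\leq)$ induced by $\sigma^{-1}$, I may assume $\sigma=\mathrm{id}_S$ and aim to show $\varphi\in\{\mathrm{id},\,w\mapsto w^{-1}\}$. For each pair $\{s,s'\}\subseteq S$ with $m(s,s')\geq 3$, the two distinct length-$2$ elements $ss'$ and $s's$ are the only length-$2$ elements simultaneously above $s$ and $s'$, so $\varphi$ either fixes or swaps them; record this choice as a sign $\varepsilon(s,s')\in\{\pm 1\}$ (with $+$ meaning ``fix''). The identity map produces the all-$(+)$ sign, and the inversion map $w\mapsto w^{-1}$ (which is a poset automorphism because $(s_{i_1}\cdots s_{i_k})^{-1}=s_{i_k}\cdots s_{i_1}$) produces the all-$(-)$ sign. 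Once $\varepsilon$ is shown to be constant, a length induction determines $\varphi(w)$ uniquely for each $w$: in the $+$ case every $\varphi(w)$ is forced to equal $w$, and in the $-$ case to equal $w^{-1}$.

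The main obstacle is establishing that $\varepsilon$ is globally constant; here the irreducibility and non-dihedral hypotheses are used essentially. Given simple reflections $s,s',s''$ with $m(s,s'),m(s',s'')\geq 3$, I would examine the action of $\varphi$ on rank-$3$ elements supported on $\{s,s',s''\}$: the relevant Coxeter (braid and commutation) relations, combined with $\varphi$'s preservation of covers emanating from each length-$2$ element, force the compatibility $\varepsilon(s,s')=\varepsilon(s',s'')$. Connectedness of the Dynkin diagram (from irreducibility) together with the non-dihedral hypothesis (guaranteeing at least three simple reflections, and hence a connecting path across any two non-commuting pairs) then propagates a single sign across all non-commuting pairs. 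The non-dihedral hypothesis is genuinely needed, since a dihedral group has only one non-commuting pair, so no propagation is required or possible, and additional ``local'' automorphisms beyond those claimed in the theorem can arise.
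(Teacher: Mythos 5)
The paper does not prove this theorem; it is stated as a citation to Waterhouse, so there is no in-paper argument to compare against. Your high-level plan (fix $e$, permute the atoms $S$, recover rank-$2$ parabolics poset-theoretically to get a diagram automorphism $\sigma$, normalize to $\sigma=\mathrm{id}$, record signs $\varepsilon(s,s')$ on non-commuting pairs, propagate and then induct on length) is a reasonable shape, and as a side remark you do not actually need Dyer to identify $\aut(\widehat\Gamma)$ with $\aut((W,\leq))$: one containment is the transitive-closure observation, the claimed generators clearly lie in $\aut(\widehat\Gamma)$, so proving the result for $\aut((W,\leq))$ yields it for $\aut(\widehat\Gamma)$ for free. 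However, the two steps that carry all the weight are asserted rather than proved, and the specific mechanism you suggest for the first is insufficient.

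The propagation $\varepsilon(s,s')=\varepsilon(s',s'')$ cannot be extracted from rank-$3$ elements together with the covers of rank-$2$ elements, as you propose. Already in $\mathfrak{S}_4$ with $(s,s',s'')=(s_1,s_2,s_3)$: the bijection fixing everything of length $\le 1$, fixing $s_1s_2,\,s_2s_1,\,s_1s_3,\,s_1s_2s_1,\,s_2s_3s_2$, and swapping $s_2s_3\leftrightarrow s_3s_2$, $s_1s_2s_3\leftrightarrow s_1s_3s_2$, $s_2s_1s_3\leftrightarrow s_3s_2s_1$ preserves every edge of $\widehat\Gamma$ among elements of length $\le 3$. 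The contradiction only appears at length $4$: the element $3412=s_2s_1s_3s_2$ is the unique length-$4$ element lying above both $s_1s_2s_1$ and $s_2s_3s_2$, hence must be fixed, yet it covers $s_2s_1s_3$ and not $s_3s_2s_1$, ruling out the swap. So the step is true but needs rank-$4$ (or cleverer) input. Similarly, the concluding ``length induction determines $\varphi(w)$ uniquely'' is not routine: distinct elements can share the same lower Bruhat-graph neighborhood (e.g.\ $s_1s_2$ and $s_2s_1$ in $\mathfrak{S}_3$), and butterfly configurations $x_1,x_2\lessdot y_1,y_2$ occur at every rank, so fixing everything below rank $k$ does not automatically pin down rank $k$. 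You would need to argue — for instance via Dyer's result that butterflies live inside cosets of dihedral reflection subgroups, tying each higher-rank ambiguity back down to the rank-$2$ signs already determined — that no new independent sign choices arise; that argument is the real content and is missing from the sketch.
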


In this paper we study the much richer sets of automorphisms of $\Gamma$ and particularly of its subgraphs $\Gamma(u,v)$. The simplest automorphisms of $\Gamma$ not coming from automorphisms of $\widehat{\Gamma}$ follow from the Lifting Property:

\begin{prop}
\label{prop:right-left-multiplication-give-autos}
Let $u,v \in W$ with $u \leq v$ and suppose $s \in D_L(v) \setminus D_L(u)$, then left multiplication $L_s: W \to W$ by $s$ restricts to a graph automorphism of $\Gamma(u,v)$. Similarly, if $s \in D_R(v) \setminus D_R(u)$, then right multiplication $R_s: W \to W$ by $s$ restricts to a graph automorphism of $\Gamma(u,v)$.
\end{prop}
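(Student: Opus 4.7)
The plan is to separate the claim into two independent observations: (i) that $L_s$ preserves the edge set of the full graph $\Gamma$ on $W$, and (ii) that $L_s$ preserves $[u,v]$ as a set. Since $L_s$ is an involution, (ii) is equivalent to $L_s([u,v]) \subseteq [u,v]$. The right-multiplication statement then follows by the exactly analogous argument (using the right-descent version of the Lifting Property).

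For (i), recall that the reflection set $T$ is closed under $W$-conjugation, so the edges of $\Gamma$ are exactly the pairs $\{x,y\}$ with $x^{-1}y \in T$. Since $(sx)^{-1}(sy)=x^{-1}y$, left multiplication by $s$ (indeed by any element of $W$) carries edges of $\Gamma$ to edges of $\Gamma$. So the substantive content lies entirely in (ii).

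For (ii), fix $w \in [u,v]$ and split on whether $s \in D_L(w)$. If $s \in D_L(w)$, then $sw < w \le v$ gives the upper bound; on the other hand $s \in D_L(w)\setminus D_L(u)$ (and $u\ne w$, since $s$ distinguishes their descent sets), so applying the Lifting Property (Proposition~\ref{prop:lifting-property}) to $u < w$ yields $u < sw$. If instead $s \notin D_L(w)$, then $u \le w < sw$ gives the lower bound; applying the Lifting Property to $w \le v$ with $s \in D_L(v)\setminus D_L(w)$ yields $sw < v$. In either case $sw \in [u,v]$, as required.

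There is no substantial obstacle: the argument is a direct two-case application of the Lifting Property combined with the observation that $T$ is conjugation-invariant. The only mild subtlety is remembering to handle the two descent cases separately, and to note that $s \in D_L(w)\setminus D_L(u)$ forces $u \ne w$ so that the strict form of the Lifting Property applies.
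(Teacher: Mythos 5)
Your proof is correct and takes essentially the same route as the paper's: observe that $L_s$ is automatically a $\Gamma$-automorphism of all of $W$ (because edges are determined by $x^{-1}y\in T$, invariant under left multiplication), and then use the Lifting Property in a two-case split on whether $s\in D_L(w)$ to show $[u,v]$ is preserved. The paper is slightly more terse about why $L_s$ and $R_s$ are automorphisms of the full $\Gamma$, appealing to the fact that $\Gamma$ can be defined by either left or right reflection multiplication, but the content is identical.
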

\begin{proof}
As left and right multiplication commute, and as $\Gamma$ may equivalently be defined either in terms of left or right multiplication by reflections, it is clear that $L_s,R_s$ define automorphisms of $\Gamma$. We just need to check that, under the hypotheses, they preserve the Bruhat interval $[u,v]$.

Suppose $s \in D_L(v) \setminus D_L(u)$ and $x \in [u,v]$. If $s \in D_L(x)$, then by the Lifting Property (Proposition~\ref{prop:lifting-property}) we have $u \leq sx$, thus, since $sx<x\leq v$ we have $sx \in [u,v]$. Similarly, if $s \not \in D_L(x)$, then the Lifting Property implies that $u\leq x<sx \leq v$, so again $sx \in [u,v]$. The case of right multiplication is exactly analogous.
\end{proof}

We say the element $w\in W$ has a \emph{disjoint support decomposition} if it may be expressed as a nontrivial product $w=w'w''$ with $\supp(w') \cap \supp(w'') = \emptyset$ (note that, in this case, we have $w'=w^J$ and $w''=w_J$ with $J=\supp(w'')$). 

\begin{prop}
\label{prop:disjoint-support}
Let $w=w'w''$ be a disjoint support decomposition, then:
\begin{align*}
    \widehat{\Gamma}(w) &\cong \widehat{\Gamma}(w') \times \widehat{\Gamma}(w''), \\
    \Gamma(w) &\cong \Gamma(w') \times \Gamma(w''),\\
    [e,w] &\cong [e,w'] \times [e,w'']. 
\end{align*}
In each case, the isomorphism is given by group multiplication.
\end{prop}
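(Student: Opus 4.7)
The plan is to prove all three isomorphisms at once by analyzing the multiplication map $\phi\colon [e,w']\times[e,w'']\to[e,w]$, $(x,y)\mapsto xy$. Setting $J:=\supp(w'')$, disjointness of supports gives $\supp(w^J)\cap J=\emptyset\subseteq D_L(w_J)$, so $w=w'w''$ is a BP-decomposition, and Proposition~\ref{prop:BP-equivalences}(2) makes $\phi$ a bijection onto $[e,w]$ after restricting the first factor to $[e,w']\cap W^J$. Since every $x\leq w'$ has $\supp(x)\subseteq\supp(w')$ by the Subword Property and $\supp(w')\cap J=\emptyset$, we have $D_R(x)\cap J=\emptyset$, i.e., $x\in W^J$ automatically, so $\phi$ is already a bijection on all of $[e,w']\times[e,w'']$.

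For the poset isomorphism, fix reduced words $\bs{r}$ for $w'$ and $\bs{s}$ for $w''$ in the respective disjoint alphabets; then $\bs{r}\bs{s}$ is a reduced word for $w$, and any reduced subword of $\bs{r}\bs{s}$ splits uniquely into a subword of $\bs{r}$ followed by a subword of $\bs{s}$. The Subword Property then yields $xy\leq x'y'$ if and only if $x\leq x'$ and $y\leq y'$.

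The forward direction of the Bruhat graph isomorphism is routine: an edge $(x,y)\sim(xr,y)$ in the product (with $r\in T$) maps to an edge in $\Gamma(w)$ because $xry=xy\cdot(y^{-1}ry)$ and $y^{-1}ry$ is again a reflection; length additivity preserves direction. The crucial reverse direction I would prove as follows: given $z=xy$ and $z'=x'y'$ in $[e,w]$ with $z'=zt$ for some $t\in T$ and $\ell(z')>\ell(z)$, claim $x=x'$ or $y=y'$. Since $\ell(z't)=\ell(z)<\ell(z')$, the Exchange Property (Proposition~\ref{prop:exchange-property}) applied to the reduced expression $\bs{r}_{x'}\bs{s}_{y'}$ of $z'$ yields an expression for $z=z't$ obtained by deleting a single letter. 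If that letter lies in the $A$-part $\bs{r}_{x'}$, the result has the form $\bs{r}_{x'}'\bs{s}_{y'}$ and exhibits $z$ as $a\cdot y'$ for some $a\in W_A\subseteq W^J$; by uniqueness of the parabolic decomposition $z=z^Jz_J$ with $z^J\in W^J$ and $z_J\in W_J$, this forces $y=y'$. Deleting a letter from the $B$-part $\bs{s}_{y'}$ symmetrically gives $x=x'$.

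I expect the reverse graph direction to be the main obstacle, because a reflection edge in $\Gamma(u,v)$ need not be a Hasse cover and so the poset isomorphism alone does not suffice. The Exchange Property is exactly what converts a single reflection multiplication into a single-letter deletion, and disjointness of the two alphabets forces that deletion to be absorbed entirely by one factor. Nothing in the argument is special to any particular Coxeter group.
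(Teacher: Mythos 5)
Your proposal is correct and uses the same toolkit the paper invokes: fix a reduced word for $w$ that concatenates reduced words for $w'$ and $w''$, then apply the Subword Property (for the poset isomorphism and bijectivity on vertices) and the Exchange Property (to show a reflection edge in $\Gamma(w)$ changes only one of the two factors). The BP-decomposition step is a harmless detour for bijectivity, and the final routine observation that the surviving factor conjugate $y^{-1}ty$ (or $x^{-1}tx$) is a reflection of the relevant parabolic subgroup completes the reverse graph direction exactly as the paper's terse proof intends.
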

\begin{proof}
The latter two assertions follow from the first. The first assertion can be easily seen by choosing a reduced word $w=s_{i_1}\cdots s_{i_k} s_{i_{k+1}} \cdots s_{i_{\ell}}$ such that $s_{i_1}\cdots s_{i_k}=w'$ and $s_{i_{k+1}} \cdots s_{i_{\ell}}=w''$ and applying the Subword Property and Exchange Property.
\end{proof}

Proposition~\ref{prop:disjoint-support} implies in particular that when $w=w'w''$ is a disjoint support decomposition, $\Gamma(w),\widehat{\Gamma}(w),$ and $[e,w]$ have automorphisms induced by automorphisms for $w',w''$.

\subsection{Middle multiplication}

A key observation of this work is that, when $w$ admits a particularly nice parabolic decomposition which is sufficiently close to a disjoint support decomposition, $\Gamma(w)$ has additional automorphisms coming from \emph{middle multiplication}. These automorphisms will be key to the classification in Section~\ref{sec:vertex-transitive} of vertex-transitive Bruhat graphs $\Gamma(w)$ in the symmetric group.

\begin{prop}
\label{prop:middle-mult-is-automorphism}
Suppose $w=w^Jw_J$ is a BP-decomposition of $W$ and in addition we have $\supp(w^J) \cap \supp(w_J) =\{s\}$, then the middle multiplication map 
\[
\phi: x \mapsto x^J s x_J, 
\]
is an automorphism of the Bruhat graph $\Gamma(w)$.
\end{prop}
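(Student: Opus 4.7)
The plan is to first verify that $\phi$ is a well-defined involution on $[e,w]$, and then check that it preserves the edge set of $\Gamma(w)$. For well-definedness, the BP bijection (Proposition~\ref{prop:BP-equivalences}) gives every $x \in [e,w]$ a unique parabolic factorization $x = x^J x_J$ with $x^J \leq w^J$ and $x_J \leq w_J$. Combining the support hypothesis $\supp(w^J) \cap \supp(w_J) = \{s\}$ with the BP condition $\supp(w^J) \cap J \subseteq D_L(w_J)$ and the general inclusion $D_L(w_J) \subseteq \supp(w_J)$ yields $\supp(w^J) \cap J = \{s\}$; in particular, $s \in D_L(w_J)$. Since $s \in J$, the product $x^J \cdot (s x_J)$ is itself a parabolic factorization, so $\phi(x)^J = x^J$ and $\phi(x)_J = s x_J$. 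The Lifting Property (Proposition~\ref{prop:lifting-property}) applied in $W_J$ to $s \in D_L(w_J)$ gives $s x_J \leq w_J$, hence $\phi(x) \in [e,w]$ by the BP bijection. The involution property $\phi^2(x) = x^J \cdot s(sx_J) = x$ is immediate.

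For edge preservation, let $(x, y=xt)$ be an edge of $\Gamma(w)$ with $t \in T$. A direct calculation using the parabolic factorizations gives
\[
\phi(x)^{-1}\phi(y) = (x_J^{-1} s x_J) \cdot t \cdot (y_J^{-1} s y_J).
\]
If $t \in W_J$, then $y^J = x^J$ and $y_J = x_J t$, so $y_J^{-1} s y_J = t(x_J^{-1} s x_J)t$ and the right-hand side collapses, using $t^2 = e$ and $(x_J^{-1} s x_J)^2 = e$, to $t$ itself; thus the edge is preserved via the same reflection.

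The harder case is $t \notin W_J$, where the plan rests on the following key lemma: \emph{under the support hypothesis, if $y = xt \in [e,w]$ with $t \in T \setminus W_J$, then $y_J \in \{x_J,\, sx_J\}$.} Granting this, $y_J^{-1} s y_J = x_J^{-1} s x_J$ in either subcase, so the formula collapses to $r_x t r_x$ where $r_x := x_J^{-1} s x_J$ is a reflection. Being a conjugate of $t$ by $r_x$, this is itself a reflection, witnessing the required edge between $\phi(x)$ and $\phi(y)$.

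Establishing the key lemma is the main obstacle. Intuitively, the condition $\supp(w^J) \cap J = \{s\}$ means $s$ is the only $J$-generator appearing in any reduced word for $w^J$, and this should force the ``shift'' $y_J x_J^{-1}$ induced by an inter-coset reflection to lie in $\{e,s\}$. I would attempt to prove the lemma by analyzing the identity $y_J = (y^J)^{-1} x^J \cdot x_J \cdot t$, combining the Exchange Property (Proposition~\ref{prop:exchange-property}) with the subword description of $[e, w^J] \cap W^J$, and likely by induction on $\ell(w_J)$, using that $w_J = s \cdot (sw_J)$ lets one pass to a BP configuration of smaller size while preserving the essential support structure.
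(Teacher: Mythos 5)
Your outline is correct in its global structure, and you have isolated precisely the right structural fact: the case $t\in W_J$ is easy, and the case $t\notin W_J$ reduces to the claim that $y_J\in\{x_J,\,sx_J\}$. The algebraic manipulations surrounding that claim (the formula $\phi(x)^{-1}\phi(y)=(x_J^{-1}sx_J)\,t\,(y_J^{-1}sy_J)$, the collapse to $t$ or to $r_x\,t\,r_x$) are all correct, and the well-definedness argument via $\supp(w^J)\cap J=\{s\}$ and the Lifting Property is fine. But the key lemma is left as a genuine gap; ``I would attempt\dots likely by induction on $\ell(w_J)$'' does not constitute a proof, and the proposed induction is neither needed nor obviously productive, since passing from $w_J$ to $sw_J$ does not preserve the BP hypothesis in any clean way.

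The paper's proof of that same fact is a short, direct application of the Exchange Property and needs no induction. Assume WLOG $\ell(y)>\ell(x)$ and choose a reduced word $y=s_{i_1}\cdots s_{i_\ell}$ with $s_{i_1}\cdots s_{i_k}=y^J$ and $s_{i_{k+1}}\cdots s_{i_\ell}=y_J$. Exchange produces $x$ by deleting one letter $s_{i_j}$. If $j>k$, the deleted letter is inside $y_J$, so the prefix $y^J$ survives and uniqueness of parabolic factorizations forces $x^J=y^J$, contradicting $t\notin W_J$. So $j\le k$, and $x=z'\cdot y_J$ where $z'=s_{i_1}\cdots\widehat{s_{i_j}}\cdots s_{i_k}$. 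If $z'\in W^J$ then $x_J=y_J$ and you are in the easy subcase. Otherwise $z'$ has a right descent in $J$; but $\supp(z')\subseteq\supp(w^J)$ and $\supp(w^J)\cap J=\{s\}$, so this descent must be $s$. Hence $x=(z's)(sy_J)$ is the parabolic factorization, giving $x_J=sy_J$, i.e.\ $y_J=sx_J$. This is exactly the lemma you needed; with it supplied, the rest of your argument goes through as written.
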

\begin{proof}
Suppose without loss of generality that $J=\supp(w_J)$.

By Proposition~\ref{prop:BP-equivalences}, we have that $y \mapsto (y^J,y_J)$ is a bijection between $[e,w]$ and $([e,w^J]\cap W^J) \times [e,w_J]$. We need to verify that if $x=yt$ for $x,y \in [e,w]$ and $t \in T$ then $\phi(x)=\phi(y)t'$ for some $t' \in T$.

If $x_J=y_J$, then
\begin{align*}
    \phi(x)&=x^Jsy_J=\phi(x)y_J^{-1}sy_J \\
    \phi(y)&=\phi(y)y_J^{-1}sy_J.
\end{align*}
Thus, since $\phi$ acts on $x,y$ just by right multiplication, we are done. The case $x^J=y^J$ is similar.

Thus we are left with the case where $x^J \neq y^J$ and $x_J \neq y_J$. Suppose without loss of generality that $\ell(y)>\ell(x)$. Fix some reduced word $y=s_{i_1} \cdots s_{i_{\ell}}$ such that $s_{i_1} \cdots s_{i_k}=y^J$ and $s_{i_{k+1}} \cdots s_{i_{\ell}}=y_J$. By the Exchange Property (Proposition~\ref{prop:exchange-property}), we have a not-necessarily-reduced word $x=s_{i_1} \cdots \widehat{s_{i_j}} \cdots s_{i_{\ell}}$ for some $1 \leq j \leq \ell$. If $j > k$, so that we are deleting a letter from $y_J$, then $z=s_{i_k} \cdots \widehat{s_{i_j}} \cdots s_{i_{\ell}}$ still lies in $W_J$, and so uniqueness of parabolic decompositions implies that $z=x_J$ and $x^J=y^J$, a contradiction. Thus it must be that $j \leq k$. Furthermore, we must have $z'=s_{i_1} \cdots \widehat{s_{i_j}} \cdots s_{i_{k}} \not \in W^J$. By definition, this means that $z'$ has some right descent from $J$, and since $\supp(z') \subseteq \supp(y^J) \subseteq \supp(w^J)$ and $\supp(w^J) \cap J = \{s\}$, it must be that $s \in D_R(z')$. This implies that $(x^J,x_J)=(z's,sy_J)$. Now,
\begin{align*}
    \phi(x)&=x^Jsx_J=z'sy_J \\
    \phi(y)&=y^Jsy_J.
\end{align*}
So $\phi(x)\phi(y)^{-1}=z'(y^J)^{-1}$. We know $z'=y^Jt''$ for some $t'' \in T$ by the Exchange Property, so this expression becomes 
\[
y^Jt''(y^J)^{-1} \in T.
\]
\end{proof}

\section{Vertex transitive Bruhat graphs}
\label{sec:vertex-transitive}
To prove Theorem~\ref{thm:vertex-transitive}, we introduce another condition:
\begin{itemize}
    \item[(VT3)] \textit{the element $w$ is almost-polished},
\end{itemize}
and show its equivalence to (VT1) and (VT2).
\begin{defin}\label{def:almost-polished}
Let $(W,S)$ be a finite Coxeter system. An element $w\in W$ is \emph{almost-polished} if there exists pairwise disjoint subsets $S_1,\ldots,S_k\subset S$ such that each $S_i$ is a connected subset of the Dynkin diagram and coverings $S_i=J_i\cup J_i'$ with $i=1,\ldots,k$ so that
\[w=\prod_{i=1}^k w_0(J_i)w_0(J_i\cap J_i')w_0(J_i').\]
\end{defin}
Note that if we reorder the $S_i$'s, a possibly different almost-polished element can be obtained. Unlike \emph{polished elements} \cite{self-dual}, for almost-polished elements, we do not require that $J_i\cap J_i'$ is totally disconnected.  

We then prove Theorem~\ref{thm:vertex-transitive} via (VT1)$\Rightarrow$(VT2)$\Rightarrow$(VT3)$\Rightarrow$(VT1).

\subsection{(VT1)$\Rightarrow$(VT2)}
For a simple graph $G$ and $v\in V(G)$, let $N_d(v)$ be the set of vertices with distance exactly $d$ from $v$. In particular, $N_0(v)=\{v\}$ and for the Bruhat graph $\Gamma(w)$, $N_1(w)=\{wt_{ij}\:|\: w(i)>w(j)\}$ where $t_{ij}$ is the transposition $(i\ j)$ with $i<j$. For $\varphi\in\aut(G)$, it is clear that if $\varphi(u)=v$, then $\varphi(N_d(u))=N_d(v)$ for all $d=1,2,\ldots$. 

We start with a simple lemma concerning $\aut(\Gamma(w))$, which intuitively says that $\varphi\in\aut(\Gamma(w))$ ``preserves triangles" in $N_1(w)$ and $N_1(e)$. 
\begin{lemma}\label{lem:preserve-triangle}
Let $\varphi\in\aut(\Gamma(w))$ such that $\varphi(w)=e$.
\begin{enumerate}
\item If $i<j<k$ and $w(i)>w(j)>w(k)$, then for some $a<b<c$ with $w\geq t_{ac}$, \[\varphi(\{wt_{ij},wt_{ik},wt_{jk}\})=\{t_{ab},t_{ac},t_{bc}\}.\]
\item If $a<b<c$ and $w\geq t_{ac}$, then for some $i<j<k$ and $w(i)>w(j)>w(k)$,
\[\varphi^{-1}\big(\{t_{ab},t_{ac},t_{bc}\}\big)=\{wt_{ij},wt_{ik},wt_{jk}\}.\]
\end{enumerate}
\end{lemma}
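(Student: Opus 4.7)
The plan is to prove (1) and (2) via a common graph-theoretic characterization of the distinguished triples in terms of common neighbors within $\Gamma(e,w)$. Since $\varphi(w)=e$ and $\varphi$ preserves distance, $\varphi$ restricts to a bijection $N_1(w)\to N_1(e)$ and carries common neighbors of triples to common neighbors of their images, so the conclusion reduces to two symmetric characterization statements, one in $N_1(w)$ and one in $N_1(e)$.

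The core calculation takes place in $\mathfrak{S}_n$ and is independent of $w$: the product of two distinct transpositions is always a $3$-cycle (when they overlap in exactly one index) or a disjoint double transposition. A short case analysis then shows that a $3$-cycle $(a\ b\ c)$ is adjacent in $\Gamma$ to exactly the three reflections $t_{ab},t_{ac},t_{bc}$, a double transposition is adjacent to only two reflections, and no other non-identity element of $\mathfrak{S}_n$ is adjacent to three or more reflections. This immediately yields the $N_1(e)$-characterization: three reflections in $[e,w]$ share a common neighbor in $[e,w]\setminus\{e\}$ if and only if they have the form $\{t_{ab},t_{ac},t_{bc}\}$ for some $a<b<c$. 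The analogous $N_1(w)$-characterization follows by the substitution $x=w^{-1}v$: a common neighbor $v=wx$ of $wt_1,wt_2,wt_3$ other than $w$ corresponds to an element $x$ with each $t_\ell x$ a reflection, i.e., $x$ adjacent to each $t_\ell$ in $\Gamma$; the same count forces $x$ to be a $3$-cycle on three indices $\{i,j,k\}$ and $\{t_1,t_2,t_3\}=\{t_{ij},t_{ik},t_{jk}\}$.

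To deduce (1), I would verify that $w(i\ j\ k)\in[e,w]$: the chain $w>wt_{ij}>wt_{ij}t_{jk}=w(i\ j\ k)$ is strictly length-decreasing, because $w(i)>w(j)$ and $w(i)>w(k)$ make both steps remove right inversions. Hence $w(i\ j\ k)$ is a common neighbor of the $321$-triple in $[e,w]\setminus\{w\}$, and its image $\varphi(w(i\ j\ k))$ is a common neighbor of the image triple in $[e,w]\setminus\{e\}$; by the characterization, the image takes the form $\{t_{ab},t_{ac},t_{bc}\}$, with $w\geq t_{ac}$ automatic since $t_{ac}\in N_1(e)\subseteq[e,w]$. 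For (2), a short Ehresmann rank-matrix comparison shows $(a\ b\ c)\leq t_{ac}\leq w$, so $(a\ b\ c)$ serves as a common neighbor of $\{t_{ab},t_{ac},t_{bc}\}$ in $[e,w]\setminus\{e\}$; applying the argument to $\varphi^{-1}$ forces the preimage triple to be of the form $\{wt_{ij},wt_{ik},wt_{jk}\}$, and the $321$-pattern $w(i)>w(j)>w(k)$ is forced by $wt_\ell\in N_1(w)$.

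The main obstacle is verifying the exactness of the adjacency classification in the second paragraph, specifically ensuring that no element of $\mathfrak{S}_n$ other than $e$ and $3$-cycles can be adjacent to as many as three reflections in $\Gamma$. This reduces to a finite case analysis of two-transposition products and their reflection-neighbor counts, which is routine but requires care to be complete.
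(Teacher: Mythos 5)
Your proof is correct and follows essentially the same route as the paper: both arguments use that $\varphi$ preserves the distance layers $N_1, N_2$ and exploit the structure of products of two transpositions in $\mathfrak{S}_n$ to pin down how $\varphi$ must act on the triple. The only minor variation is that the paper exhibits \emph{two} common neighbors of $\{wt_{ij},wt_{ik},wt_{jk}\}$ in $N_2(w)$ and forces the image index-pairs to pairwise intersect, whereas you exhibit a single common neighbor (the $3$-cycle $w(i\,j\,k)$, which you explicitly verify lies in $[e,w]$, a point the paper leaves implicit) and then classify which non-identity permutations can be adjacent to three distinct transpositions; both versions are valid.
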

\begin{proof}
Recall that for $a<b<c$, $t_{ac}\geq t_{ab}$ and $t_{ac}\geq t_{bc}$.

For (1), we notice that the three elements $wt_{ij},wt_{ik},wt_{jk}\in N_1(w)$ have two common neighbors in $wt_{ij}t_{jk}=wt_{ik}t_{ij}, wt_{jk}t_{ij}=wt_{ik}t_{jk}\in N_2(w)$. In order for $\varphi(wt_{ij})=t_{x_1y_1}$, $\varphi(wt_{ik})=t_{x_2y_2}$ and $\varphi(wt_{jk})=t_{x_3y_3}$ to have two common neighbors in $N_2(e)$, $\{x_1,y_1\}$, $\{x_2,y_2\}$, $\{x_3,y_3\}$ must pairwise intersect, so they must be $\{a,b\}$, $\{a,c\}$, $\{b,c\}$ in some order, for some $a<b<c$. For (2), the exact same reasoning works by analysing $N_2(w)$. 
\end{proof}

\begin{proof}[Proof of implication (VT1)$\Rightarrow$(VT2)]
Assume $\Gamma(w)$ is vertex-transitive. In particular, it is a regular graph so Theorem~\ref{thm:smoothness} implies that $w$ avoids $3412$ and $4231$. The other two patterns $34521$ and $54123$ are inverse to each other. Since $\Gamma(w)$ and $\Gamma(w^{-1})$ are isomorphic, it suffices to show that $w$ avoids $34521$. Now for the sake of contradiction, let $w$ contain the pattern $34521$ at indices $a_1<\cdots<a_5$, and let $\varphi\in\aut(\Gamma(w))$ such that $\varphi(w)=e$.

Let $\varphi(wt_{a_4a_5})=t_{xy}$ where $x<y$. Note that $w$ contains the pattern $321$ at indices $a_i<a_4<a_5$ for $i=1,2,3$. By Lemma~\ref{lem:preserve-triangle}, let $\varphi(\{wt_{a_ia_4},wt_{a_ia_5},wt_{a_4a_5}\})$ be the three transpositions with indices in $x,y$ and $c_i$. We now view the indices $a_1,a_2,a_3$ and $c_1,c_2,c_3$ with symmetric roles and divide into the following cases.

Case 1: $x<c_1<c_2<y$. Since $w\geq t_{xy}>t_{xc_2}$, by Lemma~\ref{lem:preserve-triangle}(2), $\varphi^{-1}(\{t_{xc_1},t_{xc_2},t_{c_1c_2}\})=\{wt_{ij},wt_{ik},wt_{jk}\}$ for some $i<j<k$. We know that $\varphi^{-1}(t_{xc_1})=wt_{a_1a_4}$ or $wt_{a_1a_5}$ and $\varphi^{-1}(t_{xc_2})=wt_{a_2a_4}$ or $w_{t_{a_2a_5}}$. In order for them to have common indices, we must have either $(i,j,k)=(a_1,a_2,a_4)$ or $(i,j,k)=(a_1,a_2,a_5)$. But $wt_{a_1a_2}>w$ is not a vertex in $\Gamma$, resulting in a contradiction.

Case 2: $x<c_1<y<c_2$. The positioning of $c_2$ implies $w\geq t_{xc_2}\geq t_{c_1c_2}$. By Lemma~\ref{lem:preserve-triangle}(2), $\varphi^{-1}(\{t_{c_1y},t_{c_1c_2},t_{yc_2}\})=\{wt_{ij},wt_{ik},wt_{jk}\}$ for some $i<j<k$. We know that $\varphi^{-1}(t_{c_1y})=wt_{a_1a_4}$ or $wt_{a_1a_5}$ and $\varphi^{-1}(t_{yc_2})=wt_{a_2a_4}$ or $w_{t_{a_2a_5}}$. This leads to the same contradiction as above due to $wt_{a_1a_2}>w$.

By symmetry, Case 1 and Case 2 together cover all the situations where one of $\{c_1,c_2,c_3\}$ lie in the open interval $(x,y)$. We can then assume that $c_i<x$ or $c_i>y$ for $i=1,2,3$. By symmetry and the pigeonhole principle, we can assume that at least two of $\{c_1,c_2,c_3\}$  are greater than $y$.

Case 3: $x<y<c_1<c_2$. Again, we have $w>t_{yc_2}$. By Lemma~\ref{lem:preserve-triangle}(2), we consider the indices $y<c_1<c_2$ and apply the exact same arguments as in Case 1 and Case 2 to $\varphi^{-1}(\{t_{c_1y},t_{yc_2},t_{c_1c_2}\})=\{wt_{ij},wt_{ik},wt_{jk}\}$ to derive a contradiction.
\end{proof}

\subsection{(VT2)$\Rightarrow$(VT3)}
Throughout this section, assume $w\in \mathfrak{S}_n$ avoids the four permutations, $3412$, $4231$, $34521$ and 54123, in (VT2). We view permutations as their permutation matrix, with indices increasing from left to right and from top to bottom. We apply the standard decomposition techniques of smooth permutations, as in \cite{Gasharov,Oh-Postnikov-Yoo} and especially Section 3.2 of \cite{self-dual}, of which we follow the notations. Our goal is to write $w$ as a product of $w_0(K)$'s, for subsets $K\subset S=\{s_1,\ldots,s_{n-1}\}$, via induction.

Consider the top-left corner region
\[C=\{(a,w(a))\:|\: 1\leq a\leq w^{-1}(1),\ 1\leq w(a)\leq w(1)\}\]
which is rectangle formed by $(1,w(1))$ and $(w^{-1}(1),1)$. Let $C=\{(c_1,w(c_1)),\ldots,(c_t,w(c_t))\}$ where $c_1<\cdots<c_t$. Let $K_1=\{s_1,\ldots,s_{t-1}\}$. Since $w$ avoids $4231$, $w(c_1)>\cdots>w(c_t)$. Also consider the rectangle to the right of $C$ and on the bottom of $C$:
\begin{align*}
R=&\{(a,w(a))\:|\: 1<a<w^{-1}(1),\ w(a)>w(1)\}, \\
L=&\{(a,w(a))\:|\: 1<w(a)<w(1),\ a>w^{-1}(1)\}.
\end{align*}
The positioning of these regions can be seen in Figure~\ref{fig:smooth-permutation}.
\begin{figure}[h!]
\centering
\begin{tikzpicture}[scale=0.4]
\draw(0,0)--(7,0);
\draw(0,0)--(0,-7);
\draw(0,-4)--(7,-4);
\draw(4,0)--(4,-7);
\node at (4,0) {$\bullet$};
\node at (0,-4) {$\bullet$};
\node at (2,-5.5) {$L$};
\node at (5.5,-2) {$R$};
\node at (1,-1) {$C$};
\node at (2.4,-2.4) {$\bullet$};
\node at (3.2,-1.2) {$\bullet$};
\node at (1.2,-3.2) {$\bullet$};
\node[left] at (0,0) {$c_1$};
\node[left] at (0,-1.2) {$c_2$};
\node[left] at (0,-2.4) {$\vdots$};
\node[left] at (0,-4) {$c_t$};
\end{tikzpicture}
\caption{Analyzing the structure of smooth permutations.}
\label{fig:smooth-permutation}
\end{figure}
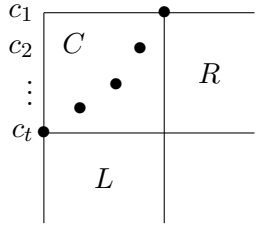
Since $w$ avoids $3412$, at least one of $R$ and $L$ is empty. If both are empty, we say that $w$ is of type n with parameter $K_2$. If $R$ is nonempty, we say that $w$ is of type r; and if $L$ is nonempty, we say that $w$ is of type l. Notice that if $w$ is of type n, $w':=w\cdot w_0(K_1)=w_0(K_1)\cdot w$ also avoids the four patterns in (VT2), with $\supp(w')\subset S\setminus K_1$. We call $w'$ the \emph{one-step reduction} of $w$ and we can then straightforwardly deduce that $w$ is almost-polished by $w'$ being almost-polished. We will come back to this later.

The case of type l and type r are dual to each other by taking inverses. So for now, we assume that $w$ is of type r, i.e. $L=\emptyset$, and further study its structure. Divide $R=R_1\sqcup\cdots\sqcup R_{t-1}$ where $R_i=\{(a,w(a))\:|\: c_i<a<c_{i+1}\}$. If $R_1=\cdots=R_{t-2}=\emptyset$, then $R_{t-1}\neq\emptyset$ and we say that $w$ is of type r0 with parameter $K_1$ and let $w'=w_0(K_1)\cdot w$ be the \emph{one-step reduction} of $w$. 

Here we use the condition that $w$ avoids $34521$ and $54123$. Since $w$ avoids $34521$, the coordinates in $R_1\cup\cdots\cup R_{t-2}$ must be decreasing, i.e. from top right to bottom left. At the same time, since $w$ avoids $4231$, we have that at most one of $R_1,\ldots,R_{t-2}$ is nonempty. If $R_p\neq\emptyset$, let $I_1=\{p+1,p+2,\ldots,t-1\}\subset K_1$ and say that $w$ is of type r1 with parameter $(K_1,I_1)$. Then let $w'=w_0(I_1)w_0(K_1)w$ be the \emph{one-step reduction} of $w$. Also note that within $R_{t-1}$ (which can be empty or nonempty), there are no coordinates to the left of any coordinates in $R_p$ since $w$ avoids $4231$. A visualization is shown in Figure~\ref{fig:one-step-reduction}.
\begin{figure}[h!]
\centering
\begin{tikzpicture}[scale=0.4]
\node at (4,0) {$\bullet$};
\node at (3.2,-1) {$\bullet$};
\node at (2.4,-2) {$\bullet$};
\node at (1.6,-4) {$\bullet$};
\node at (0.8,-5) {$\bullet$};
\node at (0,-7) {$\bullet$};
\draw(0,0)--(0,-10);
\draw(0,0)--(12,0);
\draw(4,0)--(4,-10);
\draw(0,-7)--(12,-7);
\draw[dashed](3.2,-1)--(12,-1);
\draw[dashed](2.4,-2)--(12,-2);
\draw[dashed](1.6,-4)--(12,-4);
\draw[dashed](0.8,-5)--(12,-5);
\node at (10.5,-0.5) {$R_1{=}\emptyset$};
\node at (10.5,-1.5) {$R_2{=}\emptyset$};
\node at (10.5,-3) {$R_3{\neq}\emptyset$};
\node at (10.5,-4.5) {$R_4{=}\emptyset$};
\node at (10.5,-6) {$R_{t-1}$};
\node at (5,-3.6) {$\bullet$};
\node at (6,-3.2) {$\bullet$};
\node at (7,-2.8) {$\bullet$};
\node at (8,-2.4) {$\bullet$};
\node at (2,-8.5) {$L=\emptyset$};
\draw[dashed](8,-2.4)--(8,-7);
\node at (6,-6) {$\emptyset$};
\end{tikzpicture}
\qquad\qquad\qquad
\begin{tikzpicture}[scale=0.4]
\node at (0,0) {$\bullet$};
\node at (0.8,-1) {$\bullet$};
\node at (1.6,-2) {$\bullet$};
\node at (4.0,-4) {$\bullet$};
\node at (3.2,-5) {$\bullet$};
\node at (2.4,-7) {$\bullet$};
\draw(0,0)--(0,-10);
\draw(0,0)--(12,0);
\draw(4,0)--(4,-10);
\draw(0,-7)--(12,-7);
\draw[dashed](1.6,-2)--(12,-2);
\draw[dashed](1.6,-2)--(1.6,-10);
\node at (5,-3.6) {$\bullet$};
\node at (6,-3.2) {$\bullet$};
\node at (7,-2.8) {$\bullet$};
\node at (8,-2.4) {$\bullet$};
\draw[dashed](3.2,-5)--(12,-5);
\draw[dashed](8,-2.4)--(8,-10);
\node at (2.8,-8.5) {$\emptyset$};
\node at (10,-3.5) {$\emptyset$};
\end{tikzpicture}
\caption{The permutation $w$ on the left and its one-step reduction $w'$ on the right, with type r1 and parameters $K_1=\{s_1,\ldots,s_5.\}$, $I_1=\{s_4,s_5\}$.}
\label{fig:one-step-reduction}
\end{figure}
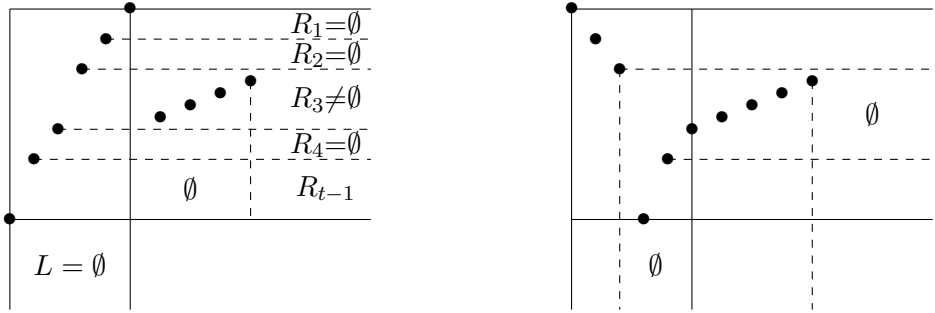
As a summary, the types of $w$ can be n, l0, l1, r0 or r1. 

\begin{lemma}\label{lem:one-step-reduction-avoid-patterns}
Let $w$ avoid $3412$, $4231$, $34521$ and $54123$ as above. Then the one-step reduction $w'$ of $w$ also avoids these four patterns.
\end{lemma}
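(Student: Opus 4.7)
The plan is a case analysis on the type of $w$, which is by construction one of n, r0, r1, l0, or l1. The pattern set $\{3412, 4231, 34521, 54123\}$ is closed under inversion (since $3412$ and $4231$ are involutions, while $34521$ and $54123$ are mutually inverse), and the one-step reduction intertwines with inversion: if $w$ is of type l0 (resp.\ l1), then $w^{-1}$ is of type r0 (resp.\ r1) and $(w')^{-1}$ coincides with the one-step reduction of $w^{-1}$. Hence it suffices to handle the three types n, r0, and r1.

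For each remaining type, the structural analysis preceding the lemma together with a short dot-counting argument on the columns $\{2,\ldots,w(1)-1\}$ (whose dots, by $L=\emptyset$ and the fact that the $R_i$-regions lie in columns strictly greater than $w(1)$, must be contained in the corner rows $\{c_2,\ldots,c_{t-1}\}$) forces $w(1)=t$ and $w(c_i)=t+1-i$ for $i=1,\ldots,t$. Setting $p=t$ for type n, $p=t-1$ for type r0, and $p$ as given for type r1, one then computes directly that $w'$ decomposes as a direct sum: the identity permutation on $\{1,\ldots,p\}$ together with a permutation $\sigma'$ of $\{p+1,\ldots,n\}$. Moreover, letting $\phi \colon \{p+1,\ldots,n\}\to\{1,\ldots,n\}\setminus\{w(1),\ldots,w(p)\}$ be the unique order-preserving bijection, one verifies that $\phi\circ\sigma' = w|_{\{p+1,\ldots,n\}}$, so $\sigma'$ and $w|_{\{p+1,\ldots,n\}}$ share the same pattern structure.

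Given this description, the conclusion follows quickly. Any occurrence of one of the four patterns in $w'$ must avoid the identity block $\{1,\ldots,p\}$ entirely: if some position of the occurrence lay in this block, then the minimum value of the occurrence would be attained at its leftmost position (since identity-block values are strictly smaller than all other values of $w'$ and are increasing within the block), contradicting the fact that none of $3412$, $4231$, $34521$, $54123$ attains its minimum at position $1$. Hence any pattern occurrence in $w'$ would transport to the same pattern in $\sigma'$, thence in $w|_{\{p+1,\ldots,n\}}$ via $\phi$, and finally in $w$ itself, contradicting the hypothesis. The principal obstacle in carrying out this plan is the explicit verification of the direct-sum decomposition of $w'$ in type r1, where the combined action of $w_0(I_1)$ and $w_0(K_1)$ on the dots of the corner $C$ and of the block $R_p$ must be tracked in detail; this calculation goes through thanks to the $4231$- and $34521$-avoidance constraints on the placements of $R_p$ and $R_{t-1}$.
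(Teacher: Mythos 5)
Your proposal is correct and follows essentially the same approach as the paper's proof: observe that $w'$ fixes $\{1,\ldots,p\}$ pointwise, argue that consequently none of those positions can participate in an occurrence of the four patterns, and observe that the restriction to $\{p+1,\ldots,n\}$ reproduces the pattern of $w$ on those positions. Your write-up is more detailed than the paper's terse three-sentence argument (in particular, you make explicit the inversion-duality reduction to types n/r0/r1 and the order-preserving bijection $\phi$ verifying that $w'$ and $w$ agree pattern-wise on $\{p+1,\ldots,n\}$), but the underlying idea is the same.
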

\begin{proof}
We first deal with the critical case where $w$ is of type r1 (or l1) with parameters $K_1=\{s_1,\ldots,s_{t-1}\}$, $I_1=\{p+1,\ldots,t-1\}$ as above. Since $w'(1)=1,\ldots,w'(p)=p$, these indices cannot be involved in any of the four patterns of interest. At the same time, $w'$ restricted to the last $n-p$ indices equals $w$ restricted to the same indices, which avoids these four patterns. As a result, $w'$ avoid these patterns as well. The cases n, r0 and l0 follow from the same arguments.
\end{proof}

The one-step reduction $w'$ lives in a strictly smaller parabolic subgroup of $\mathfrak{S}_n$. And Lemma~\ref{lem:one-step-reduction-avoid-patterns} allows us to continue the reduction. In particular, if $w$ is of type n, l0 or r0, then $\supp(w')\subset S\setminus K_1$, and the next step of reduction can then be analyzed from scratch.

\begin{lemma}\label{lem:one-step-after-r1}
Let $w$ be of type r1 with one-step reduction $w'$ and parameters $K_1$ and $I_1$. Then $w'$ can be of type n, l0, r0 with parameter $K_2$, or l1 with parameters $K_2$ and $I_2$, where $I_1=K_1\cap K_2$ and there are no edges between $I_1$ and $I_2$ in the Dynkin diagram of $\mathfrak{S}_n$.
\end{lemma}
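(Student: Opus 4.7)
The plan is first to compute $w' = w_0(I_1)w_0(K_1)w$ explicitly, then to analyze its new top-left corner $C'$ and the divisions $R', L'$, and finally in the type l1 subcase to extract $I_2$ via inversion duality with the already-understood type r1.

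First I would write down $w'$ coordinate by coordinate. The hypothesis $R_1 = \cdots = R_{p-1} = \emptyset$ gives $c_i = i$ for $i \leq p$. Let $r = |R_p|$ and $s = |R_{t-1}|$, with $R_p$-values $v_1 > \cdots > v_r > t$ and $R_{t-1}$-values $u_1 > \cdots > u_s$; by $4231$-avoidance of $w$, all $u_i > v_1$. The map $w_0(K_1)$ reverses values $1,\ldots,t$ (turning the antidiagonal of $C$ into a diagonal) and $w_0(I_1)$ reverses values $p+1,\ldots,t$, so that $w'(i) = i$ for $i \leq p$, $w'(c_{p+j}) = t+1-j$ for $1 \leq j \leq t-p$, and $w'(a) = w(a)$ otherwise, with $c_{p+1} = p+r+1$ and $c_t = t+r+s$.

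Viewing $w'$ as a permutation of $\{p+1,\ldots,n\}$ so that $w'(p+1) = v_1$ and $(w')^{-1}(p+1) = c_t$, direct inspection of the rectangle $[p+1,c_t] \times [p+1,v_1]$ shows that $C'$ contains precisely the $R_p$-positions and the positions $c_{p+1},\ldots,c_t$ (the $R_{t-1}$-positions are excluded since $u_i > v_1$). Hence $|C'| = r + (t-p)$, giving $K_2 = \{s_{p+1},\ldots,s_{t+r-1}\}$ and
\[
K_1 \cap K_2 = \{s_{p+1},\ldots,s_{t-1}\} = I_1.
\]
Listing the $C'$-positions as $c'_1 < \cdots < c'_{t'}$ with $t' = t+r-p$, the only jump is from $c'_{t'-1} = c_{t-1}$ to $c'_{t'} = c_t$. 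Any position in $(p+1,c_t)$ with $w'$-value exceeding $v_1$ is an $R_{t-1}$-position and therefore lies in this last interval, so if $R' \neq \emptyset$ then only $R'_{t'-1}$ is nonempty and $w'$ is of type r0 (not r1). Combined with the $3412$-avoidance of $w'$ from Lemma~\ref{lem:one-step-reduction-avoid-patterns}, which precludes $R'$ and $L'$ from being simultaneously nonempty, $w'$ must be of type n, r0, l0, or l1.

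For the l1 subcase I use inversion duality: $w'$ is type l1 iff $(w')^{-1}$ is type r1, and the parameter $I_2$ for $w'$ equals the ``$I$'' parameter of $(w')^{-1}$. In this subcase $R_{t-1} = \emptyset$, so $s = 0$ and the positions in $C'$ are the consecutive integers $p+1,\ldots,t+r$. Swapping these points, the corner of $(w')^{-1}$ has positions $p+1,\,p+2,\,\ldots,\,t,\,v_r,\,v_{r-1},\,\ldots,\,v_1$, the first $t-p$ of which are consecutive. Hence any nonempty middle piece of $(w')^{-1}$'s $R$-decomposition occurs at index $p'' \geq t - p$, and translating back to absolute Dynkin labels (using that $(w')^{-1}$'s $K$ equals $\{s_{p+1},\ldots,s_{t+r-1}\}$ and its $I$ equals $\{s_{p+p''+1},\ldots,s_{t+r-1}\}$) yields $I_2 \subseteq \{s_{t+1},\ldots,s_{t+r-1}\}$. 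Since $I_1 \subseteq \{s_{p+1},\ldots,s_{t-1}\}$, indices in $I_1$ are at most $t-1$ and those in $I_2$ are at least $t+1$, so no edge of the path-type Dynkin diagram of $\mathfrak{S}_n$ connects $I_1$ and $I_2$.

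The main obstacle is the careful bookkeeping in the last step: verifying that the first $t-p$ positions of $(w')^{-1}$'s corner are consecutive (so that $p'' \geq t-p$), and correctly translating the new-labeling indices back to absolute Dynkin indices to read off the two-step gap between $I_1$ and $I_2$. Everything else reduces to direct inspection of the configurations already pictured in Figure~\ref{fig:one-step-reduction}.
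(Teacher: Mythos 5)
Your proof is correct and follows essentially the same approach as the paper's: compute $w'$ explicitly, read off $K_2$ from the new corner to get $K_1 \cap K_2 = I_1$, observe that the first $t'-1$ corner positions of $w'$ are consecutive so that $R'$ can only be nonempty in the last slot (hence type r forces r0), and in the l1 subcase locate the unique nonempty middle piece of the $L'$-division to conclude $I_2 \subseteq \{s_{t+1},\ldots,s_{t+r-1}\}$, giving the required gap. The only cosmetic difference is that the paper divides $L'$ directly where you pass to $(w')^{-1}$ and divide its $R$-region, which is the same computation under the position/value swap; your version is somewhat more explicit than the paper's terse claim that $t\notin I_2$.
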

\begin{proof}
Keep the notation from above and let $K_1=\{s_1,\ldots,s_{t-1}\}$ and $I_1=\{s_{p+1},\ldots,s_{t-1}\}$. Let $|R_p|=q>0$, then by construction, $K_2=\{s_{p+1},\ldots,s_{t-1+q}\}$ so we immediately have $K_1\cap K_2=I_1$. For the new permutation $w'$, if it is of type r, then it is of type r0 since the new regions $R_1',\ldots,R_{t+q-p-2}'$, which are subsets of $R_1,\ldots,R_{p-1},R_{p+1},\ldots,R_{t-2}$, must be empty. See Figure~\ref{fig:one-step-reduction}.

The permutation $w'$ can be of type n or l. If it is of type l, by dividing $L'$ into $L_1'\sqcup\cdots\sqcup L_{t+q-p-1}'$ analogously as before, we see that $L_1'=\cdots=L_{t-p-1}'=\emptyset$ because these regions belong to $L=\emptyset$. By construction, this means $t\notin I_2$ so the consecutive intervals $I_1$ and $I_2$ do not have edges between them. 
\end{proof}

We are now ready to fully decompose $w$ and show that it is almost-polished.
\begin{proof}[Proof of implication (VT2)$\Rightarrow$(VT3)]
Let $w\in \mathfrak{S}_n$ avoid the four patterns of interest and keep the notations in this section. We use induction where the base cases $n=1,2$ are vacuously true. Let $w^{(1)}=w$ and continue to do one-step reduction of $w^{(i)}$ to obtain $w^{(i+1)}$, until $w^{(m)}$ is of type n, r0 or l0 whose one-step reduction equals $w^{(m+1)}$. Note that this is possible because the one-step reduction of type r1 or l1 never equals the identity. By Lemma~\ref{lem:one-step-after-r1}, as $i$ increases from $1$ to $m-1$, $w^{(i)}$ alternates between type r1 and type l1. Let $w^{(i)}$ have parameters $K_i$ and $I_i$ as above and let $S_1=K_1\cup K_2\cup\cdots K_{m-1}\cup K_m$. Here, the $K_i$'s and $I_i's$ are consecutive intervals ordered from left to right, respectively. Moreover by Lemma~\ref{lem:one-step-after-r1}, $K_i\cap K_{i+1}=I_i$ and the smallest index in $I_{i+1}$ is at least $2$ bigger than the largest index in $I_i$. See Figure~\ref{fig:KiIis} for an example of these intervals.
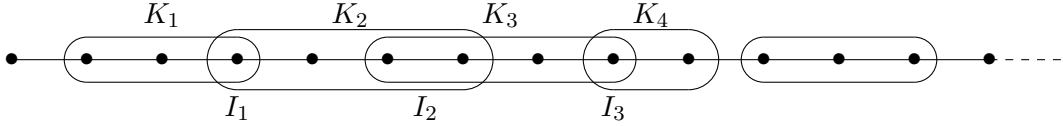
\begin{figure}[h!]
\centering
\begin{tikzpicture}[scale=1.0]
\draw(0,0)--(13,0);
\draw[dashed](13,0)--(14,0);
\node at (0,0) {$\bullet$};
\node at (1,0) {$\bullet$};
\node at (2,0) {$\bullet$};
\node at (3,0) {$\bullet$};
\node at (4,0) {$\bullet$};
\node at (5,0) {$\bullet$};
\node at (6,0) {$\bullet$};
\node at (7,0) {$\bullet$};
\node at (8,0) {$\bullet$};
\node at (9,0) {$\bullet$};
\node at (10,0) {$\bullet$};
\node at (11,0) {$\bullet$};
\node at (12,0) {$\bullet$};
\node at (13,0) {$\bullet$};

\draw(1,0.3)--(3,0.3);
\draw(1,-0.3)--(3,-0.3);
\draw (1,0.3) arc (90:270:0.3);
\draw (3,-0.3) arc (-90:90:0.3);

\draw(3,0.4)--(6,0.4);
\draw(3,-0.4)--(6,-0.4);
\draw (3,0.4) arc (90:270:0.4);
\draw (6,-0.4) arc (-90:90:0.4);

\draw(5,0.3)--(8,0.3);
\draw(5,-0.3)--(8,-0.3);
\draw (5,0.3) arc (90:270:0.3);
\draw (8,-0.3) arc (-90:90:0.3);

\draw(8,0.4)--(9,0.4);
\draw(8,-0.4)--(9,-0.4);
\draw (8,0.4) arc (90:270:0.4);
\draw (9,-0.4) arc (-90:90:0.4);

\draw(10,0.3)--(12,0.3);
\draw(10,-0.3)--(12,-0.3);
\draw (10,0.3) arc (90:270:0.3);
\draw (12,-0.3) arc (-90:90:0.3);

\node[above] at (2,0.3) {$K_1$};
\node[above] at (4.5,0.3) {$K_2$};
\node[above] at (6.5,0.3) {$K_3$};
\node[above] at (8.5,0.3) {$K_4$};
\node[below] at (3,-0.35) {$I_1$};
\node[below] at (5.5,-0.35) {$I_2$};
\node[below] at (8,-0.35) {$I_3$};
\end{tikzpicture}
\caption{The intervals in the reduction process.}
\label{fig:KiIis}
\end{figure}

Since $w^{(m)}$ is of type n or r1 or l1, $w^{(m+1)}=w^{(m)}w_0(K_m)$ or $w^{(m)}w_0(K_m)$ and also $\supp(w^{(m+1)})\subset S\setminus S_1$. Without loss of generality, assume $w=w^{(1)}$ is of type r1, then we have
\[w^{(m+1)}=\big(\cdots w_0(I_3)w_0(K_3)w_0(I_1)w_0(K_1)w w_0(K_2)w_0(I_2)w_0(K_4)w_0(I_4)\cdots\big)w_0(K_m) \]
or
\[w^{(m+1)}=w_0(K_m)\big(\cdots w_0(I_3)w_0(K_3)w_0(I_1)w_0(K_1)w w_0(K_2)w_0(I_2)w_0(K_4)w_0(I_4)\cdots\big).\]

Unpacking, we have the following possibilities:
\[w=\begin{cases}
\cdots w_0(K_{m-2})w_0(I_{m-2})(w^{(m+1)}w_0(K_m))w_0(I_{m-1})w_0(K_{m-1})\cdots\\
\cdots w_0(K_{m-2})w_0(I_{m-2})(w_0(K_m)w^{(m+1)})w_0(I_{m-1})w_0(K_{m-1})\cdots\\
\cdots w_0(K_{m-1})w_0(I_{m-1})(w^{(m+1)}w_0(K_m))w_0(I_{m-2})w_0(K_{m-2})\cdots\\
\cdots w_0(K_{m-1})w_0(I_{m-1})(w_0(K_m)w^{(m+1)})w_0(I_{m-2})w_0(K_{m-2})\cdots\\
\end{cases}.\]
As $w^{(m+1)}$ commutes with $w_0(K_i)$ and $w_0(I_i)$ where $i\leq m-1$, in all the possibilities above, we can move $w^{(m+1)}$ all the way to the left or all the way to the right. Moreover, $w_0(I_i)$ commutes with $w_0(K_j)$ if $j\neq i,i+1$. Thus, in all the possibilities above, we can move $w_0(I_i)$'s towards the middle, forming $w_0(I_1\cup I_2\cup\cdots \cup I_{m-1})$. 

Let $J_1=K_1\cup K_3\cup\cdots$ and $J_1'=K_2\cup K_4\cup\cdots$. We have $J_1\cap J_2=I_1\cup I_2\cup\cdots\cup I_{m-1}$, $w_0(J_1)=w_0(K_1)w_0(K_3)\cdots$, $w_0(J_1')=w_0(K_2)w_0(K_4)\cdots$. The above four possibilities can then be written as
\[w=\begin{cases}
w_0(J_1)w_0(J_1\cap J_1')w_0(J_1')w^{(m+1)}\\
w^{(m+1)}w_0(J_1)w_0(J_1\cap J_1')w_0(J_1')\\
w_0(J_1')w_0(J_1\cap J_1')w_0(J_1)w^{(m+1)}\\
w^{(m+1)}w_0(J_1')w_0(J_1\cap J_1')w_0(J_1)\\
\end{cases}.\]
As $\supp(w^{(m+1)})\subset S\setminus S_1$ where $S_1=J_1\cup J_1'$, by induction on the almost-polished element $w^{(m+1)}$, or continuing such decomposition into factors of the form $w_0(J_i)w_0(J_i\cap J_i')w_0(J_i')$, we exactly recover the definition of almost-polished elements (Definition~\ref{def:almost-polished}).
\end{proof}

\subsection{(VT3)$\Rightarrow$(VT1)}
The following lemma is straightforward.
\begin{lemma}\label{lem:VT-preserved-under-product}
If $G_1$ and $G_2$ are two simple graphs that are vertex-transitive, then $G_1\times G_2$ is vertex-transitive.
\end{lemma}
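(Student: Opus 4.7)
The plan is to construct an automorphism of $G_1\times G_2$ taking any prescribed vertex to any other prescribed vertex, by combining automorphisms of the two factors coordinatewise. Given $(u_1,u_2)$ and $(v_1,v_2)$ in $V(G_1)\times V(G_2)$, I would first invoke vertex-transitivity of the factors to pick $\varphi_1\in\aut(G_1)$ with $\varphi_1(u_1)=v_1$ and $\varphi_2\in\aut(G_2)$ with $\varphi_2(u_2)=v_2$, and then define $\varphi\colon V(G_1)\times V(G_2)\to V(G_1)\times V(G_2)$ by $\varphi(x_1,x_2)=(\varphi_1(x_1),\varphi_2(x_2))$. By construction $\varphi(u_1,u_2)=(v_1,v_2)$ and $\varphi$ is a bijection of vertex sets, with inverse given componentwise by $(\varphi_1^{-1},\varphi_2^{-1})$.

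The remaining task is to verify that $\varphi$ preserves adjacency in $G_1\times G_2$. In the context of Proposition~\ref{prop:disjoint-support}, where this product appears for Bruhat graphs of disjoint-support decompositions, the relevant product is the Cartesian product: the edges between $(x_1,x_2)$ and $(y_1,y_2)$ are exactly those pairs with either $x_1=y_1$ and $x_2$ adjacent to $y_2$ in $G_2$, or $x_2=y_2$ and $x_1$ adjacent to $y_1$ in $G_1$. Either of these conditions is a statement only about equality or adjacency in a single factor, and both are preserved (and reflected) by the automorphisms $\varphi_1$ and $\varphi_2$ of the factors. Hence $\varphi$ is edge-preserving in both directions and is a graph automorphism.

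I do not anticipate any genuine obstacle: the content of the lemma is simply that the componentwise action $(\varphi_1,\varphi_2)\cdot(x_1,x_2)=(\varphi_1(x_1),\varphi_2(x_2))$ embeds $\aut(G_1)\times\aut(G_2)$ into $\aut(G_1\times G_2)$, after which vertex-transitivity of each factor immediately transfers to the product. This is why the paper is justified in labeling the lemma \emph{straightforward}; the only care required is to fix which graph product is meant, and this is forced by the intended application to Bruhat graphs of disjoint-support decompositions in Proposition~\ref{prop:disjoint-support}.
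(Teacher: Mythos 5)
Your proof is correct and matches the paper's approach: both lift factor automorphisms componentwise to the Cartesian product, the paper writing the map as $(\mathrm{id}\times f_2)\circ(f_1\times\mathrm{id})$ and you writing it directly as $\varphi_1\times\varphi_2$, which is the same map. Your explicit verification of adjacency preservation (which the paper dismisses as clear) and your remark that the relevant product is the Cartesian one are both accurate and harmless additions.
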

\begin{proof}
For any two vertices $(u_1,u_2),(v_1,v_2)\in G_1\times G_2$, we want to show that there exists $f\in\aut(G_1\times G_2)$ that sends $(u_1,u_2)$ to $(v_1,v_2)$. Since $G_1$ is vertex-transitive, there exists $f_1\in \aut(G_1)$ such that $f_1(u_1)=v_1$. It is clear that $f_1\times \mathrm{id}\in\aut(G_1\times G_2)$. Then since $G_2$ is vertex-transitive, there exists $f_2\in\aut(G_2)$ such that $f_2(u_2)=v_2$. Now $(\mathrm{id}\times f_2)\circ (f_1\times\mathrm{id})(u_1,u_2)=(v_1,v_2)$. 
\end{proof}
\begin{proof}[Proof of implication (VT3)$\Rightarrow$(VT1)]
Let $w$ be almost-polished (Definition~\ref{def:almost-polished}). To show that $\Gamma(w)$ is vertex-transitive, by Proposition~\ref{prop:disjoint-support} and Lemma~\ref{lem:VT-preserved-under-product}, we can reduce to the case where $w=w_0(J)w_0(J\cap J')w_0(J')$. In fact, the elementary automorphisms (Section~\ref{sub:elementary-automorphisms}) are enough in this case.

We first show that $\big(w_0(J)w_0(J\cap J')\big)\cdot w_0(J')$ is length-additive. It suffices to show that $w_0(J)w_0(J\cap J')$ does not contain any right-descent in $J'$. This is because the simple generators in $J\cap J'$ cannot be in $D_R(w_0(J)w_0(J\cap J'))$ as they get canceled out after multiplying $w_0(J)$ by $w_0(J\cap J')$; and the simple generators in $J'\setminus J$ are not even in the support of $w_0(J)w_0(J\cap J')$. As a result, $D_R(w)\supset J'$, $w^{J'}=w_0(J)w_0(J\cap J')$ and $w_{J'}=w_0(J')$. 

For any $u\in\Gamma(w)$, $u\leq w$ so $u^{J'}\leq w^{J'}\leq w_0(J)$ and $u_{J'}\leq w_0(J')$. Analogously, $D_L(w)\supset J$. By Proposition~\ref{prop:right-left-multiplication-give-autos}, left multiplying by any element in $W(J)$ and right multiplying by any element in $W(J')$ give automorphisms. In particular, $u^{J'}\in W(J)$ and $u_{J'}\in W(J')$ so $u=u^{J'}u_{J'}$ is in the same orbit as the identity element under $\aut(\Gamma(w))$. This precisely means that $\Gamma(w)$ is vertex-transitive.
\end{proof}

This completes the proof of Theorem~\ref{thm:vertex-transitive}.

\section{Identity orbits in Bruhat graphs}
\label{sec:orbits}
In this section we describe a more precise version of Conjecture~\ref{conj:orbit-is-interval}, taking into account the automorphisms described in Section~\ref{sec:automorphisms} and the classification of vertex-transitive Bruhat graphs given in Section~\ref{sec:vertex-transitive}. 

In light of Proposition~\ref{prop:disjoint-support}, it is sufficient to consider permutations $w \in \mathfrak{S}_n$ which have full support and do not admit a disjoint support decomposition; we call such permutations \emph{Bruhat irreducible}.

\begin{defin}
\label{def:almost-reducible}
A Bruhat irreducible permutation $w\in \mathfrak{S}_n$ is \emph{almost reducible} at $(J,i)$ if $w=w^Jw_J$ is a BP-decomposition with $\supp(w^J) \cap J = \{s_i\}$ and $s_i\notin D_L(w),D_R(w)$.
\end{defin}

\begin{prop}
If a Bruhat irreducible $w\in \mathfrak{S}_n$ is almost reducible at $(J,i)$, then $J=\{s_1,\ldots,s_i\}$ or $\{s_i,\ldots,s_{n-1}\}$.
\end{prop}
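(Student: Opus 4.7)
The plan is to combine the descent-theoretic consequences of the almost reducibility hypothesis with Bruhat irreducibility, using the latter to rule out all structural possibilities for $J$ via disjoint-support factorizations of $w$. First I would extract descent information: since $w=w^Jw_J$ is length-additive one has $D_L(w^J)\subseteq D_L(w)$ and $D_R(w_J)\subseteq D_R(w)$, so the hypothesis $s_i\notin D_L(w)\cup D_R(w)$ gives $s_i\notin D_L(w^J)$ and $s_i\notin D_R(w_J)$. Combined with $s_i\notin D_R(w^J)$ (from $w^J\in W^J$ and $s_i\in J$) and $s_i\in D_L(w_J)$ (from the BP condition $\supp(w^J)\cap J\subseteq D_L(w_J)$), I would argue that the connected component $J^{(i)}$ of $J$ containing $s_i$ satisfies $|J^{(i)}|\geq 2$: $w_J$ factors as a commuting product over the components of $J$, and the projection $w_J|_{J^{(i)}}$ then has $s_i$ as a left but not a right descent, which is impossible when $W_{J^{(i)}}=\{e,s_i\}$.

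Next let $K_0$ be the connected component of $\supp(w^J)$ containing $s_i$, so $K_0\cap J=\{s_i\}$. Factoring $w^J=b_0\cdot u$ with $b_0\in W_{K_0}$ and $u$ supported on the remaining components of $\supp(w^J)$ (these commute with $b_0$, and $\ell(w^J)=\ell(b_0)+\ell(u)$), I would apply Bruhat irreducibility to
\[
w = u\cdot (b_0 w_J).
\]
This is length-additive (since $\ell(w)=\ell(u)+\ell(b_0)+\ell(w_J)\geq \ell(u)+\ell(b_0w_J)\geq\ell(w)$) and has disjoint supports, as $\supp(u)$ is disjoint from both $K_0$ and $J$ while $\supp(b_0w_J)\subseteq K_0\cup J$. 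Bruhat irreducibility forces $u=e$ or $b_0w_J=e$; the latter would require $\supp(b_0)=\supp(w_J)\subseteq K_0\cap J=\{s_i\}$, forcing $w_J=s_i$ (from $s_i\in D_L(w_J)$) and contradicting $s_i\notin D_R(w_J)$. Thus $u=e$ and $\supp(w^J)=K_0=[s_a,s_b]$ for some $a\leq i\leq b$, which together with $\supp(w^J)=(S\setminus J)\cup\{s_i\}$ gives
\[
J=\{s_1,\ldots,s_{a-1}\}\cup\{s_i\}\cup\{s_{b+1},\ldots,s_{n-1}\}.
\]

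Finally I would perform a case analysis on $(a,b)$. The constraint $|J^{(i)}|\geq 2$ forces $s_{i-1}\in J$ or $s_{i+1}\in J$, i.e.\ $a=i$ or $b=i$; and $a=b=i$ gives $J=S$, so $w^J\in W^J=\{e\}$ would contradict $s_i\in\supp(w^J)$. If $a=i$ and $b<n-1$, so that $J$ splits as the two components $\{s_1,\ldots,s_i\}$ and $\{s_{b+1},\ldots,s_{n-1}\}$, I would write the corresponding commuting factorization $w_J=w_J^{(1)}\cdot w_J^{(2)}$ and consider
\[
w = (b_0w_J^{(1)})\cdot w_J^{(2)}.
\]
The two factors have supports in $[s_1,s_b]$ and $[s_{b+1},s_{n-1}]$ respectively, so a second application of Bruhat irreducibility forces one factor to vanish. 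But $w_J^{(2)}=e$ would contradict $\supp(w)=S$, and $b_0w_J^{(1)}=e$ would force $\supp(b_0)\subseteq K_0\cap\{s_1,\ldots,s_i\}=\{s_i\}$, giving $s_i\in D_R(b_0)\subseteq D_R(w^J)$ and contradicting $w^J\in W^J$. Hence $b=n-1$ and $J=\{s_1,\ldots,s_i\}$; the symmetric case $b=i$ yields $J=\{s_i,\ldots,s_{n-1}\}$.

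The main obstacle I anticipate is the bookkeeping required to guarantee that the two successive disjoint-support factorizations are length-additive with genuinely disjoint supports (both conditions being needed for Bruhat irreducibility to apply); once the structural result $\supp(w^J)=K_0$ is in hand, the remaining case analysis is mechanical.
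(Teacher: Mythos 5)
Your proof is correct and uses the same two ingredients as the paper --- applying Bruhat irreducibility to commuting factorizations coming from the component structure of parabolic subgroups, and descent analysis on $w^J$ and $w_J$ --- but it organizes them in a different order. The paper's proof establishes that $J$ is a connected interval first (if $J=J^{(1)}\sqcup\cdots\sqcup J^{(k)}$ with $k\geq 2$, the parabolic decomposition with respect to $J^{(k)}$ gives a disjoint support decomposition, contradicting irreducibility together with full support), then shows $\supp(w^J)$ is connected by the same mechanism, and finally rules out $J=\{s_i\}$ via $s_i\notin D_R(w)$; the conclusion then follows immediately because two connected intervals meeting only at $s_i$ and covering $S$ must be $\{s_1,\dots,s_i\}$ and $\{s_i,\dots,s_{n-1}\}$. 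You instead extract the stronger descent fact $|J^{(i)}|\geq 2$ at the start, establish connectivity of $\supp(w^J)$ only, and defer the question of whether $J$ is connected to a final case analysis on the two endpoints $(a,b)$ of $K_0=\supp(w^J)$, using a second Bruhat-irreducibility argument to kill the possibility that $J$ has a second component. The paper's route is shorter because establishing $J$ connected directly makes the endgame a one-liner, whereas yours requires the extra case analysis; but your version is more explicit about what the paper leaves as ``similarly'' and ``contradicts $w$ being Bruhat irreducible.'' Two small gaps worth closing: in the final case the conclusion $\supp(b_0)\subseteq\{s_i\}$ also allows $b_0=e$, which you should rule out (it forces $w^J=e$, contradicting $\supp(w^J)\cap J=\{s_i\}$); and the inference from $|J^{(i)}|\geq 2$ to ``$a=i$ or $b=i$'' needs a remark that when $i=1$ or $i=n-1$ the analysis shows the hypotheses cannot be satisfied at all, so the statement is vacuous there.
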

\begin{proof}
Let $J=J^{(1)}\sqcup\cdots\sqcup J^{(k)}$ be a decomposition of $J$ into connected components of the Dynkin diagram such that $s_i\in J^{(1)}$. If $k\geq2$, then the parabolic decomposition of $w$ with respect to $J^{(k)}$ contradicts $w$ being Bruhat irreducible, so $k=1$ and $J$ is a connected interval. Similarly, if $\supp(w^J)$ has a connected component not adjacent to $i$, $w$ cannot be Bruhat irreducible. Moreover, $J\neq\{s_i\}$ since $s_i\notin D_R(w)$. 
\end{proof}

Note that a Bruhat irreducible $w\in \mathfrak{S}_n$ is almost reducible at $(\{s_i,\ldots,s_{n-1}\},i)$ if and only if $w^{-1}$ is almost reducible at $(\{s_1,\ldots,s_i\},i)$.

\begin{defin}
A Bruhat irreducible permutation $w\in \mathfrak{S}_n$ is \emph{right-almost-reducible} at $i$ if it is almost reducible at $(\{s_i,\ldots,s_{n-1}\},i)$ and is \emph{left-almost-reducible} at $i$ if it is almost reducible at $(\{s_1,\ldots,s_{i}\},i)$.
\end{defin}

\begin{prop}\label{prop:almost-sep-supp}
If $w\in \mathfrak{S}_n$ is right-almost-reducible at $i$, then
\begin{enumerate}
\item $\max\{w(1),\ldots,w(i-1)\}=i+1$,
\item $w(i)>i+1$, and
\item the elements of $\{1,\ldots,i+1\}\setminus\{w(1),\ldots,w(i-1)\}$ appear out of order in $w$.
\end{enumerate}
\end{prop}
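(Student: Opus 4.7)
The plan is to unpack the BP-decomposition $w = uv$ with $u \coloneqq w^J$ and $v \coloneqq w_J$ for $J = \{s_i, \ldots, s_{n-1}\}$, extract structural constraints on $u$ and $v$ from the hypotheses, and then deduce each of the three conclusions in turn.

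The key initial observation is that $\supp(u) \cap J = \{s_i\}$ forces $\supp(u) \subseteq \{s_1, \ldots, s_i\}$, so $u$ permutes $\{1, \ldots, i+1\}$ while fixing $\{i+2, \ldots, n\}$. Combined with $u \in W^J$ (so $u(i) < u(i+1) < \cdots < u(n)$) and $s_i \in \supp(u)$ (forcing $u(i+1) \neq i+1$), this gives $u(i) < u(i+1) \leq i$. Next I would rule out $u(i+1) = i$: that would force $u^{-1}(i) = i+1$ and $u^{-1}(i+1) \leq i-1$, and since $v$ fixes positions less than $i$, computing $w^{-1} = v^{-1}u^{-1}$ would give $w^{-1}(i) \geq i > w^{-1}(i+1)$, contradicting $s_i \notin D_L(w)$. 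Thus $u(i), u(i+1) \in \{1, \ldots, i-1\}$.

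Parts (1) and (3) follow quickly from here. Because $v$ fixes positions less than $i$, $\{w(1), \ldots, w(i-1)\} = \{u(1), \ldots, u(i-1)\} = \{1, \ldots, i+1\} \setminus \{u(i), u(i+1)\}$; since $u(i), u(i+1) \leq i-1$, this set has maximum $i+1$, establishing (1). The two missing values of (3) are precisely $u(i)$ and $u(i+1)$, and they appear in $w$ at positions $v^{-1}(i)$ and $v^{-1}(i+1)$ respectively. The BP-condition $\{s_i\} = \supp(u) \cap J \subseteq D_L(v)$ gives $v^{-1}(i+1) < v^{-1}(i)$, so the larger value $u(i+1)$ precedes the smaller value $u(i)$ in the one-line notation of $w$---that is, out of order.

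The main obstacle is (2), which is where Bruhat irreducibility is essential. The claim reduces to showing $v(i) \geq i+2$. The case $v(i) = i$ is immediate to exclude, since $s_i \in D_L(v)$ would then require $v^{-1}(i+1) < i$, impossible for $v \in W_J$. The harder case is $v(i) = i+1$: there I would set $v'' \coloneqq s_i v$ and observe that $v''(i) = i$, so $v'' \in W_{\{s_{i+1}, \ldots, s_{n-1}\}}$; rewriting $w = u s_i v'' = (u s_i) v''$ exhibits a decomposition with $\supp(u s_i) \subseteq \{s_1, \ldots, s_i\}$ disjoint from $\supp(v'') \subseteq \{s_{i+1}, \ldots, s_{n-1}\}$. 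Both factors are nontrivial: $u s_i \neq e$ because $u \in W^J$ forces $u \neq s_i$, and $v'' \neq e$ because full support of $w$ together with length-additivity of the BP-decomposition forces $\supp(v) \supseteq \{s_{i+1}, \ldots, s_{n-1}\}$, which is nonempty (the edge case $i = n-1$ is ruled out directly: it would give $v = s_{n-1}$, and then $s_i \notin D_R(w)$ would force $s_{n-1} \in D_R(u)$, contradicting $u \in W^J$). This disjoint support decomposition contradicts Bruhat irreducibility. Hence $v(i) \geq i+2$, and since $u$ fixes all such values, $w(i) = u(v(i)) = v(i) \geq i+2$, proving (2).
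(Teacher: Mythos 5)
Your proof is correct and follows essentially the same route as the paper's: unpack the BP-decomposition with $\supp(w^J) \subseteq \{s_1,\ldots,s_i\}$ so that $w^J$ permutes $\{1,\ldots,i+1\}$ and $w_J$ fixes $\{1,\ldots,i-1\}$; read off (1) and (3) from $w^J \in W^J$ and $s_i \in D_L(w_J)$; and obtain (2) by observing that $w_J(i)=i+1$ would force $s_i w_J$ to fix $\{1,\ldots,i\}$, giving a disjoint support decomposition $(w^J s_i)(s_i w_J)$ and contradicting Bruhat irreducibility. You are somewhat more careful than the paper: you explicitly verify both factors are nontrivial, and you handle the degenerate case $i=n-1$ via $s_i \notin D_R(w)$, a point the paper leaves implicit. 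One small remark: your step ruling out $u(i+1)=i$ via $s_i \notin D_L(w)$ is a correct deduction but is not needed for any of (1)--(3); the weaker bound $u(i+1) \leq i$, which you already have from $u \in W^J$ and $s_i \in \supp(u)$, is enough to conclude $i+1 \in \{w(1),\ldots,w(i-1)\}$ in part (1).
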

\begin{proof}
Consider the permutation $w_J$, which fixes $1,\ldots,i-1$, in one-line notation. By definition, $s_i\in D_L(w_J)$ meaning that $i+1$ appears before $i$ in $w_J$. Since $w$ is Bruhat irreducible, $i\in\supp(s_iw_J)$ so $s_iw_J(i)\neq i$ and thus $w_J(i)>i+1$. Now $w^J$ permutes the values $1,2,\ldots,i+1$ of $w_J$. This means that $w(i)=w_J(i)>i+1$. We clearly have $w(1),\ldots,w(i-1)\in\{1,\ldots,i+1\}$. Since $i\in\supp(w^J)$, we necessarily have $i+1$ among in $w(1),\ldots,w(i-1)$. The last item follows because $i+1$ appears before $i$ in $w_J$. 
\end{proof}

\begin{cor}
\label{cor:i-commute-with-descent}
If $w\in \mathfrak{S}_n$ is almost reducible at $(J,i)$, then $s_i$ commutes with the elements of $D_L(w)\cap D_R(w)$.
\end{cor}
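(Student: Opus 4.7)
The plan is to deduce this directly from the positional constraints recorded in Proposition~\ref{prop:almost-sep-supp}. First, since $D_L(w)\cap D_R(w)=D_L(w^{-1})\cap D_R(w^{-1})$ and since the remark preceding Definition~\ref{def:almost-reducible} tells us that $w$ is left-almost-reducible at $i$ if and only if $w^{-1}$ is right-almost-reducible at $i$, I would reduce without loss of generality to the right-almost-reducible case $J=\{s_i,\ldots,s_{n-1}\}$.

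In $\mathfrak{S}_n$, $s_i$ commutes with $s_j$ precisely when $|i-j|\neq 1$, so the content of the corollary is that neither $s_{i-1}$ nor $s_{i+1}$ lies in $D_L(w)\cap D_R(w)$. (The conditions defining almost-reducibility force $2\le i\le n-2$, so both of these simple reflections exist: the extreme cases would either force $w^J=e$ or force $s_i\in D_R(w)$ via $w_J=s_i$, both contradicting the definition.) For $s_{i-1}$, parts (1) and (2) of Proposition~\ref{prop:almost-sep-supp} immediately give $w(i-1)\le \max\{w(1),\ldots,w(i-1)\}=i+1<w(i)$, so $s_{i-1}\notin D_R(w)$. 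For $s_{i+1}$, I would instead show $s_{i+1}\notin D_L(w)$ by locating the values $i+1$ and $i+2$ in one-line notation: part (1) forces $w^{-1}(i+1)\le i-1$, while part (2) together with the same maximum bound forces $w^{-1}(i+2)\ge i$ (no position $j<i$ can carry a value exceeding $i+1$, while position $i$ carries $w(i)\ge i+2$). Hence $i+1$ appears strictly before $i+2$, and so $s_{i+1}\notin D_L(w)$.

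I do not anticipate any substantive obstacle: once the inversion-symmetric reduction to the right-almost-reducible case is in hand, the argument is a short positional computation, and in fact the hypothesis $s_i\notin D_L(w)\cap D_R(w)$ is used only through Proposition~\ref{prop:almost-sep-supp}. The only mild subtlety is recognizing that we do not need to separately rule out both membership in $D_L(w)$ and $D_R(w)$ for each of $s_{i\pm 1}$; it is enough to exhibit one failure for each, which is what makes the argument quick.
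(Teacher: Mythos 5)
Your proof is correct and follows essentially the same route as the paper: reduce WLOG to the right-almost-reducible case, then read off $s_{i-1}\notin D_R(w)$ from Proposition~\ref{prop:almost-sep-supp}(1),(2) and $s_{i+1}\notin D_L(w)$ from the fact that $i+1$ appears before $i+2$ in the one-line notation. The only difference is cosmetic: you invoke (2) redundantly when locating $i+2$ (part (1) alone already forces $w^{-1}(i+2)\ge i$), and you spell out the boundary observation $2\le i\le n-2$, which the paper leaves implicit.
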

\begin{proof}
Assume without loss of generality that $J=\{s_i,\ldots,s_{n-1}\}$. By Proposition~\ref{prop:almost-sep-supp}(1), $i+1$ appears before $i+2$ in $w$, so $s_{i+1}\notin D_L(w)$. By Proposition~\ref{prop:almost-sep-supp} (1) and (2), $w(i-1)\leq i+1<w(i)$ so $s_{i-1}\notin D_R(w)$. 
\end{proof}

\begin{cor}
\label{cor:i-commute-with-j}
If $w\in \mathfrak{S}_n$ is right-almost-reducible at $i$ and left-almost-reducible at $j$, then $i\neq j$ and $s_is_j=s_js_i$.
\end{cor}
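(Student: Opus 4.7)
The plan is a short case analysis on the integer $i - j \in \{-1, 0, 1\}$, leveraging Proposition~\ref{prop:almost-sep-supp} applied in parallel to $w$ (right-almost-reducible at $i$ by hypothesis) and to $w^{-1}$ (right-almost-reducible at $j$, using the observation that $w$ is left-almost-reducible at $j$ iff $w^{-1}$ is right-almost-reducible at $j$). So I would first translate the hypothesis on $j$ into a statement about $w^{-1}$, putting both pieces of data on the same footing.

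For the $i \neq j$ assertion, I would suppose for contradiction that $i = j$. Proposition~\ref{prop:almost-sep-supp}(2) applied to $w$ at $i$ gives $w(i) > i+1$, and since $s_i \notin D_R(w)$ by hypothesis, we also have $w(i+1) > w(i) > i+1$; thus both positions $i$ and $i+1$ of $w$ carry values exceeding $i+1$. Proposition~\ref{prop:almost-sep-supp}(1) applied to $w^{-1}$ at $i$ then forces every value in $\{1,\ldots,i-1\}$ to occupy some position $\leq i+1$, and hence, with positions $i$ and $i+1$ blocked, these values must fill positions $1,\ldots,i-1$ exactly. But Proposition~\ref{prop:almost-sep-supp}(1) applied to $w$ itself simultaneously places the value $i+1$ at some position in $\{1,\ldots,i-1\}$, crowding $i$ distinct values into $i-1$ slots -- contradiction.

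For the commutation $s_is_j = s_js_i$, I would rule out $|i-j| = 1$ in two symmetric subcases. If $j = i+1$, Proposition~\ref{prop:almost-sep-supp}(1) for $w$ at $i$ puts value $i+1$ at some position $\leq i-1$, while Proposition~\ref{prop:almost-sep-supp}(2) for $w^{-1}$ at $j$ demands $w^{-1}(i+1) > i+2$; these are directly incompatible. If $j = i-1$, Proposition~\ref{prop:almost-sep-supp}(2) for $w$ at $i$ gives $w(i) > i+1$, whereas Proposition~\ref{prop:almost-sep-supp}(1) for $w^{-1}$ at $i-1$ forces some value in $\{1,\ldots,i-2\}$ to land at position $i$, i.e.\ $w(i) \leq i-2$ -- again impossible.

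I anticipate no real obstacle: once left-almost-reducibility at $j$ is rephrased as right-almost-reducibility of $w^{-1}$ at $j$, everything reduces to mechanical bookkeeping with the three conclusions of Proposition~\ref{prop:almost-sep-supp}. The one step that requires a little care is invoking $s_i \notin D_R(w)$ in the diagonal case $i = j$ to control $w(i+1)$; without that extra input, only one position (namely $i$) is known to be blocked and the pigeonhole collapse in the final step does not close.
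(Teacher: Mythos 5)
Your proof is correct and takes essentially the same route as the paper's: translate left-almost-reducibility at $j$ into right-almost-reducibility of $w^{-1}$ at $j$, then play the three conclusions of Proposition~\ref{prop:almost-sep-supp} for $w$ against those for $w^{-1}$ to get a contradiction. For the cases $j=i\pm1$ your bookkeeping matches the paper's (the paper only writes out $j=i-1$ and appeals to symmetry for $j=i+1$; you do both explicitly). For the $i=j$ case your argument differs mildly and is arguably cleaner: the paper invokes condition (3) to pin down two out-of-order small values and then uses condition (1) on $w^{-1}$ to contradict their positions, whereas you sidestep condition (3) entirely by using the definitional hypothesis $s_i\notin D_R(w)$ to block position $i+1$ (so $w(i+1)>w(i)>i+1$), after which conditions (1) for $w$ and for $w^{-1}$ already collide. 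In fact, once you observe $w(i+1)>i+1$, condition (1) on $w^{-1}$ (which forces $w(i+1)\leq i-1$, since the maximum position among values $1,\ldots,i-1$ is exactly $i+1$) gives a contradiction immediately, so the pigeonhole step at the end of your $i=j$ case, while valid, is not even needed.
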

\begin{proof}
We first show that $i\neq j$. Assume the opposite that both $w$ and $w^{-1}$ are right-almost-reducible at $i$. By condition (3) of Proposition~\ref{prop:almost-sep-supp}, assume $i+1>w(a)>w(b)$ where $i<a<b$, since $w(i)>i+1$. By condition (1) on $w^{-1}$, we know that $w(i+1)\leq i-1$ so $i+1$ is one of $a,b$ and it has to be $a$ because $a<b$. However, $w(b)<w(a)\leq i-1$ but $w^{-1}(w(b))=b>a=i+1$, contradicting condition (1) on $w^{-1}$.

To show that $s_i$ and $s_j$ commute, we can assume to the contrary that $j=i-1$, since if $j=i+1$, we may consider the same problem on $w^{-1}$. Now $w$ is right-almost-reducible at $i$ and left-almost-reducible at $i-1$ so $w^{-1}$ is right-almost-reducible at $i-1$. By condition (2) of $w$, $w(i)>i+1$ but by condition (1) of $w^{-1}$, $w(i)<i$, a contradiction.
\end{proof}

\begin{defin}
For a Bruhat irreducible permutation $w \in \mathfrak{S}_n$, let 
\[
\{i_1<\cdots<i_k\} = \{i \mid \text{$w$ is right-almost-reducible at $i$}\}
\]
and define $A_R(w):=s_{i_1}\cdots s_{i_k}$. Similarly, let $\{j_1<\cdots<j_t\}$ be the set of $j$ at which $w$ is left-almost-reducible and define $A_L(w):=s_{j_t}\cdots s_{j_1}.$
\end{defin}

\begin{cor}
Let $w$ be Bruhat irreducible. Then the following three elements commute pairwise: \[A_R(w),A_L(w),w_0(D_L(w)\cap D_R(w)).\]
\end{cor}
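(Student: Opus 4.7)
The plan is to reduce the corollary directly to the two preceding corollaries (Corollary~\ref{cor:i-commute-with-descent} and Corollary~\ref{cor:i-commute-with-j}); no further structural input about Bruhat irreducibility appears to be needed, so I expect the argument to be short and I do not anticipate a real obstacle.

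First I would handle the two pairs that involve $w_0(D_L(w)\cap D_R(w))$. By the definition of right-almost-reducibility, each index $i_a$ appearing in $A_R(w)=s_{i_1}\cdots s_{i_k}$ is one at which $w$ is almost reducible (with $J=\{s_{i_a},\ldots,s_{n-1}\}$), so Corollary~\ref{cor:i-commute-with-descent} applies and $s_{i_a}$ commutes with every simple reflection in $D_L(w)\cap D_R(w)$. Hence $s_{i_a}$ commutes with any word in those generators, and in particular with $w_0(D_L(w)\cap D_R(w))$. Multiplying these commutation relations together in the fixed order $s_{i_1}\cdots s_{i_k}$ shows that $A_R(w)$ itself commutes with $w_0(D_L(w)\cap D_R(w))$. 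The identical argument, using left-almost-reducibility in place of right-almost-reducibility, handles $A_L(w)$.

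For the remaining pair $A_R(w)$ and $A_L(w)$, Corollary~\ref{cor:i-commute-with-j} shows that for each $a\in\{1,\ldots,k\}$ and $b\in\{1,\ldots,t\}$ we have $i_a\neq j_b$ and $s_{i_a}s_{j_b}=s_{j_b}s_{i_a}$. Consequently every letter of $A_L(w)=s_{j_t}\cdots s_{j_1}$ commutes with every letter of $A_R(w)=s_{i_1}\cdots s_{i_k}$, so one may slide the $s_{j_b}$'s past the $s_{i_a}$'s freely in the product $A_R(w)\,A_L(w)$. This transforms it into $A_L(w)\,A_R(w)$, completing the proof. Note that this argument does not require that the $s_{i_a}$'s commute among themselves, nor the $s_{j_b}$'s — only the cross-commutations matter, which is exactly what Corollary~\ref{cor:i-commute-with-j} supplies.
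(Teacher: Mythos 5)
Your argument is correct and is precisely the deduction the paper intends: the corollary is stated immediately after Corollaries~\ref{cor:i-commute-with-descent} and~\ref{cor:i-commute-with-j} without a separate proof, and your two-step reduction (letter-by-letter commutation with $D_L(w)\cap D_R(w)$, then cross-commutation between the $s_{i_a}$'s and $s_{j_b}$'s) is exactly what those corollaries supply.
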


The following is a strengthened version of Conjecture~\ref{conj:orbit-is-interval}.

\begin{conj}
\label{conj:strengthened-orbit-conjecture}
Let $w \in \mathfrak{S}_n$ be Bruhat irreducible and let $\mc{O}$ denote the orbit of $e$ under graph automorphisms of $\Gamma(w)$. Define
\[
v(w) \coloneqq w_0(D_L(w))\cdot A_R(w) \cdot w_0(D_L(w)\cap D_R(w)) \cdot A_L(w)\cdot w_0(D_R(w)),
\]
then $\mc{O}=[e,v(w)]$. 
\end{conj}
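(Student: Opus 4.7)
The plan is to prove the two inclusions of Conjecture~\ref{conj:strengthened-orbit-conjecture} separately: first the ``easy'' direction $[e,v(w)] \subseteq \mathcal{O}$ using the explicit automorphisms from Section~\ref{sec:automorphisms}, and then the much harder direction $\mathcal{O} \subseteq [e,v(w)]$, which I expect to be the main obstacle.

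For $[e,v(w)] \subseteq \mathcal{O}$, the key observation is that each of the five factors of $v(w)$ arises from a distinct commuting family of $\Gamma(w)$-automorphisms. The factor $w_0(D_L(w))$ is the endpoint of iterated left multiplications by $s \in D_L(w)$, which are automorphisms by Proposition~\ref{prop:right-left-multiplication-give-autos}; similarly $w_0(D_R(w))$ is reached by right multiplications. Each $s_i$ appearing in $A_R(w)$ (respectively $A_L(w)$) corresponds to the middle multiplication of Proposition~\ref{prop:middle-mult-is-automorphism} at the right-almost-reducible (respectively left-almost-reducible) position $i$, which is applicable precisely because Definition~\ref{def:almost-reducible} forces the required shared-support condition $\supp(w^J) \cap J = \{s_i\}$. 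The factor $w_0(D_L(w) \cap D_R(w))$ is produced by the shared descents acting from either side. By Corollaries~\ref{cor:i-commute-with-descent} and~\ref{cor:i-commute-with-j} (and the last unnumbered corollary in Section~\ref{sec:orbits}), these five families commute pairwise, so their composition applied to $e$ is length-additive and equals $v(w)$. For an arbitrary $u \leq v(w)$, I would then invoke the Subword Property (Proposition~\ref{prop:subword-property}) to embed a reduced word of $u$ inside a canonical reduced word for $v(w)$ built from these commuting pieces, and apply the corresponding sub-composition of automorphisms to send $e$ to $u$.

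The reverse inclusion $\mathcal{O} \subseteq [e,v(w)]$ will be the hard part. The goal is to rule out, for every $\varphi \in \aut(\Gamma(w))$, the possibility that $\varphi(e)$ uses a simple reflection outside $D_L(w) \cup D_R(w) \cup \{s_i : w \text{ is almost reducible at some } (J,i)\}$, and further to bound the multiplicities of the admissible generators in any reduced word of $\varphi(e)$ by those dictated by $v(w)$. A natural attack is to extend the triangle-preservation technique of Lemma~\ref{lem:preserve-triangle} into a systematic family of combinatorial invariants: the image $u = \varphi(e)$ must match $e$ in the degree $|N_1(e) \cap [e,w]|$, in the isomorphism type of each induced subgraph on $N_d(e)$, and in finer incidence data such as pairs of vertices in $N_1(e)$ sharing a given number of common neighbors in $N_2(e)$. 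One would then translate these constraints into subword conditions on $u$ and argue case-by-case, guided by the explicit decomposition theory of Section~\ref{sec:vertex-transitive}.

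An alternative, and perhaps more structural, strategy is an induction on parabolic decompositions: for any BP-decomposition $w = w^J w_J$, Proposition~\ref{prop:BP-equivalences} gives $[e,w]$ the structure of a product $([e,w^J] \cap W^J) \times [e,w_J]$, and one could attempt to show that every $\varphi \in \aut(\Gamma(w))$ must respect a coarsening of this product structure, thereby reducing the statement to strictly smaller Bruhat-irreducible intervals together with the easy case where $D_L(w) = S$ or $D_R(w) = S$. Either approach, however, will require substantially new combinatorial input beyond what is available in the paper, since the conjecture is in spirit asserting that \emph{every} automorphism of $\Gamma(w)$ is generated by the three mechanisms of Section~\ref{sec:automorphisms}; this is closely parallel to (and at least as deep as) Conjecture~\ref{conj:SM-equals-automorphism-general}, which is presumably why the statement remains open.
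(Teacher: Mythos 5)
This statement is a \emph{conjecture} in the paper, and neither the paper nor your proposal proves it. The paper's only concrete progress in this direction is Proposition~\ref{prop:v-is-in-orbit}, which shows the single containment $v(w) \in \mathcal{O}$ by exhibiting an explicit composition of left, right, and middle multiplication automorphisms sending $e$ to $v(w)$; it does not prove $[e,v(w)] \subseteq \mathcal{O}$, and certainly not the reverse inclusion. Your proposal correctly recognizes the hard direction $\mathcal{O} \subseteq [e,v(w)]$ as genuinely open, and the parallel you draw with Conjecture~\ref{conj:SM-equals-automorphism-general} is apt, but you underestimate the gap in the ``easy'' direction as well.

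Specifically, your claimed proof that $[e,v(w)] \subseteq \mathcal{O}$ is not sound as sketched. The orbit $\mathcal{O}$ is not automatically downward-closed in Bruhat order, so $v(w) \in \mathcal{O}$ gives nothing for smaller elements. Your plan is to pick $u \leq v(w)$, embed a reduced word of $u$ into a reduced word of $v(w)$ built from the five commuting blocks, and apply ``the corresponding sub-composition of automorphisms.'' But middle multiplication at position $i$ sends $e$ only to $s_i$ (it is essentially an involution with orbit $\{e,s_i\}$), so there is no obvious sub-composition reaching a proper prefix of $A_R(w)$; moreover, the subword of $u$ sitting inside one of the five blocks need not itself be a reduced word of a group element that the corresponding family of automorphisms can reach, and the commutation facts proven in the paper (Corollaries~\ref{cor:i-commute-with-descent}, \ref{cor:i-commute-with-j}, and the unnumbered corollary) only cover $A_R(w)$, $A_L(w)$, and $w_0(D_L(w)\cap D_R(w))$ among themselves, not their interaction with the outer $w_0(D_L(w))$ and $w_0(D_R(w))$ factors. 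One would need to verify at each intermediate step that the descent hypotheses of Proposition~\ref{prop:right-left-multiplication-give-autos} and the BP-decomposition hypothesis of Proposition~\ref{prop:middle-mult-is-automorphism} are still satisfied, and none of this is carried out. So even granting your structural insight, both inclusions remain genuinely open, consistent with the paper's labeling of this statement as a conjecture.
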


\begin{prop}
\label{prop:confirm-conj-for-vt}
Let $w \in \mathfrak{S}_n$ be Bruhat irreducible and such that $\Gamma(w)$ is vertex-transitive, then $v(w)=w$, so Conjecture~\ref{conj:strengthened-orbit-conjecture} holds in this case.
\end{prop}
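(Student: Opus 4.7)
The plan is to combine the classification in Theorem~\ref{thm:vertex-transitive} with the structure of the orbit of $e$ under the elementary automorphisms of Proposition~\ref{prop:right-left-multiplication-give-autos}. First I would apply (VT1)$\Rightarrow$(VT3) to write $w=w_0(J)w_0(J\cap J')w_0(J')$ as an almost-polished element. Bruhat irreducibility rules out $k\geq 2$ (distinct $S_i$'s would yield a disjoint support decomposition via Proposition~\ref{prop:disjoint-support}) and also forces $J\cap J'\neq\emptyset$ (otherwise $w=w_0(J)w_0(J')$ is itself a disjoint support decomposition); since $w$ has full support, $J\cup J'=S$. The proof of (VT3)$\Rightarrow$(VT1) gives $D_R(w)\supseteq J'$, and applying the same argument to $w^{-1}$ (whose almost-polished form just swaps $J$ and $J'$) gives $D_L(w)\supseteq J$. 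Hence $D_L(w)\cup D_R(w)\supseteq J\cup J'=S$, and the condition $s_i\notin D_L(w),D_R(w)$ required by Definition~\ref{def:almost-reducible} cannot be satisfied for any $i$. So $A_R(w)=A_L(w)=e$ and the formula for $v(w)$ reduces to $v(w)=w_0(D_L)\cdot w_0(D_L\cap D_R)\cdot w_0(D_R)$, where $D_L:=D_L(w)$ and $D_R:=D_R(w)$.

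Next I would show $v(w)=w$ by identifying both elements as the maximum of the same principal order ideal. On one hand, $v(w)=\bigl[w_0(D_L)w_0(D_L\cap D_R)\bigr]\cdot w_0(D_R)$ is a Billey--Postnikov decomposition with respect to $J=D_R$: the left factor $u:=w_0(D_L)w_0(D_L\cap D_R)=w_0(D_L)^{D_L\cap D_R}$ lies in $W_{D_L}\cap W^{D_L\cap D_R}\subseteq W^{D_R}$, the product is length-additive, and the BP condition of Definition~\ref{def:BP-decomposition} is immediate since $D_L(w_0(D_R))=D_R$. Proposition~\ref{prop:BP-equivalences}(2) then gives a bijection $[e,u]\times[e,w_0(D_R)]\to[e,v(w)]$ by multiplication; combining $[e,u]=W_{D_L}\cap W^{D_L\cap D_R}$ with the standard factorization $W_{D_L}=(W_{D_L}\cap W^{D_L\cap D_R})\cdot W_{D_L\cap D_R}$ (and the inclusion $W_{D_L\cap D_R}\subseteq W_{D_R}$) yields $[e,v(w)]=W_{D_L}\cdot W_{D_R}$. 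On the other hand, Proposition~\ref{prop:right-left-multiplication-give-autos} says that left-multiplication by elements of $W_{D_L}$ and right-multiplication by elements of $W_{D_R}$ are automorphisms of $\Gamma(w)$, so the orbit $W_{D_L}\cdot W_{D_R}$ of $e$ under the group they generate sits inside $[e,w]$. Finally, the almost-polished expression itself shows $w=[w_0(J)w_0(J\cap J')]\cdot w_0(J')\in W_J\cdot W_{J'}\subseteq W_{D_L}\cdot W_{D_R}$. Combining these facts, $[e,v(w)]=W_{D_L}W_{D_R}\subseteq[e,w]$ gives $v(w)\leq w$, while $w\in W_{D_L}W_{D_R}=[e,v(w)]$ gives $w\leq v(w)$; hence $v(w)=w$.

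The main step requiring care is the identification $[e,v(w)]=W_{D_L}\cdot W_{D_R}$, which packages the BP decomposition of $v(w)$ together with the standard parabolic double-coset factorization through $W_{D_L\cap D_R}$. Once this is established, the rest of the argument is a short squeeze using the two containments $w\in W_{D_L}W_{D_R}$ and $W_{D_L}W_{D_R}\subseteq[e,w]$.
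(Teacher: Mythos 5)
Your proposal is correct in substance but takes a genuinely different route from the paper's own argument. The paper establishes $A_R(w)=A_L(w)=e$ by showing that a right- or left-almost-reducible index would force an occurrence of the forbidden pattern $34521$ or $54123$ (via Proposition~\ref{prop:almost-sep-supp}), whereas you derive the same conclusion structurally from the almost-polished form $w=w_0(J)w_0(J\cap J')w_0(J')$ together with the observation $D_L(w)\cup D_R(w)\supseteq J\cup J'=S$. For the remaining identity $v(w)=w$, the paper simply asserts that $w_0(D_L)w_0(D_L\cap D_R)w_0(D_R)$ ``is the expression for $w$ as a Bruhat irreducible almost-polished element,'' implicitly relying on $J=D_L,\ J'=D_R$, while you give a self-contained squeeze argument identifying both $[e,v(w)]$ and $[e,w]$ via the set $W_{D_L}W_{D_R}$, using Proposition~\ref{prop:BP-equivalences}(2) for the interval $[e,v(w)]$ and the elementary automorphisms of Proposition~\ref{prop:right-left-multiplication-give-autos} for the containment in $[e,w]$. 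Your route has the advantage of not needing to match the almost-polished decomposition against the descent data.

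One detail in your second step needs correcting. Proposition~\ref{prop:BP-equivalences}(2) gives a bijection $\bigl([e,u]\cap W^{D_R}\bigr)\times[e,w_0(D_R)]\to[e,v(w)]$, not $[e,u]\times[e,w_0(D_R)]\to[e,v(w)]$, and your stated identity $[e,u]=W_{D_L}\cap W^{D_L\cap D_R}$ is false: for instance any simple reflection in $D_L\cap D_R$ lies in $[e,u]$ but not in $W^{D_L\cap D_R}$. The correct identity is $[e,u]\cap W^{D_R}=W_{D_L}\cap W^{D_L\cap D_R}$, which follows since for $x\in W_{D_L}$ one has $D_R(x)\subseteq D_L$, so $x\in W^{D_R}$ if and only if $x\in W^{D_L\cap D_R}$, together with the fact that $u$ is the longest element of $W_{D_L}\cap W^{D_L\cap D_R}$. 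With this replacement the computation $[e,v(w)]=(W_{D_L}\cap W^{D_L\cap D_R})\cdot W_{D_R}=W_{D_L}\cdot W_{D_R}$ and the ensuing squeeze go through unchanged.
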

\begin{proof}
If $w$ is right-almost-reducible at $i$, then Proposition~\ref{prop:almost-sep-supp} and Definition~\ref{def:almost-reducible} imply that the values $i, i+1, w(i), a, b$ appear from left to right in the one-line notation for $w$ and form an occurrence of the pattern $34521$, where $a,b$ are the smallest two elements of $\{1,\ldots,i+1\} \setminus \{w(1),\ldots,w(i-1)\}$. This is impossible by Theorem~\ref{thm:vertex-transitive} since $\Gamma(w)$ is assumed to be vertex transitive. Similarly, if $w$ were left-almost-reducible at $j$, then $w$ would contain an occurrence of the pattern $54123$, again violating Theorem~\ref{thm:vertex-transitive}. Thus $A_R(w)=A_L(w)=e$, and $v(w)=w_0(D_L(w))\cdot w_0(D_L(w)\cap D_R(w)) w_0(D_R(w))$ which is the expression for $w$ as a Bruhat irreducible almost-polished element. Since $\Gamma(w)$ is vertex-transitive, we have $\mc{O}=[e,w]=[e,v(w)]$.
\end{proof}

The following proposition shows that the element $v(w)$ is indeed in the identity orbit of $\Gamma(w)$. An automorphism of $\Gamma(w)$ sending $e$ to $v(w)$ may be obtained by composing various left, right, and middle multiplication automorphisms (see Section~\ref{sec:automorphisms}).

\begin{prop}
\label{prop:v-is-in-orbit}
Let $w \in \mathfrak{S}_n$ be Bruhat irreducible and let $\mc{O}$ be the orbit of $e$ under graph automorphisms of $\Gamma(w)$, then $v(w) \in \mc{O}$.
\end{prop}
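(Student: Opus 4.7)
The plan is to construct an explicit automorphism $\Phi \in \aut(\Gamma(w))$ with $\Phi(e) = v(w)$ by composing the three types of automorphisms introduced in Section~\ref{sec:automorphisms}. For each right-almost-reducible index $i$, the BP-decomposition with $J = \{s_i, \ldots, s_{n-1}\}$ satisfies $\supp(w^J) \cap J = \{s_i\}$, and since $s_i \in D_L(w_J)$ forces $s_i \in \supp(w_J)$, Proposition~\ref{prop:middle-mult-is-automorphism} provides a middle multiplication automorphism $\phi_i : x \mapsto x^J s_i x_J$. Analogously, each left-almost-reducible index $j$ gives an automorphism $\psi_j$ via $J = \{s_1, \ldots, s_j\}$. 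Moreover, $L_u$ is an automorphism of $\Gamma(w)$ for every $u \in W_{D_L(w)}$ and $R_v$ for every $v \in W_{D_R(w)}$, obtained by composing the elementary automorphisms from Proposition~\ref{prop:right-left-multiplication-give-autos}.

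Set $\Psi := \psi_{j_t} \circ \cdots \circ \psi_{j_1} \circ \phi_{i_k} \circ \cdots \circ \phi_{i_1}$; I claim $\Psi(e) = A_R(w) \cdot A_L(w)$. By induction, after applying $\phi_{i_1}, \ldots, \phi_{i_\alpha}$ to $e$ the current element is $s_{i_1} \cdots s_{i_\alpha}$: at the next step, this element lies in $W^{J_{\alpha+1}}$ for $J_{\alpha+1} = \{s_{i_{\alpha+1}}, \ldots, s_{n-1}\}$, since each $i_\beta < i_{\alpha+1}$ places $s_{i_\beta} \notin J_{\alpha+1}$ and the $s_{i_\beta}$'s pairwise commute (consecutive right-almost-reducibility is impossible, as follows from the positional conditions in Proposition~\ref{prop:almost-sep-supp}). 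Hence $\phi_{i_{\alpha+1}}$ collapses to right multiplication by $s_{i_{\alpha+1}}$, yielding $s_{i_1} \cdots s_{i_{\alpha+1}}$. After all $\phi$'s the element is $A_R(w)$. Each subsequent $\psi_{j_\beta}$ similarly acts as right multiplication by $s_{j_\beta}$: one uses Corollary~\ref{cor:i-commute-with-j} to commute $s_{j_\beta}$ past every $s_{i_\alpha}$ and the analogous non-consecutivity of left-almost-reducible indices to commute $s_{j_\beta}$'s among themselves, so the final result is $\Psi(e) = A_R(w) \cdot A_L(w)$.

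Now define $\Phi := L_A \circ R_B \circ \Psi$ with $A := w_0(D_L(w)) \cdot w_0(D_L(w) \cap D_R(w)) \in W_{D_L(w)}$ and $B := w_0(D_R(w)) \in W_{D_R(w)}$. Then
\[
\Phi(e) = A \cdot \Psi(e) \cdot B = w_0(D_L(w)) \cdot w_0(D_L(w) \cap D_R(w)) \cdot A_R(w) \cdot A_L(w) \cdot w_0(D_R(w)).
\]
Applying the pairwise commutativity of the triple $(A_R(w), A_L(w), w_0(D_L(w) \cap D_R(w)))$ established in the corollary immediately preceding Conjecture~\ref{conj:strengthened-orbit-conjecture}, we swap $w_0(D_L(w) \cap D_R(w))$ with $A_R(w)$ to recover exactly the defining expression for $v(w)$.

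The main obstacle is the careful tracking of parabolic decompositions at each intermediate stage of $\Psi$, verifying that the middle-multiplication automorphisms collapse at each step to plain right multiplication by the relevant simple reflection. The key structural inputs are that right-almost-reducible (respectively, left-almost-reducible) indices cannot be consecutive and that any right-almost-reducible index differs from any left-almost-reducible index by at least two, ensuring all the commutations used above.
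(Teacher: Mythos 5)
Your overall plan — compose middle multiplications (Prop.~\ref{prop:middle-mult-is-automorphism}) with left/right multiplications (Prop.~\ref{prop:right-left-multiplication-give-autos}) to carry $e$ to $v(w)$ — is the same as the paper's, and the claimed intermediate value $\Psi(e)=A_R(w)A_L(w)$ is in fact correct. However, the argument as written relies on a false lemma: \emph{consecutive} right-almost-reducible (or left-almost-reducible) indices \emph{do} occur. For instance $w=3456721\in\mathfrak{S}_7$ is Bruhat irreducible with $D_L(w)=\{s_1,s_2\}$, $D_R(w)=\{s_5,s_6\}$, and it is right-almost-reducible at both $i=3$ and $i=4$: with $J=\{s_3,\dots,s_6\}$ one finds $w^J=3412567$, $w_J=1256743$, $\supp(w^J)\cap J=\{s_3\}\subseteq D_L(w_J)=\{s_3,s_4\}$, and with $J'=\{s_4,s_5,s_6\}$ one finds $w^{J'}=3451267$, $w_{J'}=1236754$, $\supp(w^{J'})\cap J'=\{s_4\}\subseteq D_L(w_{J'})=\{s_4,s_5\}$. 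Thus $s_3,s_4$ are both in the list and do not commute; Corollaries~\ref{cor:i-commute-with-descent} and \ref{cor:i-commute-with-j} say nothing about commutativity \emph{within} the set of right- (or left-) almost-reducible indices, and no such commutativity holds.

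For the $\phi_{i_\alpha}$'s this is harmless, since your support observation ($i_\beta<i_{\alpha+1}$ forces $\supp(s_{i_1}\cdots s_{i_\alpha})\cap J_{\alpha+1}=\emptyset$, hence $s_{i_1}\cdots s_{i_\alpha}\in W^{J_{\alpha+1}}$) does not actually need the extra commutativity. But for the $\psi_{j_\beta}$'s the gap is material: taking $w^{-1}=7612345$ (left-almost-reducible at $3$ and $4$, $A_L=s_4s_3$), one has $\psi_3(e)=s_3$, and since $s_3\in W_{K_4}$ with $K_4=\{s_1,\dots,s_4\}$, $\psi_4(s_3)=s_4s_3$ — \emph{not} $s_3s_4$. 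So $\psi_4$ is not right multiplication, and the commutation $s_3s_4=s_4s_3$ you invoke to identify $s_{j_1}\cdots s_{j_t}$ with $A_L(w)=s_{j_t}\cdots s_{j_1}$ fails. The correct bookkeeping (and what the paper's phrase ``equivalent to left multiplication'' is getting at) is that $\psi_{j_\beta}$ applied to $y=A_R(w)\,s_{j_{\beta-1}}\cdots s_{j_1}$ has $y_{K_\beta}$ ending in $s_{j_{\beta-1}}\cdots s_{j_1}$, so $s_{j_\beta}$ is inserted to their \emph{left}; after commuting $s_{j_\beta}$ past the low-index piece of $A_R(w)$ one gets $A_R(w)\,s_{j_\beta}s_{j_{\beta-1}}\cdots s_{j_1}$. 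The middle multiplications therefore build $A_L(w)$ in the decreasing order automatically, with no commutativity among the $s_{j_\beta}$'s required. With that correction the rest of your argument (composing with $L_A$, $R_B$ and using the commutativity of $A_R(w)$ with $w_0(D_L(w)\cap D_R(w))$) goes through.
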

\begin{proof}
By Proposition~\ref{prop:right-left-multiplication-give-autos} we may compose left (or right) multiplication automorphisms to send $e$ to $w_0(D_L(w) \cap D_R(w))$. Then, for each $i$ such that $w$ is right-almost-reducible at $i$, by Proposition~\ref{prop:middle-mult-is-automorphism} we may apply the automorphism of middle multiplication by $s_i$, doing so in the order $i_k > i_{k-1} > \cdots > i_1$, and similarly for indices $j_1 < \cdots < j_t$ at which $w$ is left-almost-reducible. Since all of these $s_i$ and $s_j$ commute with each other and with the simple generators in $D_L(w) \cap D_R(w)$ by Corollaries~\ref{cor:i-commute-with-descent} and \ref{cor:i-commute-with-j}, the middle multiplication is equivalent to left multiplication at each stage, and the resulting product is equal to $A_R(w)w_0(D_L(w) \cap D_R(w))A_L(w)$. Finally, applying left and right multiplication by $w_0(D_L(w))$ and $w_0(D_R(w))$ respectively, we obtain an automorphism sending $e$ to $v(w)$.
\end{proof}

\section{Special matchings and Bruhat automorphisms}
\label{sec:special-matchings}
\subsection{The connection to special matchings}
The proof of Proposition~\ref{prop:v-is-in-orbit} and Conjecture~\ref{conj:strengthened-orbit-conjecture} would together imply that the left, right, and middle multiplication automorphisms suffice to determine the identity orbit of $\Gamma(w)$ under graph automorphisms. Left or right multiplication by a descent of $w$ determines a special matching of the Hasse diagram $H([e,w])$ of $[e,w]$, essentially by construction; in Proposition~\ref{prop:middle-mult-is-special-matching} below, we observe that the same is true for middle multiplication. This result could also be obtained from the classification of special matchings of lower Bruhat intervals given in \cite{SM-first-lower-classification, SM-lower-classification}.

\begin{prop}
\label{prop:middle-mult-is-special-matching}
Suppose $w=w^Jw_J$ is a BP-decomposition of $w$ and in addition we have $\supp(w^J) \cap \supp(w_J) =\{s\}$, then the middle multiplication map 
\[
\phi: x \mapsto x^J s x_J, 
\]
is a special matching of $[e,w]$.
\end{prop}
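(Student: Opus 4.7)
The plan is to verify the two defining properties of a special matching (Definition~\ref{def:special-matching}) for $\phi$: that $\phi$ is a perfect matching of the Hasse diagram $H([e,w])$, and that for every cover $x \lessdot y$ in $[e,w]$ one has $\phi(x)=y$ or $\phi(x) < \phi(y)$.

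For the first property, I use the parabolic-decomposition bijection $[e,w] \cong ([e,w^J]\cap W^J)\times [e,w_J]$ of Proposition~\ref{prop:BP-equivalences} to realize $\phi$ as ``identity on the first coordinate, left multiplication by $s$ on the second.'' Since $s \in D_L(w_J)$ by the BP hypothesis, the Lifting Property ensures that left multiplication by $s$ restricts to a fixed-point-free involution on $[e,w_J]$ pairing $x_J$ with $sx_J$. Hence $\phi$ is a fixed-point-free involution of $[e,w]$ sending $x$ to $x \cdot (x_J^{-1} s x_J)$, which differs from $x$ by a reflection and by exactly one in length---i.e., by a Bruhat cover---so $\phi$ is a perfect matching of $H([e,w])$.

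For the second property, fix a cover $x \lessdot y$. By Proposition~\ref{prop:middle-mult-is-automorphism}, $(\phi(x),\phi(y))$ is an edge of $\Gamma(w)$, so $\phi(x)$ and $\phi(y)$ are distinct and Bruhat-comparable. Writing $\ell(\phi(x))=\ell(x)+\sigma_x$ and $\ell(\phi(y))=\ell(y)+\sigma_y$ with $\sigma_z \in \{-1,+1\}$ determined by whether $s \in D_L(z_J)$, the case $\sigma_x \leq \sigma_y$ forces $\phi(x)<\phi(y)$ immediately by comparability. The only remaining case is $\sigma_x=+1,\,\sigma_y=-1$, meaning $s \notin D_L(x_J)$ and $s \in D_L(y_J)$, and I claim this forces $\phi(x)=y$. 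Revisiting the three-way sub-case analysis in the proof of Proposition~\ref{prop:middle-mult-is-automorphism}: the sub-case $x_J = y_J$ would give $\sigma_x=\sigma_y$ and is ruled out; the sub-case $x^J \neq y^J,\, x_J \neq y_J$ is also ruled out, because there the structural identity $x_J = sy_J$ together with $\ell(x) = \ell(y) - 1$ and $x^J \leq y^J$ forces $\ell(x_J) = \ell(y_J)+1$, i.e.\ $s \notin D_L(y_J)$, contradicting $\sigma_y=-1$. Thus $x^J = y^J$ and $x_J \lessdot y_J$ in $W_J$. Applying the Lifting Property to $s \in D_L(y_J) \setminus D_L(x_J)$ and $x_J < y_J$ yields $x_J < sy_J < y_J$, so the cover forces $sy_J = x_J$, and therefore $\phi(x) = x^J s x_J = y^J y_J = y$, as required.

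The main obstacle is the last case analysis, which relies on the parabolic-decomposition bookkeeping already worked out for Bruhat covers in the proof of Proposition~\ref{prop:middle-mult-is-automorphism} (particularly the identification $(x^J,x_J) = (z's,\,sy_J)$ in the case where neither coordinate agrees). Given that structural input, the special matching condition becomes essentially a length-and-comparability refinement of the already-established fact that $\phi$ is a graph automorphism of $\Gamma(w)$.
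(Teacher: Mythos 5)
Your proof is correct and takes essentially the same approach as the paper: both use Proposition~\ref{prop:middle-mult-is-automorphism} to reduce to showing that when the lengths flip, the only possibility is $\phi(x)=y$, and both finish with the Lifting Property on $x_J < y_J$. The one place you diverge in route is the reduction to the case $x^J=y^J$: you revisit the three sub-cases from the proof of Proposition~\ref{prop:middle-mult-is-automorphism} and rule out the ``mixed'' case $x^J\neq y^J,\ x_J\neq y_J$ by a length count, whereas the paper gets $x^J=y^J$ more quickly by applying monotonicity of the projection $x\mapsto x^J$ (Proposition~\ref{prop:monotonicity-of-projection}) to both $x\leq y$ and $\phi(y)<\phi(x)$, yielding $x^J\leq y^J\leq x^J$. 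Both arguments are valid; the paper's is a little slicker but yours is self-contained given what you'd already proved.
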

\begin{proof}
Clearly $\phi$ gives a perfect matching on $H([e,w])$, so it suffices to check that for $x \lessdot y \in [e,w]$ we have $\phi(x)=y$ or $\phi(x)<\phi(y)$. By Proposition~\ref{prop:middle-mult-is-automorphism}, $\phi$ is an automorphism of $\Gamma(w)$, so $\phi(x),\phi(y)$ differ by multiplication by a reflection, and in particular either $\phi(x)<\phi(y)$ or $\phi(y)<\phi(x)$. In the first case we are done, so suppose $\phi(y)<\phi(x)$. By Proposition~\ref{prop:monotonicity-of-projection}, and by the construction of middle multiplication, we have:
\[
y^J = \phi(y)^J \leq \phi(x)^J = x^J.
\]
Since $x^J \leq y^J$, we must in fact have $x^J=y^J$.

Now, $\phi(y)_J=sy_J$ and $\phi(x)_J=sx_J$; since $\phi(y)<\phi(x)$ and $\phi(y)^J=\phi(x)^J$, it must be that $sy_J<sx_J$. We also know $x_J < y_J$, so by the Lifting Property we must have $sy_J=x_J$ and thus $\phi(y)=x$. 
\end{proof}

The fact that left, right, and middle multiplication determine special matchings, and the conjectural fact that they determine at least the identity orbit structure of Bruhat graphs, suggest a close connection between special matchings and automorphisms of Bruhat graphs. This connection is made explicit in Theorem~\ref{thm:SM-equals-automorphism-classical-type}, whose proof occupies the remainder of this section, and in Conjecture~\ref{conj:SM-equals-automorphism-general}.

\subsection{Special matching are Bruhat automorphisms}
Theorem~\ref{thm:SM-implies-auto} provides one direction of Theorem~\ref{thm:SM-equals-automorphism-classical-type} and Conjecture~\ref{conj:SM-equals-automorphism-general} for arbitrary Coxeter groups. This implies that special matchings on Bruhat intervals, although defined by a local condition (that is, a condition on cover relations), respect the global structure of Bruhat graphs.

\begin{remark}
We thank Mario Marietti for alerting us to the fact that, although it is stated only for lower intervals $[e,v]$, the proof of Theorem 10.3 in \cite{SM-advances} also applies to general intervals $[u,v]$ and yields Theorem~\ref{thm:SM-implies-auto}. The proof given there is very similar to our proof, which we include below for the reader's convenience.
\end{remark}

We first prove Lemma~\ref{lem:undirected-hexagon}, an extension to $\Gamma$ of a property of $\widehat{\Gamma}$ given in the proof of Proposition 3.3 of \cite{dyer-bruhat-graph}. 

\begin{lemma}
\label{lem:undirected-hexagon}
Let $u \leq v$ be elements of a Coxeter group $W$ and suppose that there exist elements $x_1,\ldots,x_6 \in [u,v]$ such that $\overline{x_1x_2}, \overline{x_1x_3}, \overline{x_2x_4}, \overline{x_2x_5}, \overline{x_3x_4}, \overline{x_3x_5}, \overline{x_4x_6}, \overline{x_5x_6}$ are edges in $\Gamma(u,v)$. Then $\overline{x_1x_6}$ is an edge in $\Gamma(u,v)$.
\end{lemma}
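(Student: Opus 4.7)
The plan is to reduce the lemma to showing $x_1^{-1}x_6\in T$, which will immediately yield the desired edge of $\Gamma(u,v)$ since $x_1, x_6 \in [u,v]$ and $\Gamma(u,v)$ is an induced subgraph of $\Gamma$. For each given edge $\overline{x_i x_j}$ set $t_{ij}:=x_i^{-1}x_j\in T$ (these are involutions, so $t_{ji}=t_{ij}$). Comparing the two paths $x_1 \to x_2 \to x_4$ and $x_1 \to x_3 \to x_4$ gives
\[
t_{12}t_{24}\;=\;x_1^{-1}x_4\;=\;t_{13}t_{34},
\]
and similarly $t_{12}t_{25}=t_{13}t_{35}$; comparing the three paths from $x_4$ to $x_5$ through $x_2$, $x_3$, and $x_6$ yields $t_{24}t_{25}=t_{34}t_{35}=t_{46}t_{56}$.

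The key structural input, extending the argument in the proof of \cite[Proposition 3.3]{dyer-bruhat-graph} to the undirected setting, is the following dihedral-subgroup fact: if reflections $s_\alpha,s_\beta,s_\gamma,s_\delta$ in a Coxeter group satisfy $s_\alpha s_\beta=s_\gamma s_\delta\neq 1$, then in the Tits representation all four roots lie in a common 2-plane (the orthogonal complement of the fixed subspace of the common rotation), and the reflections with roots in that plane generate a dihedral reflection subgroup. Applying this to the three rotation identities above places the sets $\{t_{12},t_{13},t_{24},t_{34}\}$, $\{t_{12},t_{13},t_{25},t_{35}\}$, and $\{t_{24},t_{25},t_{34},t_{35},t_{46},t_{56}\}$ each inside a dihedral reflection subgroup. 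Consecutive subgroups share two distinct reflections (for instance $t_{12}$ and $t_{13}$ for the first pair, $t_{24}$ and $t_{34}$ for the first and third); since two distinct reflections of a dihedral subgroup have linearly independent roots that span its 2-plane, the three 2-planes must coincide, and hence all eight reflections lie in a single dihedral reflection subgroup $D\subseteq W$.

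With this in hand, $x_1^{-1}x_6 = t_{12}t_{24}t_{46}\in D$. In any dihedral group the reflections form the nonidentity coset of the index-two rotation subgroup, so the product of any odd number of reflections is again a reflection; therefore $x_1^{-1}x_6 \in T$, giving the required edge.

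The main obstacle is the dihedral amalgamation step, that is, verifying that the three a priori different dihedral subgroups obtained from the three rotation identities actually coincide. Its validity is type-independent: it reduces to the observation that the 2-plane associated to a dihedral reflection subgroup is determined by the fixed subspace of any one of its nontrivial rotations in the faithful Tits representation, so two distinct reflections already pin down the subgroup. This makes the whole argument uniform across all Coxeter groups, with no restriction on the type of $W$.
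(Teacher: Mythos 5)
Your proof is correct and follows essentially the same route as the paper: reduce to a dihedral reflection subgroup (the key input being the argument from Dyer's Proposition 3.3) and then conclude by a parity observation. The two small differences in presentation are worth noting. First, you flesh out the "dihedral reduction" step by writing down the rotation identities $t_{12}t_{24}=t_{13}t_{34}$, etc., and performing the 2-plane amalgamation explicitly, whereas the paper simply cites Dyer's Prop.~3.3 argument as the source of the claim that the eight reflections generate a dihedral reflection subgroup $W'$. Second, at the end you directly observe that $x_1^{-1}x_6$ is a product of three elements of $T\cap W'$, hence lies in the odd-length coset of the dihedral group $W'$ and so is a reflection; the paper instead invokes Dyer's Theorem~1.4 (the Bruhat graph of $W'$ equals the induced subgraph $\Gamma|_{W'}$) to reduce to the dihedral case, where the edge criterion is "length difference odd." These are the same parity argument; your formulation avoids the explicit appeal to Theorem~1.4 by using the fact that $T\cap W'$ is exactly the set of reflections of $W'$ as a Coxeter group, a point worth stating since it is what guarantees $x_1^{-1}x_6\in T$ and not merely in $W'$.

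One caveat, which applies equally to the paper's own proof: both arguments implicitly assume a nondegenerate configuration, in particular $x_2\neq x_3$ and $x_4\neq x_5$ (so that the rotation identities are nontrivial and the generated subgroup really is dihedral). Without this hypothesis the statement is actually false: in $\mathfrak{S}_4$ take $x_1=e$, $x_2=x_3=(12)$, $x_4=x_5=(12)(34)$, $x_6=(12)(34)(23)$; all eight specified "edges" exist, but $x_1^{-1}x_6=(1\,2\,4\,3)$ is a $4$-cycle, not a reflection. In the paper's two applications of the lemma (inside the proof of Theorem~\ref{thm:SM-implies-auto}) the elements $x_2,x_3$ and $x_4,x_5$ are constructed to be distinct, so no harm is done; but if you want a self-contained argument you should add these distinctness hypotheses explicitly.
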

\begin{proof}
Let $t_1,\ldots,t_8$ be the reflections corresponding to the known edges given in the statement of the lemma. The same argument as in Proposition 3.3 of \cite{dyer-bruhat-graph} implies that $W'=\langle t_1, \ldots, t_8 \rangle$ is a dihedral reflection subgroup of $W$. By Theorem 1.4 of \cite{dyer-bruhat-graph}, the Bruhat graph of $W'$ agrees with the induced subgraph $\Gamma|_{W'}$, so it suffices to check the lemma in the case $W$ is dihedral. In this case $\Gamma$ is easy to describe: we have $\overline{xy}$ if and only if $\ell(y)-\ell(x)$ is odd. By assumption, each of $\ell(x_2)-\ell(x_1), \ell(x_4)-\ell(x_2),$ and $\ell(x_6)-\ell(x_4)$ is odd, thus $\ell(x_6)-\ell(x_1)$ is also odd, and $\overline{x_1x_6}$ is an edge.
\end{proof}

\begin{theorem}[cf. Theorem 10.3 of \cite{SM-advances}]
\label{thm:SM-implies-auto}
Let $u \leq v$ be elements of a Coxeter group $W$. Any special matching $M$ of the Hasse diagram $H([u,v])$ is an automorphism of $\Gamma(u,v)$. 
\end{theorem}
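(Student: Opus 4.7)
The plan is to show that any special matching $M$ preserves $\Gamma$-edges by inducting on $k := \ell(y) - \ell(x)$ for an edge $\overline{xy}$ of $\Gamma(u,v)$, taking $x < y$ without loss of generality; such $k$ is necessarily odd since $y = xt$ for some reflection $t$. The primary tool throughout is Lemma~\ref{lem:undirected-hexagon}, which deduces a new $\Gamma$-edge from a hexagonal configuration of eight others.

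For the base case $k = 1$, so that $x \lessdot y$ is a Hasse cover, I would set $a = M(x)$ and $b = M(y)$. If $a = y$ the result is immediate; otherwise the special matching condition forces $a < b$. A case-split on whether $a$ and $b$ sit above or below $x$ and $y$ resolves three of the four cases by giving $a \lessdot b$ directly as a Hasse cover, or by deriving an immediate length contradiction. The remaining subcase is the chain $a \lessdot x \lessdot y \lessdot b$, in which $[a,b]$ is a rank-$3$ interval; by Proposition~\ref{prop:sm-restricts-to-interval}, $M$ restricts to a special matching of $[a,b]$. Applying the special matching condition to covers $a \lessdot x_i$ for atoms $x_i \neq x$ of $[a,b]$ forces $M(x_i) > x$, so $x$ lies below every coatom, and thinness of the rank-$2$ Bruhat interval $[x,b]$ then yields exactly two coatoms; a symmetric argument gives exactly two atoms $\{x, x'\}$ and two coatoms $\{y, y'\}$, with all four atom-coatom pairs being Hasse covers. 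The hexagon $(a, x, x', y, y', b)$ then satisfies the hypotheses of Lemma~\ref{lem:undirected-hexagon}, yielding $\overline{ab} = \overline{M(x)M(y)}$ as a $\Gamma$-edge.

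For the inductive step $k \geq 3$, I would exhibit a hexagon $(x; z_1, z_2; z_3, z_4; y)$ inside $[x,y] \subseteq [u,v]$ with $z_1, z_2$ atoms of $[x,y]$ and $z_3, z_4$ coatoms, such that each $\overline{z_i z_j}$ (for $i \in \{1,2\}$, $j \in \{3,4\}$) is a $\Gamma$-edge of length difference $k - 2$. The inductive hypothesis then preserves each of the eight hexagon edges under $M$, and Lemma~\ref{lem:undirected-hexagon} applied to the image hexagon gives $\overline{M(x)M(y)}$ as a $\Gamma$-edge. The hexagon itself is produced by passing to a dihedral reflection subgroup $W' = \langle r_1, r_2 \rangle$ canonically containing $t$, setting $z_1 = xr_1$, $z_2 = xr_2$ to be the two atoms of the dihedral subinterval $xW' \cap [x,y]$ and $z_3 = yr_2$, $z_4 = yr_1$ its two coatoms; by Dyer's Theorem~1.4 (cited in the proof of Lemma~\ref{lem:undirected-hexagon}) the induced Bruhat graph on $xW'$ coincides with the intrinsic dihedral one, so the required eight $\Gamma$-edges are in place.

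The main obstacle is the construction of the hexagon in the inductive step: one must identify an appropriate $W'$ and verify that $xr_1, xr_2$ are genuine Bruhat covers of $x$ in the ambient $W$ (not merely in $W'$), dually for $yr_1, yr_2$, and that all six hexagon vertices lie in $[x,y]$. This rests on Dyer's structure theory of reflection subgroups and on the compatibility between ambient and restricted Bruhat orders. The base case, although more elementary, also requires careful use of the special matching condition to pin down the rigid dihedral-hexagon structure of the rank-$3$ interval $[a,b]$, and that structural fact essentially reduces to verifying the $k=1$ case of the theorem directly from Lemma~\ref{lem:undirected-hexagon}.
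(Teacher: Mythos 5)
Your proof is correct and takes essentially the same approach as the paper's: induct on the length gap $k=\ell(y)-\ell(x)$, handle the $k=1$ base case by extracting the rank-$3$ hexagon from the special-matching structure and applying Lemma~\ref{lem:undirected-hexagon}, and in the inductive step invoke the intermediate hexagon supplied by the proof of Dyer's Proposition~3.3, apply the inductive hypothesis to each of its eight edges, and conclude via Lemma~\ref{lem:undirected-hexagon} on the image hexagon. The one worry you flag---whether $xr_1,xr_2$ are genuine Bruhat covers of $x$ in the ambient group---is immaterial: the induction only needs each hexagon edge to have length gap strictly less than $k$, and this holds automatically because the four intermediate vertices have ambient lengths strictly between $\ell(x)$ and $\ell(y)$.
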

\begin{proof}
Let $M$ be a special matching of $H([u,v])$. We will prove by induction on $k$ that if $\overline{xy}$ is an edge of $\Gamma(u,v)$ with $\ell(y)-\ell(x)=k$, then $\overline{M(x)M(y)}$ is also an edge.

Consider first the case $k=1$, so $x \lessdot y$. If $M(x)=y$ or $|\ell(M(y))-\ell(M(x))|=1$, we are done by the defining property of special matchings, so assume that $M(x)\lessdot x$ and $y \lessdot M(y)$. Since all height-two intervals in Bruhat order are diamonds, there exist elements $M(x) \lessdot x' \neq x$ and $y \neq y' \lessdot M(y)$. Since $M$ is a special matching, we must have $M(x) \lessdot M(y') \lessdot y$. Again applying the diamond property to $[M(x),y]$, we conclude that $M(y')=x'$, so in particular $x' \lessdot y'$. Now, the elements $M(x),x,x',y,y',M(y)$ form a subgraph of $\Gamma$ of the type described in Lemma~\ref{lem:undirected-hexagon}, and so $\overline{M(x)M(y)}$ is an edge by the lemma.

Suppose now that $k>1$. By the proof of Proposition 3.3 from \cite{dyer-bruhat-graph}, we know that there exist elements $x_2,x_3,x_4,x_5$ with directed edges $x=x_1 \to x_2,x_3; x_2\to x_4,x_5; x_3 \to x_4,x_5; x_4,x_5 \to x_6=y$ in $\widehat{\Gamma}(u,v)$. By induction, we know that $\overline{M(x_i)M(x_j)}$ is an edge in $\Gamma(u,v)$ for each of these edges $x_i \to x_j$. Thus $M(x_1),\ldots,M(x_6)$ form a subgraph of $\Gamma$ of the type described in Lemma~\ref{lem:undirected-hexagon}, and so again we conclude that $\overline{M(x)M(y)}$ is an edge.
\end{proof}

\subsection{Bruhat automorphisms are special matchings}

In Theorem~\ref{thm:auto-implies-SM} below we give a converse to Theorem~\ref{thm:SM-implies-auto} for certain Coxeter groups. Theorem~\ref{thm:SM-implies-auto} and Theorem~\ref{thm:auto-implies-SM} together imply Theorem~\ref{thm:SM-equals-automorphism-classical-type}.

\begin{theorem}
\label{thm:auto-implies-SM}
Let $u \leq v$ be elements of a Coxeter group $W$ which is right-angled or a symmetric group, then any perfect matching of $H([u,v])$ which is an automorphism of $\Gamma(u,v)$ is a special matching.
\end{theorem}

The proof of Theorem~\ref{thm:auto-implies-SM} relies on the following structural property of Bruhat order, Lemma~\ref{lem:butterfly}, whose proof is contained in Section~\ref{sec:butterflies}. 

\begin{defin}\label{def:butterfly}
We say that elements $x_1,x_2,y_1,y_2$ of a Coxeter group $W$ form a \emph{butterfly} if $x_1 \lessdot y_1,y_2$ and $x_2 \lessdot y_1,y_2$.
\end{defin}
The butterfly structures are essential to the analysis of Bruhat automorphisms and special matchings, and are of interest on their own. We will explore more about butterflies in Section~\ref{sec:butterflies}.

\begin{lemma}
\label{lem:butterfly}
Let $W$ be a Coxeter group which is right-angled or the symmetric group, let $u \leq v$, and suppose that $x_1,x_2,y_1,y_2 \in [u,v]$ form a butterfly. Then there is an element $z \in [u,v]$ with $y_1,y_2 \lessdot z$.  
\end{lemma}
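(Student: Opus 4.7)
The plan is to identify a candidate $z$ explicitly using a reflection subgroup analysis, and then to verify $z \le v$ using tools specific to each class of Coxeter group. By the same type of argument used in the proof of Lemma~\ref{lem:undirected-hexagon}, the four reflections $x_i^{-1}y_j$ generate a dihedral reflection subgroup $W' \subseteq W$; by Dyer's theory for reflection-subgroup cosets, the butterfly elements lie in a single left coset $x_0 W'$ with Bruhat order on this coset matching the intrinsic order on $W'$. Within $W'$ (viewed as a dihedral group), any two elements of a common rank $k+1$ admit at least one common upper cover of rank $k+2$, giving a candidate $z \in W$ with $y_1,y_2 \lessdot z$. The main work is then to show that some such $z$ satisfies $z \le v$.

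For the symmetric group: the dihedral $W'$ must be isomorphic to $\mathfrak{S}_3$ acting on three positions $\{p,q,r\}$, since in $\mathfrak{S}_n$ any dihedral subgroup generated by transpositions is either $I_2(2)$ (from disjoint transpositions, which cannot support a butterfly due to insufficient rank) or $I_2(3) = \mathfrak{S}_3$ (transpositions sharing one point). In particular $x_1^{-1}x_2$ is a $3$-cycle, and the unique common cover is $z = x_0 \cdot w_0(W')$. All six coset elements agree outside positions $\{p,q,r\}$ and contain the same multiset of values at these positions. I would verify $z \le v$ using the tableau criterion for Bruhat order in $\mathfrak{S}_n$, namely that $u \le v$ iff the sorted first-$k$ values of $u$ are componentwise $\le$ those of $v$ for every $k$. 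A direct calculation on the six coset elements shows that $z$'s sorted first-$k$ list equals the componentwise maximum of those of $y_1$ and $y_2$ for every $k$; hence $v$'s sorted lists, which componentwise dominate those of $y_1$ and $y_2$ by hypothesis, also dominate those of $z$, giving $z \le v$.

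For the right-angled case: since $I_2(2)$ has insufficient rank to support a butterfly, $W' \cong I_2(\infty)$, and there are two candidates $z_1,z_2$ for the common cover. Consider first the case where $x_0 = e$ and $t_1,t_2$ are simple generators. Fix a reduced word for $v$; the crucial combinatorial fact is that the subsequence of $\{t_1,t_2\}$-letters in this reduced word is alternating, because two consecutive occurrences of the same $t_i$ in this subsequence could only be separated by letters commuting with $t_i$ (given that $W$ is right-angled and $t_1,t_2$ generate infinite dihedral), allowing the two to be brought adjacent and canceled, contradicting reducedness. The assumption $y_1, y_2 \le v$ then forces this alternating subsequence to have length at least $k+2$, since one needs alternating subwords of length $k+1$ starting with $t_1$ and with $t_2$ respectively to realize $y_1$ and $y_2$ via the Subword Property. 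A length-$(k+2)$ alternating subsequence in turn contains one of $z_1,z_2$ as a subword, yielding $z \le v$.

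The main obstacle will be extending the right-angled analysis to the general coset setting where $x_0$ is nontrivial or $t_1,t_2$ are non-simple reflections. One must use Dyer's structure theorem to identify the intrinsic simple reflections of $W' \cong I_2(\infty)$ inside $W$ and perform the letter-sequence analysis in a coset-compatible way, likely by working with a reduced word for $x_0^{-1}v$ or adapting the projection argument to non-parabolic dihedral subgroups. By comparison, the symmetric-group case is comparatively cleaner thanks to the strong combinatorial characterization of Bruhat order via the tableau criterion.
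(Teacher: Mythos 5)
The right-angled case contains a concrete false step. You assert that, when $x_0=e$ and $t_1,t_2$ are simple, the subsequence of $\{t_1,t_2\}$-letters in a fixed reduced word for $v$ must be alternating, on the grounds that two consecutive equal letters in this subsequence would have to be separated only by letters commuting with $t_i$. That justification does not hold: the separating letters are arbitrary elements of $S\setminus\{t_1,t_2\}$ and need not commute with $t_i$. The claim is false even under the butterfly hypothesis: take $W$ right-angled with $S=\{a,b,c\}$ and no commutation relations, $u=e$, $v=bacab$; then $x_1=a$, $x_2=b$, $y_1=ab$, $y_2=ba$ form a butterfly in $[e,v]$ (both $ab,ba\le bacab$ by the Subword Property), yet the $\{a,b\}$-subsequence of the unique reduced word $bacab$ is $b,a,a,b$, which is not alternating. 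Your endgame can be salvaged by replacing ``alternating'' with a count of maximal constant runs: if the binary word $w$ contains alternating subwords of length $k+1$ beginning with each of $t_1$ and $t_2$, then $w$ has at least $k+2$ runs, hence contains an alternating subword of length $k+2$ beginning with whichever letter starts the first run. But even so repaired, this only handles your ``simple'' case, and you acknowledge without resolving the general coset setting, where $x_0\ne e$ or the canonical generators of $W'$ are not simple in $W$. That is in fact where the substance lies: the paper first proves a normal-form lemma (Lemma~\ref{lem:butterfly-right-angle-structure}) showing that every butterfly in a right-angled group takes the length-additive form $x_1=u'\cdot A^{(m)}(s,s')\cdot v'$, $x_2=u'\cdot A^{(m)}(s',s)\cdot v'$ with $s,s'$ simple in $W$, and then argues by a descent-based induction using the Lifting Property (Lemma~\ref{lem:butterfly-right-angle-join}) rather than a subword count, precisely because the factors $u',v'$ prevent a clean projection onto the $\{s,s'\}$-subword.

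For the symmetric group your outline is essentially the paper's: identify $z$ as the top element of the $\mathfrak{S}_3$-coset and show $z\le v$ by comparing sorted prefixes (equivalently, the rank matrix) of $z$ with the componentwise maximum over $y_1,y_2$. However, the ``direct calculation'' you defer is exactly the nontrivial content of the argument: the identity $z[a,b]=\max(y_1[a,b],y_2[a,b])$ is not a generic property of $\mathfrak{S}_3$-cosets. It requires knowing that the permutation matrix of $z$ has no dots strictly inside the relevant rectangles, which is forced only by the assumption that $y_1,y_2\lessdot z$ are covers in $W$ (a fact the paper extracts from Lemma~\ref{lem:weyl-cover}); without that analysis the claim is unproven.
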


\begin{proof}[Proof of Theorem~\ref{thm:auto-implies-SM}]
Let $u \leq v$ be elements of a Coxeter group $W$ which is right-angled or the symmetric group, and let $M$ be a perfect matching of $H([u,v])$ which is an automorphism of $\Gamma(u,v)$. Suppose that $M$ is not a special matching; since $M$ is a $\Gamma(u,v)$-automorphism, the violation of the special matching property must consist of elements $x \lessdot y$ with $M(y) \lessdot M(x)$. Choose $x,y$ so that $y$ has maximal length among all such violations in $[u,v]$. 

Now, note that $x,M(y),y,M(x)$ form a butterfly, so by Lemma~\ref{lem:butterfly} there exists an element $z \in [u,v]$ with $y,M(x) \lessdot z$. We must have $M(z)>z$, for otherwise each of $y,M(x),$ and $M(z)$ would each cover both $x$ and $M(y)$, but this substructure cannot occur in Bruhat order of a Coxeter group (see Theorem 3.2 of \cite{SM-advances}). Since height-two intervals in Bruhat order are diamonds (see Chapter 2 of \cite{bjorner-brenti}), there exists an element $w\neq z$ with $y \lessdot w \lessdot M(z)$.

Suppose that $M(w)<w$, then since $M$ is an automorphism of the Bruhat graph we must have $M(w) \lessdot z$ and $M(y) \lessdot M(w)$. Now, since $y \lessdot z$, we know $M(y) \to M(z)$ in $\widehat{\Gamma}(u,v)$, but the height-three interval $[M(y),M(z)]$ contains at least three elements---$y,M(w),$ and $M(x)$ at height one, contradicting Proposition 3.3 of \cite{dyer-bruhat-graph}.

We conclude that $w\lessdot M(w)$. However this too is a contradiction, since $w \lessdot M(z)$ is a violation of the special matching condition with $\ell(M(z))>\ell(y)$. Thus $M$ must be a special matching.
\end{proof}

We conjecture that a slight weakening of Lemma~\ref{lem:butterfly} holds for arbitrary Coxeter groups, and this would imply the same for Theorem~\ref{thm:auto-implies-SM}, and thus resolve Conjecture~\ref{conj:SM-equals-automorphism-general}.

\begin{conj}
\label{conj:general-butterfly}
Let $W$ be any Coxeter group, let $u \leq v \in W$, and suppose that the elements $x_1,x_2,y_1,y_2 \in [u,v]$ form a butterfly. Then there is an element $z \in [u,v]$ with $y_1,y_2 \lessdot z$ or with $z \lessdot x_1,x_2$.
\end{conj}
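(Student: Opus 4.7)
The plan is to reduce the conjecture to a question about dihedral reflection subgroups and then to leverage the freedom to choose between an upper or a lower bound in order to meet the ambient constraint $z \in [u,v]$. Given a butterfly $x_1, x_2, y_1, y_2$, write $y_1 = x_1 t_1 = x_2 t_3$ and $y_2 = x_1 t_2 = x_2 t_4$ for reflections $t_i$; comparing the two expressions for $x_2$ yields the identity $t_1 t_3 = t_2 t_4$ in $W$. Any upper bound $z$ of $y_1, y_2$ must satisfy $z = y_1 r = y_2 s$ for reflections $r, s$ with $rs = y_1^{-1} y_2 = t_1 t_2$, so $r, s$ must lie in the dihedral reflection subgroup $D_{12} := \langle t_1, t_2 \rangle$; analogously, any lower bound of $x_1, x_2$ corresponds to a pair of reflections in $D_{13} := \langle t_1, t_3 \rangle$.

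First I would invoke Theorem~1.4 of \cite{dyer-bruhat-graph} to identify the Bruhat order restricted to each coset $x_1 D_{12}$ and $x_1 D_{13}$ with the Bruhat order on the corresponding dihedral Coxeter group (via Dyer's reflection-subgroup theorem). Since the butterfly already sits inside each such coset, both dihedral subgroups have order at least six, and the zigzag structure of dihedral Bruhat order furnishes canonical candidates $z^{\mathrm{up}} \in x_1 D_{12}$ covering both $y_1$ and $y_2$ and $z^{\mathrm{lo}} \in x_1 D_{13}$ covered by both $x_1$ and $x_2$, unless one has reached the top or bottom of the dihedral chain (a case in which the butterfly itself would be degenerate). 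This reduces the conjecture to showing that at least one of $z^{\mathrm{up}} \leq v$ or $z^{\mathrm{lo}} \geq u$ holds.

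For the ambient constraint, I would argue by contradiction. If $z^{\mathrm{up}} \not\leq v$, then no reduced word for $v$ contains a reduced word for $z^{\mathrm{up}}$ as a subword, giving a structural obstruction which, combined with the relation $t_1 t_3 = t_2 t_4$ and the Exchange Property, should produce a reduced word for $u$ inside the chain $x_1 D_{13}$ witnessing $z^{\mathrm{lo}} \geq u$. The expected mechanism is a Bruhat duality on $[u,v]$ exchanging upper and lower bound failures of the butterfly; in the right-angled and symmetric group settings of Lemma~\ref{lem:butterfly} one always has $z^{\mathrm{up}} \leq v$ so the duality is invisible, but the broader formulation of the conjecture suggests that it emerges essentially in general type.

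The main obstacle is establishing this precise duality. The local dihedral analysis is routine, but ruling out the simultaneous failure of $z^{\mathrm{up}} \leq v$ and $z^{\mathrm{lo}} \geq u$ seems to require a global argument. A plausible route is an explicit case analysis inside rank-three reflection subgroups such as $A_3$, $B_3$, $H_3$, and $\widetilde{A}_2$, where butterflies first exhibit behavior not seen in dihedral or symmetric settings; a more conceptual alternative would be to derive the duality from the $R$-polynomial recursion that originally motivated the theory of special matchings, exploiting the fact that, by Theorem~\ref{thm:SM-implies-auto}, special matchings are already known to respect the full undirected Bruhat graph.
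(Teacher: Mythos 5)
The statement you are addressing is Conjecture~\ref{conj:general-butterfly}, which the paper leaves \emph{open}; there is no proof of it in the paper to compare against. What the paper does prove is the strictly stronger Lemma~\ref{lem:butterfly} (existence of an \emph{upper} bound $z$ with $y_1,y_2 \lessdot z$ inside $[u,v]$) for two families of Coxeter groups only: symmetric groups and right-angled groups. The remark following the conjecture records that this stronger upper-bound form genuinely fails outside those families (there is a butterfly in type $F_4$ admitting a lower bound in the interval but no upper bound), which is precisely why the conjecture is phrased with the ``upper or lower'' disjunction. So you should understand your task as attempting to resolve an open problem, not as reproving a known result.

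Your local setup is sound: by Dyer's reflection-subgroup theory, $t_1,t_2,t_3,t_4$ generate a dihedral reflection subgroup $W'$ (not of type $A_1\times A_1$, as a butterfly cannot embed in a diamond), the Bruhat order on the coset $x_1W'$ matches the Bruhat order of $W'$, and both a candidate $z^{\mathrm{up}}$ covering $y_1,y_2$ and a candidate $z^{\mathrm{lo}}$ covered by $x_1,x_2$ exist inside the coset. But that is where the proposal stops. The entire content of the conjecture is the global step you defer to the end: showing that $z^{\mathrm{up}}\not\leq v$ and $z^{\mathrm{lo}}\not\geq u$ cannot both happen. Your proposed ``Bruhat duality on $[u,v]$ exchanging upper and lower bound failures'' is never defined, and the hedged language (``should produce a reduced word,'' ``the expected mechanism,'' ``a plausible route'') acknowledges that the argument is absent. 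The Exchange Property together with the identity $t_1t_3=t_2t_4$ does not, on its own, convert a failure of the subword criterion for $z^{\mathrm{up}}\leq v$ into a subword certificate for $z^{\mathrm{lo}}\geq u$; no such mechanism is known, and indeed establishing one is exactly what would resolve the conjecture. The alternatives you mention (a rank-$3$ case analysis covering $A_3$, $B_3$, $H_3$, $\widetilde{A}_2$, etc., or deriving the claim from the $R$-polynomial recursion) are reasonable directions, but they would need to be carried out and shown to exhaust all cases, including infinite $W'$; as written, this is a research plan, not a proof.
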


\begin{remark}
The weakening of Lemma~\ref{lem:butterfly} conjectured for general Coxeter groups in Conjecture~\ref{conj:general-butterfly} is necessary even for finite Coxeter groups. For example, there exists a butterfly in the finite Coxeter group of type $F_4$ which has a lower bound $z \lessdot x_1,x_2$ but no upper bound $y_1,y_2 \lessdot z'$.
\end{remark}

\section{Covers of butterflies in Bruhat order}\label{sec:butterflies}

\subsection{Butterflies in finite Weyl groups}
Recall that a butterfly consists of four elements with $x_1,x_2\lessdot y_1,y_2$ in Bruhat order. We first do some general analysis on butterflies in finite Weyl groups.

Consider the transpositions $t_{ab}=x_a^{-1}y_b$ where $a,b\in\{1,2\}$. Then $t_{11}t_{21}=t_{12}t_{22}$. By Lemma 3.1 of \cite{dyer-bruhat-graph}, $W'=\langle t_{11},t_{12},t_{21},t_{22}\rangle$ is a reflection subgroup of $W$. We say that this butterfly $x_1,x_2,y_1,y_2$ is \emph{of type} $A_2$ if $W'$ is isomorphic to the Coxeter group of type $A_2$, and same with type $B_2$ and $G_2$. By Theorem 1.4 of \cite{dyer-bruhat-graph}, the subposet (and the directed subgraph) on $x_1W'$ of $W$ is isomorphic to that of a rank $2$ Coxeter group of this type. This means that $W'$ cannot be of type $A_1\times A_1$, because a butterfly cannot be embedded in the Bruhat order of the type $A_1\times A_1$ Coxeter group, which looks like a diamond \begin{tikzpicture}[scale=0.15]
\draw(-1,0)--(0,1)--(1,0)--(0,-1)--(-1,0);
\end{tikzpicture}. In finite classical types, only type $A_2$ and type $B_2$ butterflies exist. 

Moreover, if this butterfly is of type $A_2$, we must have that $x_1W'$ consists of $u<x_1,x_2\lessdot y_1,y_2<z$ with edges from $u$ to $x_1,x_2$ and edges from $y_1,y_2$ to $z$ in the Bruhat graph. Similarly in the case of type $B_2$, we must have that $x_1W'$ consists of $u<a_1,a_2<b_1,b_2<c_1,c_2<z$ with edges in the Bruhat graph $u\rightarrow a_1,a_2\rightarrow b_1,b_2\rightarrow c_1,c_2\rightarrow z$ where $\{x_1,x_2\}=\{a_1,a_2\}$, $\{y_1,y_2\}=\{b_1,b_2\}$ or $\{x_1,x_2\}=\{b_1,b_2\}$, $\{y_1,y_2\}=\{c_1,c_2\}$.

Let $\Phi\subset E$ be a root system for the finite Weyl group $W$, where $E$ is the ambient vector space with an inner product $\langle-,-\rangle$, with a chosen set of positive roots $\Phi^+$ and simple roots $\Delta$. For $\alpha\in\Phi^+$, write $s_{\alpha}\in T$ be the reflection across $\alpha$. Recall that the inversion set is $\Inv_R(w)=\{\alpha\in\Phi^+\:|\: w\alpha\in\Phi^+\}$ so that $T_R(w)=\{s_{\alpha}\:|\: \alpha\in \Inv_R(w)\}$.

We say that a butterfly in a finite Weyl group $W$ \emph{is generated by} $\alpha,\beta\in\Phi^+$ if $s_{\alpha}$ and $s_{\beta}$ generate the subgroup $W'$ and that $\alpha$ and $\beta$ form a set of simple roots in the root subsystem $\Phi$ restricted to the $2$-dimensional vector space spanned by $\alpha$ and $\beta$. Note that the generators $\{\alpha,\beta\}$ of a butterfly is fixed. To analyze butterflies, we start with some a simple lemma on Bruhat covers.

\begin{lemma}\label{lem:weyl-cover}
In a finite Weyl group, $w\lessdot ws_{\alpha}$ if and only if $\alpha\notin\Inv_R(w)$ and there does not exist $\beta_1,\beta_2\in\Inv_R(w)$ such that $\beta_2=-s_{\alpha}\beta_1$. Moreover, if $w\lessdot ws_{\alpha}$ and $\beta\in\Phi^+$ satisfies $s_{\alpha}\beta\in\Phi^-$, then $\beta\in\Inv_R(w)$ if and only if $\beta\in\Inv_R(ws_{\alpha})$.
\end{lemma}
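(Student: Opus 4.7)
The plan is to prove both parts of the lemma by tracking how $\Inv_R(\cdot)$ changes under right multiplication by $s_\alpha$, organized according to the $s_\alpha$-orbits on $\Phi^+$. First I would partition $\Phi^+$ into three classes: the singleton $\{\alpha\}$; Case~2 pairs $\{\beta,s_\alpha\beta\}\subset\Phi^+$ with $\beta\neq\alpha$ (so $s_\alpha\beta\in\Phi^+$); and Case~3 pairs $\{\beta_1,\beta_2\}$ with $\beta_2=-s_\alpha\beta_1\in\Phi^+$ and $s_\alpha\beta_1\in\Phi^-$. For each $\gamma\in\Phi^+$ I would compare the indicators $[w\gamma\in\Phi^-]$ and $[ws_\alpha\gamma\in\Phi^-]$ by writing $ws_\alpha\gamma=\pm w\delta$ where $\delta=\pm s_\alpha\gamma$ is chosen to lie in $\Phi^+$. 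A direct case check then shows that each Case~2 pair contributes zero to $\ell(ws_\alpha)-\ell(w)$, because the involution $\gamma\leftrightarrow s_\alpha\gamma$ transposes the two contributions; each Case~3 pair contributes $+2$, $0$, or $-2$ according to whether the pair has $0$, $1$, or $2$ elements in $\Inv_R(w)$; and the singleton contributes $\pm 1$ according to the sign of $w\alpha$. Writing $n_j$ for the number of Case~3 pairs with $j$ members in $\Inv_R(w)$, then assuming $\alpha\notin\Inv_R(w)$ one obtains
\[\ell(ws_\alpha)-\ell(w)=1+2(n_0-n_2).\]

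The essential Weyl-group input is the root-sum identity $\beta_1+\beta_2=\langle\beta_1,\alpha^\vee\rangle\,\alpha$ for any Case~3 pair, which is immediate from $s_\alpha\beta_1=\beta_1-\langle\beta_1,\alpha^\vee\rangle\alpha$ and expresses $\beta_1+\beta_2$ as a positive integer multiple of $\alpha$. When $\alpha\notin\Inv_R(w)$, this forces $w\beta_1+w\beta_2$ to be a positive multiple of $w\alpha\in\Phi^+$, so at least one of $w\beta_1, w\beta_2$ is positive and hence $n_2=0$ holds automatically. Consequently $w\lessdot ws_\alpha$ is equivalent to the pair of conditions ``$\alpha\notin\Inv_R(w)$'' and ``$n_0=0$''; interpreted through the excerpt's convention for $\Inv_R$, the second condition is precisely the nonexistence of $\beta_1,\beta_2\in\Inv_R(w)$ with $\beta_2=-s_\alpha\beta_1$, giving the first part of the lemma.

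For the ``moreover'' clause, assume $w\lessdot ws_\alpha$, so $n_0=n_2=0$ and every Case~3 pair has exactly one element in $\Inv_R(w)$. The Case~3 bookkeeping also records that $\beta\in\Inv_R(ws_\alpha)\iff -s_\alpha\beta\notin\Inv_R(w)$, and combining this with the ``exactly one in each pair'' property yields $\beta\in\Inv_R(w)\iff\beta\in\Inv_R(ws_\alpha)$ for every $\beta\in\Phi^+$ with $s_\alpha\beta\in\Phi^-$ lying in a Case~3 orbit; the singleton case $\beta=\alpha$ behaves oppositely and so is implicitly excluded. The main technical hurdle is the careful sign-bookkeeping required to justify the length formula and the Case~3 equivalence; once this is set up, the Weyl-group-specific content is concentrated entirely in the root-sum identity that eliminates $n_2>0$, after which both parts of the lemma fall out with essentially no further calculation.
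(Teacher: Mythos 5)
Your proposal is correct and follows essentially the same argument as the paper's proof: both partition $\Phi^+$ according to the $s_\alpha$-action, track the contribution of each orbit to $\ell(ws_\alpha)-\ell(w)$, and use the positivity of the coefficient in $\beta_1+\beta_2=c\alpha$ (the paper writes $s_\alpha\beta=\beta-c\alpha$ with $c>0$, you write $\beta_1+\beta_2=\langle\beta_1,\alpha^\vee\rangle\alpha$) to show that once $w\alpha\in\Phi^+$ the term $n_2$ vanishes, leaving $\ell(ws_\alpha)-\ell(w)=1+2n_0$. The two points where you add genuine value are your explicit formula $\ell(ws_\alpha)-\ell(w)=1+2(n_0-n_2)$, and your observation that the ``moreover'' clause fails for the singleton $\beta=\alpha$ (indeed $\alpha\notin\Inv_R(w)$ but $\alpha\in\Inv_R(ws_\alpha)$), an edge case the paper's wording silently elides; you also correctly notice the sign-convention tension — with the standard $\Inv_R(w)=\{\gamma\in\Phi^+:w\gamma\in\Phi^-\}$, which is what the proof actually uses, the condition in the lemma should read $\beta_1,\beta_2\notin\Inv_R(w)$, and only the dual version stated immediately after the proof has the membership the right way around.
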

\begin{proof}
Consider the following partition of $\Phi^+$ with respect to $\alpha\in\Phi^+$:
\begin{enumerate}
\item the root $\alpha$ itself;
\item roots $\gamma\in\Phi^+$ such that $s_{\alpha}\gamma=\gamma$;
\item roots $\gamma\in\Phi^+$ such that $s_{\alpha}\gamma\in\Phi^+$ but $s_{\alpha}\gamma\neq\gamma$;
\item roots $\beta\in\Phi^+$ such that $s_{\alpha}\beta\in\Phi^-$.
\end{enumerate}
We pair up roots in (3) by $(\gamma,s_{\alpha}\gamma)$ and pair up roots in (4) by $(\beta,-s_{\alpha}\beta)$.

Assume $\alpha\notin\Inv_R(w)$ and compare $\Inv_R(w)$ with $\Inv_R(ws_{\alpha})$. First, $\alpha\notin\Inv_R(w)$ and $\alpha\in\Inv_R(ws_{\alpha})$, and for each root $\gamma$ in (2), we have $\gamma\in\Inv_R(w)\Leftrightarrow\gamma\in\Inv_R(ws_{\alpha})$. Similarly, for $\gamma$ in (3), we have $\gamma\in\Inv_R(w)\Leftrightarrow s_{\alpha}\gamma\in\Inv_R(ws_{\alpha})$. Thus, roots in (1), (2) and (3) each contribute $1$ to the quantity $|\Inv_R(ws_{\alpha})|-|\Inv_R(w)|$. We now examine (4).

Let $\beta$ be a root in (4) and also write $\beta_1=\beta$ and $\beta_2=-s_{\alpha}\beta$. We have $s_{\alpha}\beta=\beta-c\alpha$, where $c=2\langle\alpha,\beta\rangle/\langle\alpha,\alpha\rangle\in\mathbb{Q}_{>0}$. As $\alpha\notin\Inv_R(w)$, we know $w\alpha\in\Phi^+$. So $w(\beta_1+\beta_2)=cw\alpha>0$, meaning that at most one of $\beta_1,\beta_2$ belong to $\Inv_R(w)$. Moreover, $\beta_1\notin\Inv_R(w)$ if and only if $\beta_2\in\Inv_R(ws_{\alpha})$. This means that if none of $\beta_1,\beta_2$ belong to $\Inv_R(w)$, then both belong to $\Inv_R(ws_{\alpha})$, contributing $2$ to $|\Inv_R(ws_{\alpha})|-|\Inv_R(w)|$; and if one of them belongs to $\Inv_R(w)$, then the same one belongs to $\Inv_R(ws_{\alpha})$.

Note that $w\lessdot ws_{\alpha}$ is equivalent to $|\Inv_R(ws_{\alpha})|-|\Inv_R(w)|=1$. Considering the above contributions from each category of roots, we obtain the desired result. 
\end{proof}
Note that Lemma~\ref{lem:weyl-cover} is also equivalent to saying that $ws_{\alpha}\lessdot w$ if and only if $\alpha\in\Inv_R(w)$ and there does not exist $\beta_1,\beta_2\in\Inv_R(w)$ such that $\beta_2=-s_{\alpha}\beta_1$. In simply-laced types, assume $\langle\alpha,\alpha\rangle=2$ for all roots $\alpha\in\Phi$, then all inner products between different positive roots take on values in $\{0,1,-1\}$, and the condition $\beta_2=-s_{\alpha}\beta_1$ is equivalent to $\beta_1+\beta_2=\alpha$.

\begin{lemma}\label{lem:simply-laced-join}
Let $W$ be a finite Weyl group of simply-laced types, and let $x_1,x_2\lessdot y_1,y_2$ form a butterfly. Then there exists $u\lessdot x_1,x_2$ and $z\gtrdot y_1,y_2$ in $W$.
\end{lemma}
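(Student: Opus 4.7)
The plan is to exploit the dihedral reflection subgroup $W'=\langle t_{11},t_{12},t_{21},t_{22}\rangle$ introduced just before the lemma, and to read off $u$ and $z$ from the Bruhat structure on the coset $x_1W'$. Most of the setup is already present in the discussion preceding the lemma.

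First, I would pin down the type of $W'$. In a simply-laced finite Weyl group, every rank-two crystallographic root subsystem is of type $A_1\times A_1$ or $A_2$. The preamble to the lemma already rules out $A_1\times A_1$, since the Bruhat order of that Coxeter group is a four-element diamond, which cannot contain a butterfly. So $W'$ must be of type $A_2$, i.e.\ $W'\cong\mathfrak{S}_3$.

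Next I would apply Theorem~1.4 of \cite{dyer-bruhat-graph}: the Bruhat subposet (and the Bruhat graph) of $W$ induced on the coset $x_1W'$ agrees with the Bruhat poset (and Bruhat graph) of $W'$. In type $A_2$ this is a hexagon with a unique minimum $u$, two lower-middle elements, two upper-middle elements, and a unique maximum $z$. As noted in the preamble, the butterfly must sit in this hexagon as the four middle vertices, with $\{x_1,x_2\}$ the lower-middle pair and $\{y_1,y_2\}$ the upper-middle pair. Hence elements $u,z\in W$ with $u<x_1,x_2$ and $y_1,y_2<z$ exist, and each of the pairs $(u,x_i)$ and $(y_j,z)$ is joined by a reflection of $W'\subset W$.

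Finally I would promote these order relations to Bruhat covers. The length-shift content of Dyer's theorem says that $\ell_W$ restricted to $x_1W'$ equals $\ell_{W'}$ plus an additive constant, so from the $A_2$ hexagon we read off $\ell_W(x_i)-\ell_W(u)=1$ and $\ell_W(z)-\ell_W(y_j)=1$. Combined with $u<x_i$, $y_j<z$, and each such pair differing by a reflection, these length-one order relations are Bruhat covers in $W$, giving $u\lessdot x_1,x_2$ and $y_1,y_2\lessdot z$ as required. No substantive obstacle arises: the whole proof is a direct application of Dyer's structural results together with the classification of rank-two simply-laced root subsystems, and in particular the essentially unique ($A_2$) shape of the ambient rank-two interval does all the work.
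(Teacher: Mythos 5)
The argument breaks at the step you call the ``length-shift content of Dyer's theorem.'' Theorem~1.4 of \cite{dyer-bruhat-graph} gives an isomorphism of the \emph{Bruhat graph and Bruhat order} on the coset $x_1W'$ with those of $W'$, but it does \emph{not} assert that $\ell_W$ restricted to $x_1W'$ equals $\ell_{W'}$ plus a constant, and this stronger statement is simply false in general. Relations that are covers in the induced subposet (i.e.\ rank-$1$ steps of the abstract $A_2$ hexagon) need not be covers in $W$: nothing prevents an element of $W \setminus x_1W'$ from lying strictly between $u$ and $x_1$, and nothing forces $\ell(x_1)-\ell(u)=1$. Indeed, the paper's remark at the end of Section~5 records a butterfly in type $F_4$ whose abstract rank-two octagon has lower covers realized in $W$ but whose upper covers fail to be covers in $W$ --- a direct counterexample to the poset isomorphism preserving lengths or covers. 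So the existence of $u$ with $u<x_1,x_2$ (and $z$ with $y_1,y_2<z$) is immediate from Dyer, but showing these are \emph{covers} is exactly the substance of the lemma, and your proposal skips it.

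The paper closes this gap using the simply-laced hypothesis in an essential way. It first establishes Lemma~\ref{lem:weyl-cover}, which characterizes Bruhat covers $w\lessdot ws_\alpha$ in a Weyl group in terms of inversion sets (in simply-laced type, $w\lessdot ws_\alpha$ iff $\alpha\notin\Inv_R(w)$ and no $\beta_1,\beta_2\in\Inv_R(w)$ have $\beta_1+\beta_2=\alpha$). Then, assuming $u$ is not covered by $x_2$, it produces roots $\gamma_1,\gamma_2\in\Inv_R(x_2)$ summing to $\beta$ and uses the simply-laced inner-product dichotomy $\langle\gamma_i,\alpha+\beta\rangle\in\{0,\pm1\}$ to locate $\gamma_1,\gamma_2$ in $\Inv_R(y_1)$, contradicting the given cover $x_1\lessdot y_1$ via Lemma~\ref{lem:weyl-cover}. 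You would need an argument of this kind --- not just the coset poset isomorphism --- to conclude.
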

\begin{proof}
Since $W$ is simply-laced, this butterfly can only be of type $A_2$. Let $u<x_1,x_2\lessdot y_1,y_2\lessdot z$ be this type $A_2$ subposet. We will show that $u$ is covered by $x_1$ and $x_2$. By taking the dual statement, we will have $y_1,y_2\lessdot z$ as well.

Let this butterfly be generated by $\alpha,\beta\in\Phi^+$ and $us_{\alpha}=x_1$, $us_{\beta}=x_2$, $us_{\alpha}s_{\beta}=y_2$, $us_{\beta}s_{\alpha}=y_1$. We have $\langle\alpha,\beta\rangle=-1$, $\alpha+\beta=s_{\alpha}\beta=s_{\beta}\alpha\in\Phi^+$, and we also know that in this $A_2$, $\alpha\in\Inv_R(x_1),\Inv_R(y_2),\Inv_R(z)$, $\beta\in\Inv_R(x_2),\Inv_R(y_1),\Inv_R(z)$, $\alpha+\beta\in\Inv_R(y_1),\Inv_R(y_2),\Inv_R(z)$. If $x_2$ does not cover $u$ (or equivalently, $x_1$ does not cover $u$), by Lemma~\ref{lem:weyl-cover}, there exists $\gamma_1,\gamma_2\in\Inv_R(x_2)$ such that $\gamma_1+\gamma_2=\beta$, or equivalently, $\gamma_2=-s_{\beta}\gamma$. We have
\[\langle\gamma_1,\alpha+\beta\rangle+\langle\gamma_1,\alpha+\beta\rangle=\langle\beta,\alpha+\beta\rangle=1.\]
Since all inner products between different positive roots lie in $\{0,1,-1\}$, we can without loss of generality assume that $\langle\gamma_1,\alpha+\beta\rangle=0$ and $\langle\gamma_2,\alpha+\beta\rangle=1$. Now $s_{\alpha+\beta}\gamma_1=\gamma_1$. Since $\gamma_1\in\Inv_R(x_2)$ and $y_1=s_{\alpha+\beta}x_2$, we have $\gamma_1\in\Inv_R(y_1)$. Moreover, since $\langle\gamma_2,\alpha+\beta\rangle=1$, $s_{\alpha+\beta}\gamma_2=\gamma_2-(\alpha+\beta)=-(\alpha+\gamma_1)\in\Phi^-$. By Lemma~\ref{lem:weyl-cover}, as $x_2\lessdot y_1$, $\gamma_2\in\Inv_R(y_1)$. But $\gamma_1,\gamma_2\in\Inv_R(y_1)$ with $s_{\beta}\gamma_2=-\gamma_1$, contradicting $y_1\gtrdot y_1s_{\beta}=x_1$.
\end{proof}

\subsection{Butterflies in the symmetric group}
For $w\in \mathfrak{S}_n$, define its \emph{rank-matrix} to be $w[i,j]=|\{a\in[i]\:|\: w(a)\geq j\}|$, which can be viewed as the number of dots weakly in the bottom left corner in the permutation matrix of $w$, for all $i,j\in[n]$. See Figure~\ref{fig:rank-matrix-type-A}.
\begin{figure}[h!]
\centering
\begin{tikzpicture}[scale=0.4]
\draw(0,0)--(5,0)--(5,-5)--(0,-5)--(0,0);
\draw[dashed](0,-1)--(5,-1);
\draw[dashed](0,-2)--(5,-2);
\draw[dashed](0,-3)--(5,-3);
\draw[dashed](0,-4)--(5,-4);
\draw[dashed](1,0)--(1,-5);
\draw[dashed](2,0)--(2,-5);
\draw[dashed](3,0)--(3,-5);
\draw[dashed](4,0)--(4,-5);
\node at (0.5,0.5) {$1$};
\node at (1.5,0.5) {$2$};
\node at (2.5,0.5) {$3$};
\node at (3.5,0.5) {$4$};
\node at (4.5,0.5) {$5$};
\node at (-0.5,-0.5) {$1$};
\node at (-0.5,-1.5) {$2$};
\node at (-0.5,-2.5) {$3$};
\node at (-0.5,-3.5) {$4$};
\node at (-0.5,-4.5) {$5$};
\node at (0.5,-2.5) {$\bullet$};
\node at (1.5,-4.5) {$\bullet$};
\node at (2.5,-0.5) {$\bullet$};
\node at (3.5,-3.5) {$\bullet$};
\node at (4.5,-1.5) {$\bullet$};
\end{tikzpicture}
\qquad
\begin{tikzpicture}[scale=0.4]
\draw(0,0)--(5,0)--(5,-5)--(0,-5)--(0,0);
\draw[dashed](0,-1)--(5,-1);
\draw[dashed](0,-2)--(5,-2);
\draw[dashed](0,-3)--(5,-3);
\draw[dashed](0,-4)--(5,-4);
\draw[dashed](1,0)--(1,-5);
\draw[dashed](2,0)--(2,-5);
\draw[dashed](3,0)--(3,-5);
\draw[dashed](4,0)--(4,-5);
\node at (0.5,0.5) {$1$};
\node at (1.5,0.5) {$2$};
\node at (2.5,0.5) {$3$};
\node at (3.5,0.5) {$4$};
\node at (4.5,0.5) {$5$};
\node at (-0.5,-0.5) {$1$};
\node at (-0.5,-1.5) {$2$};
\node at (-0.5,-2.5) {$3$};
\node at (-0.5,-3.5) {$4$};
\node at (-0.5,-4.5) {$5$};

\node at (0.5,-0.5) {$1$};
\node at (0.5,-1.5) {$1$};
\node at (0.5,-2.5) {$1$};
\node at (0.5,-3.5) {$0$};
\node at (0.5,-4.5) {$0$};
\node at (1.5,-0.5) {$2$};
\node at (1.5,-1.5) {$2$};
\node at (1.5,-2.5) {$2$};
\node at (1.5,-3.5) {$1$};
\node at (1.5,-4.5) {$1$};
\node at (2.5,-0.5) {$3$};
\node at (2.5,-1.5) {$2$};
\node at (2.5,-2.5) {$2$};
\node at (2.5,-3.5) {$1$};
\node at (2.5,-4.5) {$1$};
\node at (3.5,-0.5) {$4$};
\node at (3.5,-1.5) {$3$};
\node at (3.5,-2.5) {$3$};
\node at (3.5,-3.5) {$2$};
\node at (3.5,-4.5) {$1$};
\node at (4.5,-0.5) {$5$};
\node at (4.5,-1.5) {$4$};
\node at (4.5,-2.5) {$3$};
\node at (4.5,-3.5) {$2$};
\node at (4.5,-4.5) {$1$};
\end{tikzpicture}
\caption{The rank-matrix (right) for the permutation $w=35142$ (left).}
\label{fig:rank-matrix-type-A}
\end{figure}
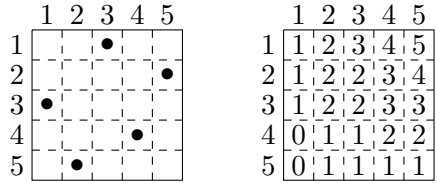

The following lemma is immediate from observation.
\begin{lemma}\label{lem:rank-change-outside-box}
Let $w\in \mathfrak{S}_n$ and $w'=w\cdot (i,j)$. Then $w[a,b]=w'[a,b]$ for the coordinate $(a,b)$ outside, or on the top or right boundary, of the rectangle formed by $(i,w(i))$ and $(j,w(j))$.
\end{lemma}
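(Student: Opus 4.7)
The plan is to compute $w[a,b] - w'[a,b]$ directly and observe that only four dot positions can produce a non-trivial contribution. Writing (without loss of generality) $i < j$, right multiplication by the transposition $(i,j)$ swaps the values $w(i)$ and $w(j)$ while leaving all other values fixed, so the permutation matrix of $w'$ is obtained from that of $w$ by replacing the dots $(i, w(i))$ and $(j, w(j))$ with dots at $(i, w(j))$ and $(j, w(i))$. These four positions are exactly the corners of the rectangle spanned by $(i, w(i))$ and $(j, w(j))$, i.e.\ the cells in $[i,j] \times [r, R]$ where $r = \min(w(i), w(j))$ and $R = \max(w(i), w(j))$.

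All other dots contribute equally to $w[a,b]$ and to $w'[a,b]$, so the difference reduces to the four indicators coming from columns $i$ and $j$:
\[
w[a,b] - w'[a,b] = \mathbb{1}[i \le a,\, w(i) \ge b] + \mathbb{1}[j \le a,\, w(j) \ge b] - \mathbb{1}[i \le a,\, w(j) \ge b] - \mathbb{1}[j \le a,\, w(i) \ge b].
\]
It then suffices to show that this expression vanishes whenever $(a,b)$ lies outside the rectangle or on its top or right edge.

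This is a short case split on the position of $(a,b)$: (i) if $a < i$, all four indicators are zero; (ii) if $a \ge j$ (which includes the right boundary $a = j$), both column conditions $i \le a$ and $j \le a$ hold, and the two positive terms cancel against the two negative ones; (iii) if $b \le r$ (including the top boundary $b = r$, recalling that row indices increase downward in Figure~\ref{fig:rank-matrix-type-A}), all four ``$w(\cdot) \ge b$'' indicators equal $1$ and the terms cancel in pairs; (iv) if $b > R$, all four of those indicators are zero. These four regimes together exhaust the complement of the strict interior and of the bottom and left edges of the rectangle, which is exactly the region named in the lemma. There is no real obstacle here; the only care required is to align ``top'' and ``right'' with the axis orientation fixed in Figure~\ref{fig:rank-matrix-type-A}, which is why the bottom and left edges (where a single indicator switches) are excluded from the conclusion.
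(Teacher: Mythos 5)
Your proof is correct. The paper itself gives no argument — it simply declares Lemma~\ref{lem:rank-change-outside-box} ``immediate from observation'' — so your proposal fills in the intended observation precisely: only the two dots in columns $i$ and $j$ move, $w[a,b]-w'[a,b]$ reduces to a four-term indicator sum, and the four regimes $a<i$, $a\geq j$, $b\leq r$, $b>R$ each force cancellation while their union is exactly the complement of (interior) $\cup$ (left edge) $\cup$ (bottom edge). Your remark about aligning ``top'' and ``right'' with the matrix convention of Figure~\ref{fig:rank-matrix-type-A} is exactly the point one has to be careful about; the argument is sound.
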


The following characterization of the strong order in $\mathfrak{S}_n$ is well-known.
\begin{lemma}[Theorem 2.1.5 of \cite{bjorner-brenti}]\label{lem:strong-order-type-A}
For $w,v\in \mathfrak{S}_n$, $w\leq v$ if and only if $w[i,j]\leq v[i,j]$ for all $i,j\in[n]$.
\end{lemma}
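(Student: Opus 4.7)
My plan is to prove the two implications separately. For $(\Rightarrow)$, by transitivity of Bruhat order it suffices to check that a single Bruhat cover $w \lessdot w'$ preserves the rank-matrix inequality. In $\mathfrak{S}_n$ every such cover has the form $w' = w \cdot t_{ab}$ with $a<b$, $w(a)<w(b)$, and no $m \in (a,b)$ satisfying $w(a)<w(m)<w(b)$. Lemma~\ref{lem:rank-change-outside-box} says $w$ and $w'$ agree on rank-matrix entries outside the rectangle with corners $(a, w(a))$ and $(b, w(b))$; inside the rectangle only position $a$ contributes (since $b > c$ for every row index $c$ in the rectangle), so $w'[c,d] - w[c,d] = \mathbf{1}[w(b) \geq d] - \mathbf{1}[w(a) \geq d] \geq 0$.

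For $(\Leftarrow)$ I reduce to the following claim: if $w \neq v$ and $w[i,j] \leq v[i,j]$ for all $(i,j)$, then there exists a Bruhat cover $w \lessdot w'$ with $w'[i,j] \leq v[i,j]$ for all $(i,j)$. Iterating this produces a strictly increasing chain $w = w_0 \lessdot w_1 \lessdot w_2 \lessdot \cdots$, all bounded by $v$ in the rank-matrix order; by finiteness of $\mathfrak{S}_n$ the chain terminates, and the terminal element must equal $v$ (else the claim would apply again), realizing $w \leq v$ in Bruhat order.

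To prove the claim, let $i$ be the smallest index with $w(i) \neq v(i)$. Since $w[i-1,\cdot] = v[i-1,\cdot]$, a column-by-column comparison at row $i$ forces $w(i) < v(i)$. Pick $k > i$ minimal subject to $w(i) < w(k) \leq v(i)$; such $k$ exists because $v(i)$ itself must appear in $w$ at some position greater than $i$. Minimality of $k$ forbids any $m \in (i,k)$ with $w(i) < w(m) < w(k)$, so $w' \coloneqq w \cdot t_{ik}$ is a Bruhat cover.

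The main obstacle is verifying $w'[c,d] \leq v[c,d]$ in the rectangle $i \leq c < k$, $w(i) < d \leq w(k)$, where the forward analysis gives $w'[c,d] = w[c,d] + 1$; the nonstrict hypothesis $v[c,d] \geq w[c,d]$ must be sharpened to strict inequality. Position $m = i$ already contributes $+1$ to $v[c,d] - w[c,d]$ (since $v(i) \geq w(k) \geq d > w(i)$), so it remains to check that the net contribution from $m \in (i, c]$ is nonnegative. Minimality of $k$ restricts $w(m)$ to $(-\infty, w(i)] \cup (v(i), \infty)$ for such $m$, so $w(m) \geq d$ is equivalent to $w(m) > v(i)$ on this range. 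Combining this observation with the rank-matrix hypothesis applied at the coordinate $(c, v(i)+1)$ and the implication $v(m) > v(i) \Rightarrow v(m) \geq d$ yields $|\{m \in (i,c] : v(m) \geq d\}| \geq |\{m \in (i,c] : w(m) \geq d\}|$, which completes the verification.
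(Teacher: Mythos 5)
The paper does not prove this lemma; it is cited directly as Theorem 2.1.5 of Björner--Brenti, so there is no in-text argument to compare against. Your proof, however, is correct and self-contained. The forward implication correctly reduces to Bruhat covers and uses the same local rank-change observation recorded in Lemma~\ref{lem:rank-change-outside-box}. The backward implication proceeds by constructing a cover $w \lessdot w\cdot t_{ik}$ that stays below $v$ in the rank-matrix order and iterating; since each step strictly increases length and $\mathfrak{S}_n$ is finite, the chain reaches $v$. The only delicate point is sharpening the weak inequality $w[c,d]\leq v[c,d]$ to strict inequality on the affected rectangle $i\leq c<k$, $w(i)<d\leq w(k)$, and your decomposition handles it correctly: position $m=i$ contributes $+1$ to $v[c,d]-w[c,d]$ because $w(i)<d\leq w(k)\leq v(i)$; and for $m\in(i,c]$, minimality of $k$ forces $w(m)\leq w(i)$ or $w(m)>v(i)$, hence $\{m\in(i,c]:w(m)\geq d\}=\{m\in(i,c]:w(m)>v(i)\}$, whose size is controlled by the hypothesis at $(c,\,v(i)+1)$; combined with $v(m)>v(i)\Rightarrow v(m)\geq d$, this gives a nonnegative net contribution from $(i,c]$. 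This is essentially the standard argument for the rank/tableau criterion found in Björner--Brenti, so nothing is lost by including it, but nothing new is gained relative to the cited source either.
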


We are now ready to prove Lemma~\ref{lem:butterfly} for the symmetric group.

\begin{proof}[Proof of Lemma~\ref{lem:butterfly} in the case of type $A_{n-1}$]
Let $x_1,x_2\lessdot y_1,y_2$ be a butterfly in $[u,v]$. By Lemma~\ref{lem:simply-laced-join}, there exists $z\gtrdot y_1,y_2$ so it suffices to show that $z\leq v$. Suppose that $y_1=z\cdot (i,j)$, $y_2=z\cdot (j,k)$ with $i<j<k$ and $z(i)>z(j)>z(k)$. There is no dot strictly inside the rectangle formed by $(i,z(i))$ and $(j,z(k))$ in the permutation matrix of $z$, because otherwise $y_2$ would have inversions at some $e_p-e_i$ and $e_j-e_p$, contradicting $y_2\gtrdot y_2\cdot (i,j)$ by Lemma~\ref{lem:weyl-cover}. Likewise, we see that there are only one dot, $(j,z(j))$, in the interior rectangle formed by $(i,z(i))$ and $(k,z(k))$ in the permutation matrix of $z$.

To show that $z\leq v$, by Lemma~\ref{lem:strong-order-type-A}, it suffices to show that $z[a,b]\leq v[a,b]$ for all $a,b\in[n]$. By Lemma~\ref{lem:rank-change-outside-box}, $z[a,b]=y_1[a,b]$ if $(a,b)$ is not in the interior or on the left or bottom boundary on the rectangle formed by $(i,z(i))$ and $(j,z(j))$; $z[a,b]=y_2[a,b]$ if $(a,b)$ is not in the interior or the bottom left boundary of the rectangle $(j,z(j))$ and $(k,z(k))$. Noticing that these regions are disjoint, we have $z[a,b]=y_1[a,b]$ or $y_2[a,b]$. But $v\geq y_1,y_2$ so $v[a,b]\geq \max{y_1[a,b],y_2[a,b]}\geq z[a,b]$, which gives $z\leq v$ as desired.
\end{proof}

\subsection{Butterflies in right-angled Coxeter groups}
Throughout this section, let $W$ be a right-angled Coxeter group and let $S$ be its generating set. By definition, any two elements $s_i,s_j\in S$ either commute or have no relations. Recall a well-known result of Tits \cite{Tits-words} that says all reduced expressions of $w$ are connected by moves of the form $s_is_j \cdots = s_js_i \cdots$ with $m_{ij}$ factors on each side, and in this case, only $m_{ij}=2$ needs to be considered. In other words, any two reduced expressions of $w\in W$ are connected by commutation moves. 

\begin{lemma}\label{lem:descent-if-can-move}
Let $w=s_{i_1}\cdots s_{i_{\ell}}$ be any reduced word of $w$. Then $s\in D_L(w)$ if and only if the minimal $j$ such that $s_{i_j}=s$ commutes with $s_{i_1},\ldots,s_{i_{j-1}}$.
\end{lemma}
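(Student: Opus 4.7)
The plan is to use Tits's theorem for right-angled Coxeter groups recalled immediately before the lemma, which states that any two reduced expressions for $w \in W$ are connected by a sequence of commutation moves swapping two adjacent commuting generators.

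For the \emph{if} direction, suppose $j$ is the smallest index with $s_{i_j}=s$ and that $s$ commutes with each of $s_{i_1},\ldots,s_{i_{j-1}}$. Then $s_{i_j}$ may be commuted past each of $s_{i_{j-1}},s_{i_{j-2}},\ldots,s_{i_1}$ in turn to produce a new reduced expression
\[
w = s \cdot s_{i_1} \cdots \widehat{s_{i_j}} \cdots s_{i_\ell},
\]
so $\ell(sw)<\ell(w)$ and hence $s \in D_L(w)$.

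For the \emph{only if} direction, suppose $s \in D_L(w)$ and fix some reduced expression $w = s \cdot s_{k_1} \cdots s_{k_{\ell-1}}$ beginning with $s$. By Tits's theorem this expression is connected to $s_{i_1}\cdots s_{i_\ell}$ by a finite sequence of commutation moves. The key step is to track the initial copy of $s$ throughout this sequence: at each commutation move the tracked generator either stays fixed or is transposed with an adjacent generator, which by the definition of a commutation move must commute with $s$. Let $k$ be the final position of the tracked $s$ in $s_{i_1}\cdots s_{i_\ell}$, so $s_{i_k}=s$. Every generator $s_{i_a}$ with $a<k$ must have been swapped past the tracked $s$ at some point in the sequence (possibly several times), and each such swap forces $s_{i_a}$ to commute with $s$. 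Since $j$ is the smallest index with $s_{i_j}=s$, we have $j\leq k$, and therefore $s_{i_1},\ldots,s_{i_{j-1}}$ all commute with $s$, as required.

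The only mild subtlety is formalizing the notion of ``tracking'' a specific copy of $s$; one can do this cleanly by attaching distinct labels to each of the $\ell$ positions of the starting word and updating them under each commutation move as a transposition of the adjacent labels. I do not expect a genuine obstacle beyond the appeal to Tits's theorem, which is already available in the right-angled setting.
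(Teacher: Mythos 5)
Your proof is correct and follows essentially the same route as the paper: the forward direction by moving $s$ to the front via commutations, and the converse by tracking the initial $s$ in a reduced word starting with $s$ through the sequence of commutation moves supplied by Tits's theorem. The paper phrases the tracking step slightly differently (noting that the tracked $s$ remains the leftmost occurrence of $s$ throughout), but your version — that any letter ending up to the left of the tracked $s$ must at some point have been swapped past it and hence commutes with $s$ — is an equivalent and equally complete way to close the argument.
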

\begin{proof}
First, if $s$ commutes with $s_{i_1},\ldots,s_{i_{j-1}}$, then we can move it all the way to the left via commutation moves to obtain a reduced word of $w$ starting with $s$, which means $s\in D_L(w)$. On the other hand, if $s\in D_L(w)$, then we use another reduced word of $w$ that starts with $s$. Keeping track of this $s$ and applying commutation moves, we see that only $s_i$'s commuting with $s$ can ever appear on the left of this $s$, which always stays as the first appearance of $s$ in any reduced word. We are done because any two reduced words of $w$ are connected via commutation moves.
\end{proof}

\begin{lemma}
If $y$ covers two elements $x_1,x_2$ and $s\in D_L(x_1), D_L(x_2)$, then $s\in D_L(y)$.
\end{lemma}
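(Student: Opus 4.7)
My plan is to argue by contradiction, leveraging the descent characterization in Lemma~\ref{lem:descent-if-can-move}: a simple reflection $s$ lies in $D_L(w)$ if and only if, in some (equivalently, every) reduced expression for $w$, the first occurrence of $s$ commutes with all letters preceding it. Suppose for contradiction that $s \notin D_L(y)$, and fix any reduced word $y = s_{i_1}\cdots s_{i_\ell}$. Since $s \in \supp(x_1) \subseteq \supp(y)$, the letter $s$ does occur in this word; let $p$ be its first position. Then the contrapositive of Lemma~\ref{lem:descent-if-can-move} applied to $y$ supplies an index $m < p$ such that $s_{i_m}$ fails to commute with $s$ (and hence in particular $s_{i_m} \neq s$).

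Next, I would apply the Exchange Property (Proposition~\ref{prop:exchange-property}) to the two covers $x_1, x_2 \lessdot y$ to obtain distinct positions $j \neq k$ (distinct because $x_1 \neq x_2$) for which $x_1 = s_{i_1}\cdots \widehat{s_{i_j}}\cdots s_{i_\ell}$ and $x_2 = s_{i_1}\cdots\widehat{s_{i_k}}\cdots s_{i_\ell}$ are both reduced expressions. Because $j \neq k$, at least one of $j, k$ differs from $m$; relabeling $x_1$ and $x_2$ if necessary, I assume $j \neq m$.

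The final step, and the main technical point, is to verify that in this reduced expression for $x_1$ the letter $s_{i_m}$ still precedes the first occurrence of $s$; Lemma~\ref{lem:descent-if-can-move} applied to $s \in D_L(x_1)$ will then force $s$ to commute with $s_{i_m}$, a contradiction. A short case analysis on whether $s_{i_j} = s$ and on the position of $j$ relative to $p$ handles this: if $j \neq p$ then the first $s$ in $x_1$ sits at (essentially) position $p$ with predecessors $\{s_{i_1},\ldots,s_{i_{p-1}}\} \setminus \{s_{i_j}\}$, which contains $s_{i_m}$ since $m \neq j$. The delicate case is $j = p$, where deleting $s_{i_p}$ shifts the first $s$ in $x_1$ to some later position $q > p$; but then the predecessors of this new first $s$ include \emph{all} of $\{s_{i_1},\ldots,s_{i_{p-1}}\}$, so again contain $s_{i_m}$ since $m < p = j$. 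I expect this case-check to be the main (though routine) obstacle; note also that the hypothesis $s \in D_L(x_2)$ is used only implicitly, via the fact that $x_1 \neq x_2$ gives $j \neq k$ and thereby a position different from $m$.
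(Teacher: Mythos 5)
Your proof is correct, and the underlying ideas (the descent characterization of Lemma~\ref{lem:descent-if-can-move}, the Exchange Property to place $x_1,x_2$ as deletions from a single reduced word of $y$, and a case analysis comparing the deletion position with the first occurrence of $s$) are the same as the paper's. The packaging is slightly different and arguably cleaner: you argue by contradiction, isolating a single ``bad'' non-commuting letter $s_{i_m}$ with $m<p$ and then using $j\neq k$ to relabel so the deleted position for $x_1$ avoids $m$; the paper instead proves $s\in D_L(y)$ directly by splitting on whether the first $s$ in $y$ falls before or after the second deletion position $b$, and in the latter case uses $x_1$ and $x_2$ in tandem to show $s$ commutes with every letter preceding it.

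One remark in your write-up is misleading and worth flagging. You say the hypothesis $s\in D_L(x_2)$ ``is used only implicitly, via the fact that $x_1\neq x_2$ gives $j\neq k$.'' That understates its role: your relabeling step may swap $x_1$ and $x_2$, and the rest of the argument then invokes $s\in D_L$ of whichever element ends up called $x_1$. So you genuinely need $s$ to be a left descent of \emph{both} covers, not just that the covers are distinct. Indeed, distinctness alone is not enough: in the infinite dihedral group $\langle s,t\rangle$ (which is right-angled), take $y=tst$ covering $x_1=st$ and $x_2=ts$; then $s\in D_L(x_1)$ and $x_1\neq x_2$, yet $s\notin D_L(y)$. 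So the hypothesis on $x_2$ is doing real work, precisely through the relabeling you perform.
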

\begin{proof}
Let $y=s_{i_1}\cdots s_{i_{\ell}}$ be a reduced expression of $y$. By the Subword Property, assume that $x_1=s_{i_1}\cdots \hat{s}_{i_a}\cdots s_{i_{\ell}}$, $x_2=s_{i_1}\cdots \hat{s}_{i_b}\cdots s_{i_{\ell}}$ with $a<b$. Let $s_{i_j}=s$ be the first appearance of $s$ in this reduced of $y$. The existence of $j$ follows from $s\in D_L(x_1),D_L(x_2)$.

Case 1.: $a\leq j<b$. The prefixes of length $j$ in $y$ and $x_2$ are the same. By Lemma~\ref{lem:descent-if-can-move}, since $s\in D_L(x_2)$, $s$ commutes with $s_{i_1},\ldots,s_{i_{j-1}}$. And by Lemma~\ref{lem:descent-if-can-move} again, $s\in D_L(y)$.

Case 2: $j\geq b$. The first appearance of $s$ in $x_1$ must be at index $j$, meaning that $s$ commutes with $s_{i_1},\ldots,s_{i_{a-1}}$ and $s_{i_{a+1}},\ldots,s_{i_{j-1}}$. The first appearance of $s$ in $x_2$ must be after index $a$, meaning that $s$ commutes with $s_{i_a}$ as well. Together, we see that $s$ commutes with all the $s_i$'s before index $j$ so $s\in D_L(y)$.
\end{proof}

We are now ready to provide a detailed analysis on the structures of butterflies in right-angled Coxeter groups. For $s,s'\in S$ that do not commute, define an element \[A^{(m)}(s,s')=ss'ss'\cdots\in W\] with $m$ copies of $s$ and $s'$ multiplied in an alternating way. 
\begin{lemma}\label{lem:butterfly-right-angle-structure}
Let $x_1,x_2\lessdot y_1,y_2$ form a butterfly in a right-angled Coxeter group $W$. Then we have the length-additive expressions: $x_1=u\cdot A^{(m)}(s,s')\cdot v$, $x_2=u\cdot A^{(m)}(s',s)\cdot v$, $\{y_1,y_2\}=\{u\cdot A^{(m+1)}(s,s')\cdot v,u\cdot A^{(m+1)}(s',s)\cdot v\}$ for some $m\geq1$, $u,v\in W$ and $s,s'\in S$ that do not commute.
\end{lemma}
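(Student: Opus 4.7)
The plan is to argue by strong induction on $\ell(y_1)+\ell(y_2)$. For the base case $\ell(y_1)=\ell(y_2)=2$, the elements $x_1,x_2$ must be distinct simple reflections $s,s'$, and the only length-two elements covering both of them are $ss'$ and $s's$, which are distinct if and only if $s,s'$ do not commute. This yields the conclusion with $u=v=e$ and $m=1$.

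For the inductive step I would fix a reduced word $y_1 = s_{j_1}\cdots s_{j_{\ell+1}}$; by the Subword Property, $x_1$ and $x_2$ arise from this word by deleting letters at some positions $a<b$. If $a>1$ then $s_{j_1}$ is a common left descent of $y_1,x_1,x_2$. The preceding lemma gives $D_L(x_1)\cap D_L(x_2) \subseteq D_L(y_1)\cap D_L(y_2)$, and the reverse inclusion follows from the Lifting Property (if $s \in (D_L(y_1)\cap D_L(y_2))\setminus D_L(x_i)$, then $sx_i = y_1 = y_2$, contradicting $y_1\neq y_2$); hence $s_{j_1}$ is in fact a common left descent of all four butterfly elements. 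Left multiplication by $s_{j_1}$ (Proposition~\ref{prop:right-left-multiplication-give-autos}) then carries the butterfly to the strictly smaller butterfly $s_{j_1}x_1,s_{j_1}x_2\lessdot s_{j_1}y_1,s_{j_1}y_2$. Applying the inductive hypothesis and multiplying back by $s_{j_1}$ on the left produces the required decomposition of the original butterfly: length-additivity of $s_{j_1}x_1 = u'\cdot A^{(m)}(\cdot,\cdot)\cdot v'$ together with $s_{j_1}\notin D_L(s_{j_1}x_1)$ forces $s_{j_1}\notin D_L(u')$, so $u := s_{j_1}u'$ gives a length-additive expression. The case $b<\ell+1$ is handled symmetrically via right multiplication, using the inversion-dual of the preceding lemma.

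The main obstacle is the remaining irreducible case $a=1$, $b=\ell+1$, which forces factorizations $y_1 = \sigma\cdot x_\alpha = x_\beta\cdot\sigma'$ with $\sigma := s_{j_1}$ and $\sigma' := s_{j_{\ell+1}}$ simple and $\{\alpha,\beta\}=\{1,2\}$, together with analogous factorizations $y_2 = \tau\cdot x_{\alpha'} = x_{\beta'}\cdot\tau'$. Since the common-descent equalities force $D_L(y_1)\cap D_L(y_2)=\emptyset$ and $D_R(y_1)\cap D_R(y_2)=\emptyset$, the role-assignment swaps between $y_1$ and $y_2$: $\alpha=\beta'$ and $\beta=\alpha'$. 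Combining the four identities yields the conjugation relation $\sigma\tau\cdot x_\alpha = x_\alpha\cdot\sigma'\tau'$. Using Lemma~\ref{lem:descent-if-can-move} together with the fact (Tits) that reduced words in a right-angled Coxeter group are unique up to commutations, one argues that $\sigma,\tau$ cannot commute (else a common descent reappears) and that $\sigma = \tau'$, $\tau = \sigma'$. This forces $x_\alpha,x_\beta$ to lie in the dihedral parabolic $\langle\sigma,\tau\rangle$ in the alternating form $A^{(m)}(\sigma,\tau),A^{(m)}(\tau,\sigma)$, with the analogous conclusion for $y_1,y_2$ using exponent $m+1$, completing the case with $u=v=e$.

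The hardest step is this final identification: ruling out that an irreducible butterfly could be supported on a non-parabolic infinite dihedral reflection subgroup is precisely where the right-angled hypothesis enters decisively, via Lemma~\ref{lem:descent-if-can-move} and Tits' uniqueness-up-to-commutation.
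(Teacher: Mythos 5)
Your proposal follows the same overall strategy as the paper's proof: strip off common left and right descents, then analyze the remaining ``irreducible'' case from the reduced-word structure of $y_1$ and $y_2$. The base case and the reduction step are handled correctly (though note that checking $a>1$ or $b<\ell+1$ for one chosen reduced word of $y_1$ does not a priori certify that $x_1,x_2$ have no common descents; the paper reduces by the descent criterion directly, which is cleaner). The trouble lies in the irreducible case, and it occurs in two places.

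First, the asserted identities $\sigma = \tau'$ and $\tau = \sigma'$ are false when the alternation length $m$ is even. For example, for non-commuting $s,s'$ take the butterfly $x_1 = ss'$, $x_2 = s's$, $y_1 = ss's$, $y_2 = s'ss'$; here $\sigma = \sigma' = s$ and $\tau = \tau' = s'$, so $\sigma \neq \tau'$. Second, and more seriously, the claim that the conjugation relation $\sigma\tau\, x_\alpha = x_\alpha\, \sigma'\tau'$ (together with $\sigma,\tau$ non-commuting) ``forces'' $x_\alpha, x_\beta$ to lie in $\langle\sigma,\tau\rangle$ in alternating form is not justified, and does not follow from what you wrote. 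The normalizer of $\langle\sigma\tau\rangle$ in a right-angled group can contain elements far outside the dihedral parabolic (for instance any simple reflection commuting with both $\sigma$ and $\tau$ can be prepended to an alternating word without disturbing the conjugation identity); ruling this out is exactly the content of the hard step. The paper does not argue via a conjugation identity at all: it runs an explicit inner induction on the position $p = 1,\ldots,k$, showing at each step that $s_{i_p}\cdots s_{i_k}$ and $s'_{i_p}\cdots s'_{i_k}$ each have a unique left descent and that $s_{i_p}, s'_{i_p}$ take the alternating values $s, s'$; this is where Lemma~\ref{lem:descent-if-can-move} is actually deployed. You have correctly identified which tools are needed, but the phrase ``one argues that $\ldots$ This forces $\ldots$'' is precisely where the proof has to be carried out, and your sketch does not do so.
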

\begin{proof}
Use induction on $\ell(x_1)$. If $x_1$ and $x_2$ have a common left descent $s$, then all these four elements have the same left descent $s$ and we can instead consider the butterfly $sx_1,sx_2\lessdot sy_1,sy_2$. Thus, assume that $x_1$ and $x_2$ do not have any common left descents, and similarly do not have any common right descents.

Choose a reduced word $y_1=s_{i_1}s_{i_2}\cdots s_{i_k}$ and by the Subword Property, let $x_2$ be obtained from $y_1$ by deleting $s_{i_a}$ and $x_1$ be obtained from $y_1$ by deleting $s_{i_b}$ with $a<b$. We must have $a=1$, since otherwise $s_{i_1}$ is a common descent of $x_1$ and $x_2$. Similarly $b=k$. Moreover, $s_{i_1}$ must be the unique descent of $x_1$ since any other potential descent $s_{i_c}$ will be a descent of $x_2$ by Lemma~\ref{lem:descent-if-can-move}. Similarly, write $y_2=s_{i_1}'s_{i_2}'\cdots s_{i_k}'$ then we analogously have $x_1,x_2\in\{s_{i_1}'\cdots s_{i_{k-1}}', s_{i_2}'\cdots s_{i_{k}}'\}$. If $x_1=s_{i_1}'\cdots s_{i_{k-1}}'$, then $s_{i_1}'=s_{i_1}$ since $x_1$ has a unique left descent, which means $y_2=s_{i_1}'x_2=s_{i_1}x_2=y_1$, a contradiction. Thus, we have
\[\begin{cases}
x_1=&s_{i_1}s_{i_2}\cdots s_{i_{k-1}}=s_{i_2}'s_{i_3}'\cdots s_{i_k}'\\
x_2=&s_{i_2}s_{i_3}\cdots s_{i_k}=s_{i_{1}}'s_{i_2}'\cdots s_{i_{k-1}}'
\end{cases}\]
and each one of $x_1$ and $x_2$ has a unique left descent and a unique right descents. 

Let $s=s_{i_1}$ and $s'=s_{i_1}'$, which are different. We now use induction on $p=1,\ldots,k$ to show that: $s_{i_p}s_{i_{p+1}}\cdots s_{i_k}$ has a single left descent at $s_{i_p}$, $s_{i_p}'s_{i_{p+1}}'\cdots s_{i_k}'$ has a single left descent at $s_{i_p}'$ and that $s_{i_p}=s$, $s_{i_p}'=s'$ if $p$ is odd, $s_{i_p}=s'$, $s_{i_p}'=s$ if $p$ is even. As for the base case $p=1$, we need to show that $y_1$ has a single left descent at $s$. Note that we already know that $x_1$ has a single left descent at $s$ so the possibility that $y_1=x_1s_{i_k}$ has another left descent is that $s_{i_k}$ commutes with $x_1$, which is impossible as we also know that $s_{i_k}$ cannot get pass $s_{i_{k-1}}$. As a result, $y_1$ has a single left descent at $s$ and analogously $y_2$ has a single left descent at $s'$.

For the inductive step, assume the claims are true for $p-1$. By the induction hypothesis, $s_{i_j}=s_{i_{j+1}}'$ and $s_{i_j}'=s_{i_{j+1}}$ for $j\leq p-2$. This means that we have $s_{i_{p-1}}\cdots s_{i_{k-1}}=s_{i_p}'\cdots s_{i_{k}}'$. By the induction hypothesis, $s_{i_{p-1}}\cdots s_{i_{k}}$ has a single left descent so $s_{i_{p-1}}\cdots s_{i_{k-1}}$ has a single left descent at $s_{i_{p-1}}$, which must equal $s_{i_p}'$ because of this equality. Similarly, $s_{i_p}\cdots s_{i_{k}}$ has a single left descent at $s_{i_{p-1}}'$ which also gives $s_{i_p}=s_{i_{p-1}}'$. Thus, both the single descent statement and the exact values of $s_{i_p},s_{i_p}'$ go through.

As a result, we see that $x_1=A^{(k-1)}(s,s')$, $x_2=A^{(k-1)}(s',s)$, while $y_1=A^{(k)}(s,s')$, $y_2=A^{(k)}(s',s)$. So we are done.
\end{proof}

The following lemma is then straightforward.
\begin{lemma}\label{lem:right-angle-descent-down}
Let $x_1,x_2\lessdot y_1,y_2$ form a butterfly in a right-angled Coxeter group $W$. If $q\in D_L(y_1)\cap D_L(y_2)$, then $q\in D_L(x_1)\cap D_L(x_2)$.
\end{lemma}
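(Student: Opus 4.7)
The plan is to leverage the structural form provided by Lemma~\ref{lem:butterfly-right-angle-structure}: up to swapping $y_1 \leftrightarrow y_2$, one has length-additive reduced expressions
\[
x_1 = u \cdot A^{(m)}(s,s') \cdot v, \qquad x_2 = u \cdot A^{(m)}(s',s) \cdot v,
\]
\[
y_1 = u \cdot A^{(m+1)}(s,s') \cdot v, \qquad y_2 = u \cdot A^{(m+1)}(s',s) \cdot v,
\]
for some non-commuting $s, s' \in S$ and some $u, v \in W$. With these explicit reduced words in hand, the natural tool is Lemma~\ref{lem:descent-if-can-move}: a simple reflection $q$ is a left descent of an element exactly when its first occurrence in a (any) reduced word commutes past everything to its left. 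So the whole argument amounts to locating the first occurrence of a given $q \in D_L(y_1) \cap D_L(y_2)$ inside the displayed words and transferring that information to the reduced words for $x_1$ and $x_2$.

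I would split on whether $q \in \supp(u)$. If $q \in \supp(u)$, the first occurrence of $q$ in the reduced word for $y_1$ lies inside the common $u$-prefix, and Lemma~\ref{lem:descent-if-can-move} says it commutes with all letters to its left. Since the reduced words for $x_1$ and $x_2$ begin with the same $u$-prefix, the same lemma immediately gives $q \in D_L(x_1) \cap D_L(x_2)$. If $q \notin \supp(u)$, I would first rule out $q \in \{s, s'\}$: for $q = s$, the first occurrence of $s$ in the reduced word $u \cdot A^{(m+1)}(s',s) \cdot v$ for $y_2$ is the second letter of the alternating block, which would have to commute with the preceding $s'$, contradicting $ss' \neq s's$; the case $q = s'$ is symmetric using $y_1$. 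In the remaining case $q \notin \{s,s'\} \cup \supp(u)$, the descent hypothesis on $y_1$ forces $q$ to commute with every letter of $u$, with both $s$ and $s'$, and with the prefix of $v$ preceding the first occurrence of $q$ in $v$. These are precisely the commutations that allow $q$ to be moved to the front of $x_1 = u \cdot A^{(m)}(s,s') \cdot v$ and of $x_2 = u \cdot A^{(m)}(s',s) \cdot v$, giving $q \in D_L(x_1) \cap D_L(x_2)$ by one more application of Lemma~\ref{lem:descent-if-can-move}.

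The main obstacle is really just careful bookkeeping; there is no conceptual difficulty once Lemma~\ref{lem:butterfly-right-angle-structure} pins down the reduced-word shapes. The one subtle point is the sub-case $q \in \{s,s'\}$ with $q \notin \supp(u)$, where the alternation of $s$ and $s'$ in $A^{(m+1)}$ is exactly what produces the needed contradiction: applying the descent test to the \emph{other} one of $y_1, y_2$ is what rules the case out, so it is essential that the hypothesis $q \in D_L(y_1) \cap D_L(y_2)$ is used symmetrically in both letters of the alternating block.
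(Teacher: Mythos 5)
Your proof is correct and follows essentially the same route as the paper: apply Lemma~\ref{lem:butterfly-right-angle-structure} to get the explicit $u\cdot A^{(\cdot)}(s,s')\cdot v$ reduced-word shapes, then use Lemma~\ref{lem:descent-if-can-move} to track the first occurrence of $q$ and split into cases (first occurrence in $u$, inside the alternating block, or in $v$). Your organization of the cases — by whether $q\in\supp(u)$, then whether $q\in\{s,s'\}$ — is a minor repackaging of the paper's split by the location of the first occurrence of $q$, and the key step of playing $y_1$ against $y_2$ to rule out $q\in\{s,s'\}$ is exactly the paper's argument.
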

\begin{proof}
Write $x_1,x_2,y_1,y_2$ in the form as in Lemma~\ref{lem:butterfly-right-angle-structure}. Pick any reduced word of $y_1$ compatible with the decomposition $y_1=u\cdot A^{(m+1)}(s,s')\cdot v$. If $q\in D_L(u)$, then we clearly have $q\in D_L(x_1)\cap D_L(x_2)$. Similarly, if the first appearance of $q$ is inside $v$, meaning that $q$ commutes with the simple generators before it in $v$, and with $s$ and $s'$, and with $u$, by Lemma~\ref{lem:descent-if-can-move}, then we have $q\in D_L(x_1)$ and $q\in D_L(x_2)$ as well. Lastly, if the first appearance of $q$ in $y_1$ is inside $A^{(m+1)}(s,s')$, meaning that $q=s$ and $s$ does not appear in $u$, then $q$ cannot be a left descent of $y_2$ by Lemma~\ref{lem:descent-if-can-move}, a contradiction.
\end{proof}

It is now clear that for a butterfly $x_1,x_2\lessdot y_1,y_2$ in a right-angled Coxeter group, $y_1$ and $y_2$ have (at least, and in fact) two upper covers which are $u\cdot A^{(m+2)}(s,s')\cdot v$ and $u\cdot A^{(m+2)}(s',s)\cdot v$ with notations as in Lemma~\ref{lem:butterfly-right-angle-structure}. We are now ready to prove the main theorem of this section, which resolves the right-angled case in Lemma~\ref{lem:butterfly}.

\begin{lemma}\label{lem:butterfly-right-angle-join}
Let $x_1,x_2\lessdot y_1,y_2$ form a butterfly in a right-angled Coxeter group $W$. If $w\geq y_1,y_2$, then there exists some $z$ which cover both $y_1$ and $y_2$ such that $w\geq z$.
\end{lemma}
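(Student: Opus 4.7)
My plan is to induct on $\ell(w)$. Using Lemma~\ref{lem:butterfly-right-angle-structure} I put the butterfly in the standard form $y_1 = u \cdot A^{(m+1)}(s,s') \cdot v$, $y_2 = u \cdot A^{(m+1)}(s',s) \cdot v$; as observed in the paragraph just before the statement, the common upper covers of $y_1, y_2$ are exactly $z_1 = u \cdot A^{(m+2)}(s,s') \cdot v$ and $z_2 = u \cdot A^{(m+2)}(s',s) \cdot v$. The base case $\ell(w) = \ell(y_1)+1$ is immediate, since then $w$ is itself a common upper cover of $y_1, y_2$ and so equals $z_1$ or $z_2$.

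For the inductive step, pick any $p \in D_L(w)$ and split on how $p$ meets $D_L(y_1), D_L(y_2)$. If $p \notin D_L(y_1) \cup D_L(y_2)$, two applications of the Lifting Property give $y_1, y_2 \leq pw$, and since $\ell(pw) < \ell(w)$ the inductive hypothesis supplies some $z_i$ with $z_i \leq pw \leq w$. If instead $p$ lies in exactly one of $D_L(y_1), D_L(y_2)$, say $p \in D_L(y_1) \setminus D_L(y_2)$, then the subcase analysis in the proof of Lemma~\ref{lem:right-angle-descent-down} (via Lemma~\ref{lem:descent-if-can-move}) forces $p = s$ with $s$ commuting through $u$. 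The identity $s \cdot A^{(m+1)}(s',s) = A^{(m+2)}(s,s')$ then gives $p y_2 = u \cdot A^{(m+2)}(s,s') \cdot v = z_1$, and Lifting applied to $y_2 \leq w$ yields $z_1 = p y_2 \leq w$ (the case $p \in D_L(y_2) \setminus D_L(y_1)$ is symmetric and produces $z_2$).

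The remaining and most involved case is $p \in D_L(y_1) \cap D_L(y_2)$. Lemma~\ref{lem:right-angle-descent-down} gives $p \in D_L(x_1) \cap D_L(x_2)$, and an analogous three-subcase commutation-move analysis (first appearance of $p$ in $u$, in the alternating middle, or in $v$) rules out the middle-appearance subcase for a \emph{common} descent of $y_1, y_2$, thereby showing $p \in D_L(z_1) \cap D_L(z_2)$ as well. Consequently $(px_1, px_2, py_1, py_2)$ is again a butterfly in standard form---with the same $s, s', m$ but with $u$ replaced by $pu$ or $v$ by $pv$, depending on where $p$'s first appearance lies---and its two common upper covers are $pz_1, pz_2$. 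Since $p$ is a common left descent of $w, y_1, y_2$, we have $pw \geq py_1, py_2$, so induction applied to this smaller butterfly produces $pw \geq pz_i$ for some $i$. Because $p \notin D_L(pw) \cup D_L(pz_i)$, the standard monotonicity ``$a \leq b$ with $p \notin D_L(a) \cup D_L(b)$ implies $pa \leq pb$'' finally yields $w = p \cdot pw \geq p \cdot pz_i = z_i$.

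The main obstacle is the bookkeeping in this last case: simultaneously verifying that $p$ passes up to be a common left descent of $z_1, z_2$, that the reduced quadruple $(px_1, px_2, py_1, py_2)$ remains a butterfly in the standard form with updated parameters, and that its common upper covers are precisely $pz_1, pz_2$. All three of these reduce to the same commutation-move calculus already developed in Lemmas~\ref{lem:descent-if-can-move}, \ref{lem:butterfly-right-angle-structure}, and \ref{lem:right-angle-descent-down}, so the proof is principally careful casework built on top of the structural results already established in this section.
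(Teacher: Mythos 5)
Your proof is correct and follows essentially the same route as the paper's: reduce to the standard form of Lemma~\ref{lem:butterfly-right-angle-structure}, split on how a left descent $p$ of $w$ interacts with $D_L(y_1)$ and $D_L(y_2)$, dispatch the ``exactly one'' case via the $p=s$ commutation analysis and a single Lifting, and handle the remaining cases by induction. There are two cosmetic differences worth noting. First, you run a single induction on $\ell(w)$, whereas the paper runs a nested induction on $\ell(x_1)$ and then on $\ell(w)$; both are valid because in the common-descent case $\ell(x_1)$ and $\ell(w)$ drop simultaneously, so a single measure on $\ell(w)$ already terminates. Second, in the common-descent case the paper applies the inductive hypothesis to the butterfly $(qx_1,qx_2,qy_1,qy_2)$ to get some $z' \gtrdot qy_1,qy_2$ with $qw \geq z'$ and then shows $z = qz'$ covers $y_1,y_2$ and lies below $w$; you instead first identify the common upper covers of $(py_1,py_2)$ as exactly $pz_1,pz_2$, apply induction, and then lift back by $p$. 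These are two phrasings of the same argument: your identification of the covers of the reduced butterfly follows from observing that any common upper cover $z'$ of $py_1,py_2$ has $p \notin D_L(z')$ (otherwise the Lifting Property would force $z' = y_1 = y_2$), and then lifting $py_i \lessdot z'$ by $p$ gives $y_i \lessdot pz'$, so $pz' \in \{z_1,z_2\}$. In short: same proof, with the bookkeeping rearranged.
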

\begin{proof}
Use induction on $\ell(x_1)$, and on top of that, use induction on $\ell(w)$. Write $x_1=u\cdot A^{(m)}(s,s')\cdot v$, $x_2=u\cdot A^{(m)}(s',s)\cdot v$, $y_1=u\cdot A^{(m+1)}(s,s')\cdot v$, $y_2=u\cdot A^{(m+1)}(s',s)\cdot v$ as in Lemma~\ref{lem:butterfly-right-angle-structure}.

Take any left descent $q\in D_L(w)$. If $q\in D_L(y_1)\cap D_L(y_2)$, by Lemma~\ref{lem:right-angle-descent-down}, $q\in D_L(x_1)\cap D_L(x_2)$. This means that we can consider the butterfly $qx_1,qx_2\lessdot qy_1,qy_2$, with $qw\geq qy_1,qy_2$, via either the Subword Property or the Lifting Property of the strong order. By the induction hypothesis, there exists $z'\gtrdot qy_1,qy_2$ such that $qw\geq z'$. Now, $z=qz'$ is what we want. Similarly, if $q\notin D_L(y_1),D_L(y_2)$, then $qw\geq y_1,y_2$ so by the induction hypothesis on $\ell(w)$, we have $w\geq qw\geq z\gtrdot y_1,y_2$ as desired.

For the critical case, assume $q\in D_L(y_1)$ and $q\notin D_L(y_2)$. Pick any reduced reduced word of $y_1$ compatible with the decomposition $y_1=u\cdot A^{(m+1)}(s,s')\cdot v$. If the first (leftmost) appearance of $q$ is in $u$, then $q\in D_L(y_2)$, a contradiction. If the first appearance of $q$ is in the part of $v$, then $q$ can be moved all the way past $s$ and $s'$ and $u$, meaning that $q\in D_L(y_2)$, a contradiction. Thus, the first appearance of $q$ in $y_1$ is in the part of $A^{(m+1)}(s,s')$. This means that $q=s$, and that $s$ commutes with $u$. Since $w\geq y_2$, $q\in D_L(w)$, $q\notin D_L(y_2)$, the Lifting Property says that $w\geq qy_2$. At the same time, \[qy_2=s\cdot u\cdot A^{(m+1)}(s',s)\cdot v=u\cdot A^{(m+2)}(s,s')\cdot v.\]
Let $z=qy_2$ and we see that $w\geq z\gtrdot y_1,y_2$ as desired.
\end{proof}

\section*{Acknowledgements}
We are very grateful to Thomas Lam and Grant Barkley for their helpful comments and suggestions. We also wish to thank Mario Marietti for alerting us to important references.

\bibliographystyle{plain}
\bibliography{main.bib}
\end{document}